\documentclass[12pt,a4paper]{amsart}

\usepackage{amsmath}
\usepackage{amssymb}
\usepackage{amsfonts}
\usepackage{amsthm}
\usepackage[all]{xy}
\usepackage{tikz}
\usepackage{hyperref}
\usepackage[shortlabels]{enumitem}
\usepackage{multicol}
\usepackage[utf8]{inputenc}
\usepackage[polish,english]{babel}

\theoremstyle{plain}
\newtheorem*{atw*}{Theorem}
\newtheorem{atw}{Theorem}[section]
\newtheorem*{alemat*}{Lemma}
\newtheorem{alemat}[atw]{Lemma}
\newtheorem{pro}[atw]{Proposition}
\newtheorem*{pro*}{Proposition}
\newtheorem{cor}[atw]{Corollary}
\newtheorem*{cor*}{Corollary}

\newtheorem*{fact*}{Fact}
\theoremstyle{definition}
\newtheorem{adf}[atw]{Definition}
\newtheorem*{adf*}{Definition}
\theoremstyle{remark}
\newtheorem{rem}[atw]{Remark}
\newtheorem*{rem*}{Remark}
\newtheorem{ex}[atw]{Example}
\newtheorem*{ex*}{Example}

\newtheorem*{exer*}{Exercise}

\newcommand{\Z}{\mathbb{Z}}
\newcommand{\N}{\mathbb{N}}
\newcommand{\Q}{\mathbb{Q}}
\newcommand{\R}{\mathbb{R}}
\newcommand{\C}{\mathbb{C}}

\newcommand{\T}{\mathbb{T}}
\newcommand{\PP}{\mathbb{P}}

\newcommand{\ee}{{\bf e}}
\newcommand{\vv}{{\bf v}}

\newcommand{\ttt}{{\bf t}}

\newcommand{\xto}[1]{{\xrightarrow{#1}}}

\newcommand{\onto}{\twoheadrightarrow}

\newcommand{\Hom}{\operatorname{Hom}}
\newcommand{\codim}{\operatorname{codim}}
\newcommand{\coker}{\operatorname{coker}}
\newcommand{\lin}{\operatorname{span}}
\newcommand{\rank}{\operatorname{rank}}
\newcommand{\diag}{\operatorname{diag}}
\newcommand{\supp}{\operatorname{supp}}

\setlength{\textheight}{24cm}
\setlength{\textwidth}{15.92cm}
\setlength{\footskip}{10mm}
\setlength{\oddsidemargin}{0mm}
\setlength{\evensidemargin}{0mm}
\setlength{\topmargin}{0mm}
\setlength{\headsep}{5mm}

\title[Comparison of motivic Chern classes and stable envelopes]{Comparison of motivic Chern classes and stable envelopes for cotangent bundles}
\author{Jakub Koncki}
\address{Institute of Mathematics, University of Warsaw, Poland}
\email{j.koncki@mimuw.edu.pl}

\def\red#1{#1}
\def\old#1{}

\begin{document}
	
\begin{abstract}
	We consider \red{a} complex smooth projective variety equipped with an action of \red{an} algebraic torus with \red{a} finite number
	of fixed points. We compare the motivic Chern classes of Białynicki-Birula cells \red{with} the $K$-theoretic stable envelopes of cotangent bundle. We prove that under certain geometric assumptions satisfied \red{e.g.} by homogenous spaces these two notions coincide up to normalization.
\end{abstract}
\maketitle		
\section{Introduction}
	\red{A} torus action on a smooth quasiprojective complex variety induces many cohomological structures. The Białynicki-Birula decomposition and the localization theorems are the \red{most} widely known examples. In this paper we aim to compare two \red{families of} $K$-theoretic characteristic classes induced by the Białynicki-Birula decomposition: the equivariant motivic Chern classes of Białynicki-Birula cells and the stable envelopes of cotangent \red{bundles}. For simplicity\red{,} we 
	\old{use shortcuts} \red{write} BB-decomposition and BB-cells for Białynicki-Birula decomposition and Białynicki-Birula cells\red{,} respectively.
	
	 Stable envelopes are characteristic classes defined for symplectic varieties equipped with torus \red{actions}. They are important objects in \old{the} modern geometric representation theory (cf. \cite{Op} for a survey).
	 Stable envelopes occur in three versions: cohomological \cite{OM}, $K$-theoretic \cite{OS,O2} and elliptic \cite{OA}. In this paper we focus on the $K$-theoretic ones. They depend on a choice of \red{a} linearisable line bundle called slope. Their axioms define \red{a} unique class
	 for general enough slope, yet \red{the} existence of \red{elements}
	 satisfying \red{the} axioms is still unknown in many cases.
	 
	 The motivic Chern class is an offshoot of the program of generalising characteristic classes of tangent \red{bundles} 
	 to \red{the} singular case.
	 It began with the construction of the Chern-Schwartz-MacPherson class in \cite{CSM} and was widely developed
	 (\red{see} e.g.	\cite{Oh,BSY,CMOSSY}
	 and \cite{SYp} for a survey). The common point of many of \old{defined} characteristic classes \red{that have been defined is} additivity properties with respect to \red{the} decomposition of a variety \red{as a union of} closed and open \red{subvarieties}. For example the non-equivariant motivic Chern class $mC_\red{y}$ (cf. \cite{BSY}) assigns to every map of varieties $\red{f:}X \to M$ a polynomial over \red{the} $K$-theory of coherent sheaves of $M$: an element \hbox{$\red{mC_y(X\xto{f}M)}\in G(M)[y]$.} Its additivity property states that:
	 $$mC_\red{y}(X\xto{f} M)=mC_\red{y}(Z\xto{f_{|Z}} M)+mC_\red{y}(X\setminus Z\xto{\red{f_{|X\setminus Z}}} M)\,,$$
	 for every closed subvariety $Z \subset X$. Similar properties are satisfied by the Chern-Schwartz-MacPherson class and the Hirzebruch class.
	 
	 Lately\red{,} equivariant versions of many such classes \red{have been} defined (e.g. \cite{Oh2, WeHir,FRW,AMSS}).
	 For an algebraic torus $\T \simeq \C^r$ the $\T$-equivariant motivic Chern class (cf. \cite{FRW,AMSS}) assigns to every $\T$-equivariant map of varieties $f: X \to M$ a polynomial over \red{the} $K$-theory of $\T$-equivariant coherent sheaves of $M$: an element $\red{mC^\T_y(X\xto{f}M)}\in G^\T(M)[y]$. It is  uniquely defined by \red{the following} three properties (after \cite{FRW}, section~2.3):
	 \begin{description}
	 	\item[1. Additivity] If \red{a $\T$-variety $X$ decomposes as a union of closed and open invariant subvarieties} $X=Y\sqcup U$, then $$mC_{\red{y}}^\T(X\xto{\red{f}} M)=mC_{\red{y}}^\T(Y\xto{\red{f_{|Y}}} M)+mC_{\red{y}}^\T(U\xto{\red{f_{|U}}} M)\,.$$
	 	
	 	\item[2. Functoriality] For \red{an equivariant} proper map $f:M\to M'$ we have $$mC_{\red{y}}^\T(X\stackrel{f\circ g}\to M')=f_*mC_{\red{y}}^\T(X\stackrel{g}\to M)\,.$$
	 	
	 	\item[3. Normalization] For a smooth \red{$\T$-}variety $M$ we have $$mC_{\red{y}}^\T(id_M)=\lambda_y(T^*M):=\sum_{i=0}^{\rank T^*M}[\Lambda^iT^*M]y^i \,.$$
	 	
	 \end{description}
 	In many cases\red{,} one can directly compute this class using the Lefschetz-Riemann-Roch theorem (cf. \cite{ChGi} theorem 5.11.7) and \red{the} above properties. For examples of \red{computations}
 	 see \cite{Feh,FRW,Kon}. In the paper \cite{FRW} (see also \cite{FRWp}) it was found that for $G$-equivariant varieties with \red{a} finite number
 	 of orbits\red{,} the motivic Chern classes $mC_{\red{y}}^G$ of $G$-orbits satisfy axioms similar to those of the stable envelopes.
 	
 	There is one more family of characteristic classes, called the weight functions,
 	\red{closely} connected \red{to the} characteristic classes mentioned above.
 	Their relations with other characteristic classes  were widely studied e.g. \cite{RTV',FRW,RW,KRW,RTV}.
 	
 	In this paper we consider a smooth projective variety $M$ equipped with an action of a torus $A$. Suppose that the fixed point set $M^A$ is finite. Our main result states that under some geometric assumptions on $M$
 	the stable envelopes for \red{a} small enough anti-ample slope and the motivic Chern classes coincide up to normalization. Namely:
 	\begin{atw*}
 		Let $M$ be as above. Consider the cotangent variety $X=T^*M$ with the action of the torus $\T=\C^*\times A$ (where the \red{first factor} $\C^*$ acts on fibers by scalar multiplication). Choose any weight chamber $\mathfrak{C}$ of the torus $A$ and polarization $T^{1/2}:=TM$. Suppose that the variety $M$ satisfies the local product condition
 		(see definition \ref{df:prd}). For any anti-ample $A$-linearisable line bundle $s$ and \red{a} sufficiently big integer $n$\red{,} the \red{element}
 		$$
 		\frac{mC_{-y}^A(M^+_F \to M)}{y^{\dim M_F^+}} \in K^A(M)[y,y^{-1}] \simeq K^\T(M) \simeq K^\T(T^*M)
 		$$
 		\old{determine} \red{is equal to} the $K$-theoretic stable envelope $y^{-\frac{1}{2}\dim M_F^+}Stab^{\frac{s}{n}}_{\mathfrak{C},T^{1/2}}(\red{1_F})$.
 	\end{atw*}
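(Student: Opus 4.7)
The plan is to exploit the uniqueness theorem for $K$-theoretic stable envelopes: for a sufficiently generic (in particular, anti-ample and small) slope, the class $Stab^{s/n}_{\mathfrak{C},T^{1/2}}(1_F)$ is the unique element of $K^\T(T^*M)$ satisfying three axioms---a support condition on the attracting set $\overline{M^+_F}$, a diagonal normalization at $F$, and a Newton polytope (slope) condition controlling the restrictions to other fixed points. Rather than constructing stable envelopes from scratch, I would verify that the candidate $y^{\frac{1}{2}\dim M^+_F}\,\frac{mC_{-y}^A(M^+_F\to M)}{y^{\dim M^+_F}}$, viewed inside $K^\T(T^*M)$ via the chain of isomorphisms $K^A(M)[y,y^{-1}]\simeq K^\T(M)\simeq K^\T(T^*M)$ (where $y$ encodes the $\C^*$-weight on the fibers of $T^*M$), satisfies each of these three axioms.

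The support axiom is immediate from the additivity of $mC_{-y}^A$: writing $M$ as the disjoint union of BB-cells and applying additivity shows that $mC_{-y}^A(M^+_F\to M)$ is pushed forward from $\overline{M^+_F}$, hence its support is contained in the attracting closure as required. For the diagonal normalization, I would compute the restriction to $F$ using the functoriality and normalization axioms of $mC^A_{-y}$ together with the Lefschetz--Riemann--Roch theorem; the key input is that near $F$, the BB-cell $M^+_F$ looks like an affine space with known $A$-weights (the positive weights of $T_FM$ with respect to $\mathfrak{C}$). The contribution at $F$ then becomes a product of $\lambda_{-y}$-factors in these weights, and the explicit power $y^{\frac{1}{2}\dim M^+_F}$ converts this into the form $\prod(1 - y\cdot w^{-1})$ expected for the polarization $T^{1/2}=TM$---matching the stable envelope normalization verbatim.

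The main obstacle, and the reason the local product condition enters, is the Newton polytope axiom at a different fixed point $F'\in \overline{M^+_F}\setminus\{F\}$. This requires showing that for all weights $\mu$ appearing in the restriction $mC_{-y}^A(M^+_F\to M)|_{F'}$, the shift $\mu + \text{(a correction involving }T^{1/2}\text{)}$ lies in a prescribed half-space determined by the slope $s/n$. My plan is to use the local product condition to choose, in a neighborhood of $F'$, a decomposition $M \simeq M'\times M''$ of $A$-varieties such that the BB-cell $M^+_F$ factors as a product $M^+_F\cap M' \times M^+_F\cap M''$ of cells of the factors (or similar). This reduces the Newton polytope estimate to the case where $M^+_F$ is locally the smooth attracting cell of a fixed point inside a product structure, where the restriction $mC_{-y}^A(M^+_F\to M)|_{F'}$ can be computed explicitly via the normalization axiom, yielding an expression whose weights one can bound directly. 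The anti-ampleness of $s$ and the scaling by $1/n$ with $n$ large provide enough slack so that the bound obtained fits inside the Newton polytope region dictated by $s/n$.

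Finally, once all three axioms are verified, uniqueness of the stable envelope for the slope $s/n$---valid precisely because anti-ampleness and large $n$ put $s/n$ in the regular chamber---forces the equality claimed. The proof therefore reduces the geometric comparison to two tasks of different flavour: a routine fixed-point computation for the normalization, and a careful use of the local product condition for the slope condition; the latter is where I expect the bulk of the technical work.
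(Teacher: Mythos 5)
Your plan misattributes the role of the local product condition, and in doing so you dismiss as ``immediate'' the step that in fact requires the most work.

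You claim the support axiom follows directly from additivity: $mC_{-y}^A(M^+_F\to M)$ is supported on $\overline{M^+_F}\subset M$, hence (you say) on the attracting closure in $X=T^*M$. But the attracting set of $F$ inside $X=T^*M$ is the conormal bundle $\nu^*(M^+_F\subset M)$, not the restriction $T^*M|_{\overline{M^+_F}}$. Additivity only gives you support in $T^*M|_{\bigcup_{F'\le F}M^+_{F'}}$, which is a strictly larger set; refining this to $\bigcup_{F'\le F}\nu^*(M^+_{F'}\subset M)$ is precisely where the local product condition enters, via a divisibility criterion (the paper's lemma \ref{lem:supp}: for each $F_i$, the restriction $a|_{F_i}$ must be divisible by $\lambda_{-y}(T^*M^+_{F_i})|_{F_i}$). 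Without this, the support axiom is simply not verified. Conversely, the Newton polytope axiom for the \emph{trivial} slope is verified in proposition \ref{pro:mC} without any local product hypothesis at all (the paper notes this explicitly); it reduces to two facts about $N^A(mC_y^A(M_F^+\to M)|_{F'})$ already established in \cite{FRW} (polytope containment in $N^A(eu(\tilde\nu_{F'}))$, and the vertex $-\det T^{1/2}_{F',>0}$ being excluded because a Hirzebruch $\chi_y$-genus limit vanishes).

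Your treatment of the slope $s/n$ is also too optimistic. Passing from the trivial slope to a small anti-ample slope is not a matter of ``enough slack'': the paper has to single out each facet $\tau$ of $N^A(eu(\tilde\nu^-_{F'}))$, consider the associated one-dimensional subtorus $\sigma_{H_\tau}$, and show that either the motivic Chern polytope misses the facet (detected by a limit computation $\lim_{\mathbf t\to 0}$ using \cite{Kon}), or the vector $s_F-s_{F'}$ points inward. Anti-ampleness enters through a concrete geometric argument: one connects $F$ to $F'$ by a chain of $\sigma_{H_\tau}$-invariant $\PP^1$'s and compares weights of $s$ at attracting versus repelling fixed points on each $\PP^1$. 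None of this is captured by the heuristic in your third paragraph, and the generic-chamber uniqueness you invoke at the end is not what the paper uses either; the paper strengthens the Newton-polytope axiom (removing the point $0$) precisely so that uniqueness holds for \emph{all} slopes, not just generic ones.
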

 	 \red{A} similar comparison was done in \cite{FR,RV,AMSS0} for the Chern-Schwartz-MacPherson class in the cohomological setting. The above theorem is a generalisation of the previous results of \red{\cite{AMSS,FRW}}
 	 where analogous equality
 	 is proved for the flag varieties $G/B$. \red{\cite{AMSS}} approach is based on \red{the} study of the Hecke algebra action on $K$-theory of flag variety, whereas our strategy is \red{similar to \cite{FRW}}. First\red{,} we make \red{a change} \old{correction}
 	 in \red{the} definition
 	 of the stable envelope such that it coincides with\old{the} Okounkov's\old{one} for general enough slope and is unique for all slopes (section \ref{s:env} and appendix \red{\hyperref[s:Ok]{A}}). By direct check of axioms\red{,} we prove that the equality from the theorem holds for the trivial slope (section \ref{s:mC}). \red{Then,} we check that the stable envelopes for \red{the} trivial slope and \red{a} small anti-ample slope coincide (section \ref{s:slope}).
 	 \red{Finally}, in \old{the} appendix \red{\hyperref[s:G/P]{B}} we check that \red{the} homogenous varieties $G/P$ satisfy \red{the} local product condition
 	 mentioned in the theorem.
 	 
 	 Our main technical tool is \red{the} study of
 	limits of Laurent polynomials (of one or many variables).
 	The limit technique was investigated \old{both} for motivic Chern classes \cite{WeBB,FRW,Kon} as well as for stable envelopes \cite{SZZ,O2}.
 	We use it mainly to prove various containments of Newton polytopes.
 	
 	Homogenous varieties $G/P$ are our main examples of varieties satisfying the local product condition. The study of characteristic classes and stable envelopes of such varieties is \red{an} important theme
 	present in recent research (e.g. \cite{AM,SZZ2,RV,RSVZ}).
 	A priori the stable envelope is defined for symplectic varieties which admit a proper map to an affine variety. This condition is satisfied by the cotangent bundles to  flag varieties of any reductive group and all homogenous varieties of $GL_n$.
 	There is \red{a} weaker condition
 	(see section \ref{s:env} condition ($\star$)) which is sufficient to define stable envelope and holds for  cotangent bundle to any variety which \red{satisfies}
 	the local product condition.
 	
 	\subsection{Acknowledgements}
 	I would like to thank \old{to} Andrzej Weber for his guidance and support.
 	I am grateful to Agnieszka Bojanowska for her valuable remarks.
 	I thank anonymous referees for their helpful comments. 
 	The author was supported by  the research project of the Polish National Research Center 2016/23/G/ST1/04282 (Beethoven 2, German-Polish joint project).

\section{Tools}
	This section gathers technical results, useful in the further parts of \red{this} paper. All considered varieties are assumed to be complex and quasiprojective.

\subsection{Equivariant K-theory}
	\red{Our main reference for the equivariant $K$-theory is \cite{ChGi}.}
	Let $X$ be a complex quasiprojective variety equipped with an action of a torus $\T$. We consider \red{the} equivariant $K$-theory of coherent sheaves $G^\T(X)$ and \red{the} equivariant $K$-theory of vector bundles $K^\T(X)$. For a smooth variety these two notions coincide. We use the lambda operations $\lambda_y:K^\T(X) \to K^\T(X)[y]$ defined by:
	$$ \lambda_y(E):=\sum_{i=0}^{\rank E}[\Lambda^iE]y^i \,. $$
	The operation $\lambda_{-1}:K^\T(X) \to K^\T(X)$ applied to the dual \red{bundle} is the $K$-theoretic Euler class. Namely
	$$eu(E)=\lambda_{-1}(E^*)\,. $$
	\old{For an immersion of a smooth subvariety $Y\subset X$ We denote by $eu(Y\subset X)$ the Euler class of normal bundle. Namely}
	\red{Let $Y\subset X$ be an immersion of a smooth \hbox{$\T$-invariant} locally closed subvariety. Its normal bundle is denoted by
	$$\nu(Y\subset X)=\coker (TY \to TX_{|Y}) \in K^\T(Y) \,.$$
	We denote by $eu(Y\subset X)$ the Euler class of normal bundle. Namely}
		$$eu(Y\subset X):=\lambda_{-1}(\nu^*(Y\subset X)) \in K^\T(Y) \,. $$
	
\begin{adf}
	Consider a \red{$\T$-variety} $X$.
	For an element $a \in G^\T(X)$ and a closed \red{invariant} subvariety $Y \subset X$ we say that $\supp(a) \subset Y$  if and only if $a$ \red{lies} in the image of \red{a} pushforward map
	$$G^\T(Y) \xto{i_*} G^\T(X).$$
	The short exact sequence (cf. \cite{ChGi} proposition 5.2.14)
	$$ G^\T(Y) \xto{i_*} G^\T(X) \to G^\T(X \setminus Y) \to 0 $$
	implies that \red{$\supp(a) \subset Y$} \old{this} is equivalent to $a_{|X \setminus Y}=0$.
\end{adf}
\begin{rem}
	Note that for an element $a \in K^\T(\red{X})$
	the support of $a$ \red{is not a} well defined subset
	of $X$. We \red{can} only define \red{the} notion
	$\supp(a) \subset Y$ for a closed subvariety $Y\subset X$. The fact that $\supp(a) \subset Y_1$ and $\supp(a) \subset Y_2$ \red{does not} imply that $\supp(a) \subset Y_1 \cap Y_2$.  
\end{rem}

\begin{pro}\label{cor:K}
	Consider a reducible $\T$-variety $X=X_1\cup X_2$. Denote the inclusions of the closed subvarieties $X_1$ and $X_2$ by $i$ and $j$\red{,} respectively. Then the pushforward map
	$$i_*+j_*: G^\T(X_1)\oplus G^\T(X_2) \to G^\T(X)$$
	is an epimorphism.
\end{pro}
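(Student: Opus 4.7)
The plan is to chase the excision exact sequence twice, using the closed embedding $X_1 \subset X$ and then the closed embedding $X_1 \cap X_2 \subset X_2$.

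First, I would apply the excision sequence (cf.\ \cite{ChGi}, proposition 5.2.14) for the closed subvariety $X_1 \subset X$ with open complement $U := X \setminus X_1$:
$$ G^\T(X_1) \xto{i_*} G^\T(X) \xto{r} G^\T(U) \to 0.$$
The key observation is that $U = X \setminus X_1 \subset X_2$ is an open subvariety of $X_2$, whose closed complement in $X_2$ is $X_1 \cap X_2$. Applying the excision sequence again yields an exact sequence ending with a surjection $G^\T(X_2) \twoheadrightarrow G^\T(U)$, obtained by restriction along the open inclusion $U \hookrightarrow X_2$.

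Now take any $\alpha \in G^\T(X)$. Its restriction $r(\alpha) \in G^\T(U)$ lifts to some $\beta \in G^\T(X_2)$ by the second surjection. Since $j_*\beta$ restricts to the same element of $G^\T(U)$ as $\alpha$ does (pushforward followed by restriction to the open set equals restriction, as $X_2 \cap U = U$), we conclude that $r(\alpha - j_*\beta) = 0$. By exactness of the first sequence, $\alpha - j_*\beta = i_*\gamma$ for some $\gamma \in G^\T(X_1)$, whence $\alpha = i_*\gamma + j_*\beta$, proving surjectivity.

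There is no real obstacle here; the only thing to double-check is the compatibility of pushforward along $j$ with restriction to $U$, namely that $(j_*\beta)_{|U} = \beta_{|U}$, which is just the base change for the Cartesian square with $U$ open in both $X$ and $X_2$. Everything else is a formal two-step diagram chase in the localization sequence for equivariant $K$-theory of coherent sheaves.
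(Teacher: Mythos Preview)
Your proposal is correct and follows essentially the same argument as the paper: both use the excision sequence for $X_1\subset X$, identify the open complement $U=X\setminus X_1$ with an open subset of $X_2$, and invoke the base-change (pushforward--pullback) compatibility to see that $G^\T(X_2)\xto{j_*}G^\T(X)\to G^\T(U)$ agrees with the restriction map, which is surjective by the second excision sequence. The paper phrases this as ``it suffices that the composition is an epimorphism'' while you carry out the element-wise diagram chase, but the content is identical.
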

\begin{proof}
	Denote by $U_1$ and $U_2$  the complements of the closed sets $X_1$ and $X_2$. Note that due to the exact sequence
	$$ G^\T(X_1) \xto{i_{*}} G^\T(X) \to G^\T(U_1) \to 0$$
	it is enough to prove that the composition
	$$\alpha: G^\T(X_2) \xto{j_{*}} G^\T(X) \to G^\T(U_1)$$
	is an epimorphism. Note that $U_1 \cap X_2=U_1$ and by pushforward pullback argument the map $\alpha$ is equal to the restriction to open subset
	$$G^\T(X_2)\to G^\T(U_1).$$
	Such restriction
	is \red{an} epimorphic map
	due to the exact sequence of \red{a} closed immersion.
\end{proof}
\red{Consider a $\C^*$-variety $F$ for which the action is trivial. Every equivariant vector bundle  $E\in Vect^{\C^*}(F)$ decomposes as a sum of $\C^*$-eigenspaces
$$E=\bigoplus\limits_{n\in \Z} E_n \,.$$
The sum $E^+=\oplus_{n>0} E_n$ is called the attracting (or positive) part of $E$ while the the sum $E^-=\oplus_{n<0} E_n$ is called the repelling (or negative) part.
The assignment of the positive (or negative) part induces a map of the equivariant K-theory. Namely}
\begin{pro}
	\label{lem:attr}
	Let $\sigma \subset \T$ be a one dimensional subtorus. Suppose that $F$ is a $\T$-variety \red{for which the action of $\sigma$ is trivial}. Then taking \red{the} attracting part with respect to the torus $\sigma$ \old{of a vector bundle} induces a well defined map
	$$K^\T(F) \to K^\T(F)\,.$$
	\red{An} analogous result holds for \red{the} repelling part. More generally taking direct summand corresponding to a chosen character of $\sigma$ induces such a map.
\end{pro}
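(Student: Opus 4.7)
The plan is to lift the assignment $E \mapsto E_n$ to an exact additive endofunctor of $Vect^\T(F)$ and then pass to the Grothendieck group; the attracting and repelling parts are then finite direct sums of such functors.

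First, I would establish that every $\T$-equivariant vector bundle $E$ on $F$ decomposes globally as $E = \bigoplus_{n \in \Z} E_n$, where each $E_n$ is a $\T$-equivariant sub-bundle on which $\sigma$ acts by the character $t \mapsto t^n$. Since $\sigma$ acts trivially on $F$, a $\sigma$-equivariant structure on an $\mathcal{O}_F$-module is the same as a $\Z$-grading (as $\sigma$ is one-dimensional), and applied to the sheaf of sections of $E$ this yields a direct sum decomposition into $\sigma$-isotypic coherent subsheaves. Because $\T$ is abelian its action commutes with $\sigma$, so the $\T$-action preserves each graded piece, making $E_n$ a $\T$-equivariant subsheaf. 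Local freeness of $E_n$ follows because the projection onto the $n$-th eigenspace is an $\mathcal{O}_F$-linear idempotent endomorphism of the locally free sheaf $E$, and its image is automatically a direct summand.

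Next, I would verify that the functor $E \mapsto E_n$ is exact. A short exact sequence $0 \to E' \to E \to E'' \to 0$ of $\T$-equivariant vector bundles on $F$ is fiberwise a short exact sequence of $\sigma$-representations, and since $\sigma$ is a torus the operation of taking the $n$-th weight subspace is exact. Thus $0 \to E'_n \to E_n \to E''_n \to 0$ remains exact, so $E \mapsto E_n$ descends to a well-defined group homomorphism $K^\T(F) \to K^\T(F)$.

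Finally, for the attracting part $E^+ = \bigoplus_{n > 0} E_n$ (and symmetrically for $E^-$), since $E$ has finite rank only finitely many summands are nonzero, so $E \mapsto E^+$ is a finite direct sum of the exact functors just constructed, hence itself exact. The only subtle point in the argument is the global splitting in the first step: the fiberwise existence of eigenspace decompositions is immediate, but one must check that these fiberwise decompositions glue into a splitting of the bundle as a whole. Everything after that is formal.
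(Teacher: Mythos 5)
Your proof is correct and follows essentially the same route as the paper: establish the $\sigma$-weight decomposition of $\T$-equivariant bundles (which the paper states just before the proposition), observe that it is respected by $\T$-equivariant maps, and deduce that extracting any subset of weights is exact and hence descends to $K^\T(F)$. You spell out the global existence of the decomposition in more detail (via the $\mathcal{O}_F$-linear idempotent projectors), which the paper takes for granted, but the underlying argument is the same.
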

\begin{proof}
	$\T$-equivariant maps of vector bundles preserve \red{the} weight decomposition with respect to the torus $\sigma$.
	Thus\red{, any} exact sequence of $\T$-vector bundles splits into \red{a} direct sum of sequences corresponding to characters of $\sigma$.
	It follows that taking the part corresponding to any subset of characters preserves exactness.
\end{proof}
\begin{rem}
	\red{Denote by $R(\sigma)$ the representation ring of the torus $\sigma$.} Proposition \ref{lem:attr} is a consequence of an isomorphism
	$$K^\T(F) \simeq K^{\T/\sigma}(F)\otimes R(\sigma) .$$
\end{rem}

\subsection{BB-decomposition}
 The BB-decomposition was introduced in \cite{B-B1} and further studied in \cite{B-B3} (see also \cite{CarBB} for a survey).
 We recall here its definition and fundamental properties. Consider a smooth $\sigma=\C^*$-variety $X$.
	\begin{adf} \label{def:BB}
		Let $F$ be a component of the fixed point set $X^\sigma$. The positive BB-cell of $F$ is the subset
		$$X_F^+=\{ x\in X\;|\; \lim_{t\to 0} t\cdot x\in F\} \,.$$
		Analogously the negative BB-cell of $F$ is the subset
		$$X_F^-=\{ x\in X\;|\; \lim_{t\to \infty} t\cdot x\in F\} \,.$$	
	\end{adf}
It follows from \cite{B-B1} that
	\begin{atw} \label{tw:BB}
		\begin{enumerate}
			\item The BB-cells are locally closed, smooth, algebraic subvarieties of~$X$. Moreover, we have the equality of vector bundles
			$$T(X_F^+)_{|F}=(TX_{|F})^+\oplus TF $$
			\item  There exists an algebraic morphism
			$$\lim_{t\to 0}: X_F^+\to F \,.$$
			\item Suppose that the variety $X$ is projective. Then there is a set decomposition (called BB-decomposition)
			$$X=\bigsqcup_{F\subset X^\sigma} X_F^+ \,.$$
			\item Suppose that the variety $X$ is projective. Then the morphism $\lim_{t\to 0}: X_F^+\to F$ is an affine bundle.
			\item Suppose that a bigger torus $\sigma\subset\T$ acts on $X$. Then the BB-cells (defined by the action of $\sigma$) are $\T$-equivariant subvarieties.
			\item The BB-decomposition induces a partial order on the fixed point set $X^\sigma$, defined by the transitive closure of the relation
			$$F_2\in\overline{X^+_{F_1}} \Rightarrow F_1>F_2 \,.$$
		\end{enumerate}
	\end{atw}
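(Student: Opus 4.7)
The plan is to reduce every statement to a local analysis near each component $F$ of the fixed locus $X^\sigma$, and then, for the parts requiring $X$ projective, to globalize via the valuative criterion of properness. The local step relies on an equivariant linearization of the $\sigma$-action near $F$: using Sumihiro's theorem combined with an equivariant formal or étale tubular neighborhood, one identifies a neighborhood of $F$ in $X$ with an étale neighborhood of the zero section of the normal bundle $\nu(F\subset X)$, on which $\sigma$ acts fiberwise by its weights. Since $F$ is pointwise fixed and $\sigma\simeq\C^*$, the restriction splits as $TX_{|F}=(TX_{|F})^+\oplus TF\oplus (TX_{|F})^-$ according to the sign of the weight.

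From this local model, statements (1) and (2) are immediate: in the chart $X_F^+$ is cut out by the vanishing of the negative-weight normal coordinates, so it is smooth and locally closed with tangent bundle $(TX_{|F})^+\oplus TF$ along $F$, and the projection onto the zero section realizes $\lim_{t\to 0}$ as an algebraic morphism. Local definitions glue because the limit is characterized pointwise.

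For (3), properness implies that every orbit map $\sigma\to X$ extends to $\PP^1\to X$ by the valuative criterion, so each $x\in X$ has a well-defined $t\to 0$ limit lying in $X^\sigma$; uniqueness of the limit yields the disjoint decomposition. The main obstacle is (4): the local linearization gives only an étale-local affine bundle structure on $\lim_{t\to 0}\colon X_F^+\to F$ with fibers of the correct dimension, and promoting this to a Zariski-locally trivial algebraic affine bundle requires a genuinely global argument. Following Białynicki-Birula, I would exploit the attracting $\sigma$-action by showing that two points of $X_F^+$ with the same limit differ uniquely by an element of the attracting bundle $(TX_{|F})^+$, equipping the fibration with an algebraic torsor structure under $(TX_{|F})^+$ and hence with Zariski-local triviality over $F$.

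Finally, (5) is immediate: the connected torus $\T$ commutes with $\sigma$ and therefore preserves each component of $X^\sigma$, so it commutes with the limit operation and hence preserves every $X_F^+$. For (6), transitivity holds by construction, so it remains to verify that the generated relation is irreflexive; this rules out cycles and follows from the strict monotonicity of a moment map for $\sigma$ along the generating relation, using that every smooth complex projective variety is Kähler.
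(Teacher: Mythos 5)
The paper does not prove this theorem; the statement is preceded by ``It follows from \cite{B-B1} that'' and is cited as classical Białynicki-Birula theory (with \cite{B-B3}, \cite{CarBB} given as further references). So there is no paper proof to compare against — your proposal is an independent reconstruction, and I will assess it on its own merits.

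Your étale-local linearization strategy handles (1), (2), (3), (5), and (6) reasonably: smoothness and local closedness are étale-local properties, the valuative-criterion argument for (3) is correct, (5) is formal, and the moment-map monotonicity argument for antisymmetry in (6) is valid (after equivariantly compactifying via Sumihiro if $X$ is only quasiprojective). The genuine gap is (4), and you flag it yourself but do not close it. The claim that ``two points of $X_F^+$ with the same limit differ uniquely by an element of the attracting bundle $(TX_{|F})^+$'' is not justified: a fiber of $\lim_{t\to 0}$ has no canonical origin and no canonical vector-space (let alone torsor) structure under $(TX_{|F})^+_p$. The $\sigma$-action on the fiber is by attracting weights, not by translation, so there is no ``difference of two points'' operation, and the proposed torsor structure does not exist. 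This is also not ``following Białynicki-Birula'': the original proof of local triviality in \cite{B-B1} (Theorem 4.3) proceeds by reducing to an equivariant projective embedding and an induction on weights, while the modern functor-of-points proofs (Drinfeld; Jelisiejew–Sienkiewicz \cite{JeSi}, which the paper cites) use deformation-theoretic arguments to exhibit the attractor as a scheme representing a functor, and then show étale triviality implies Zariski triviality for this specific affine fibration. Either route requires a real argument that your sketch replaces with an assertion. A secondary, smaller issue is that the étale/formal linearization used for (1)–(2) does not by itself produce a Zariski chart; smoothness and local closedness do descend along étale maps, so the conclusions survive, but the phrase ``in the chart'' obscures that the linearization is not a Zariski-local identification of $X$ with its normal bundle.
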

\begin{adf}[cf. \cite{OM} paragraph 3.2.1]
	Suppose that \red{$X$ is a smooth $\T$-variety.}
	Consider the space of cocharacters
	$$\mathfrak{t}:=\Hom(\C^*,\T)\otimes_\Z\R \,.$$
	For a fixed points component $F \subset X^\T$, denote \red{by $v_1^F,...,v_{\codim F}^F$} the torus weights appearing in the normal bundle.
	\red{A} weight chamber
	 is a connected component of the set
	$$\mathfrak{t} \setminus\bigcup_{F\subset \red{X^\T}, i \le \text{codim}F}\{v_i^F=0\}.$$
\end{adf}

Suppose that a torus $\T$ acts on a smooth variety $X$. For \red{a} one dimensional subtorus $\sigma \subset \T$ and a weight chamber $\mathfrak{C}$ we \red{write}
 $\sigma \in \mathfrak{C}$ when the cocharacter of $\sigma$ belongs to the chamber $\mathfrak{C}$. 

\begin{pro} \label{lem:st}
	\red{Let $X$ be a smooth $\T$-variety.} Consider one dimensional subtorus $\sigma \subset \T$ such that $\sigma \in \mathfrak{C}$ for some weight chamber $\mathfrak{C}$. Then the fixed \red{point} sets
	$X^\T$ and $X^\sigma$ are equal.
\end{pro}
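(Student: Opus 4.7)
The inclusion $X^\T \subseteq X^\sigma$ is immediate, so the plan is to establish the reverse inclusion. My strategy is to prove it in two stages: first, a local tangent-space computation shows that $X^\T$ is open and closed inside $X^\sigma$; second, a fixed-point argument rules out extra connected components of $X^\sigma$ disjoint from $X^\T$.

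For the first stage, I would work at an arbitrary point $p$ of a $\T$-fixed component $F \subseteq X^\T$. Decompose the tangent space as a $\T$-representation: $T_p X = T_p F \oplus \nu_p$, where $\nu_p$ is the fiber of $\nu(F\subset X)$ at $p$ carrying precisely the weights $v_1^F,\ldots,v_{\codim F}^F$. The defining property of the weight chamber $\mathfrak{C}$ is that the cocharacter of $\sigma$ pairs nontrivially with every $v_i^F$, so the $\sigma$-invariants $(\nu_p)^\sigma$ vanish. Consequently $(T_p X)^\sigma = T_p F$. Since $X^\sigma$ is a smooth closed subvariety whose tangent space at any $\sigma$-fixed point is the $\sigma$-invariant part of the ambient tangent space, the connected component of $X^\sigma$ through $p$ agrees with $F$ in a neighborhood of $p$. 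Hence $X^\T$ is open in $X^\sigma$, and being the $\T$-fixed locus it is also closed. So $X^\T$ is a union of connected components of $X^\sigma$.

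For the second stage, let $C$ be any connected component of $X^\sigma$; I want to show $C \cap X^\T \neq \emptyset$. The quotient torus $\T/\sigma$ acts algebraically on $C$, and in the geometric settings relevant to this paper (projective $M$ with finite fixed point set, or cotangent bundles thereof) Borel's fixed point theorem provides a $\T/\sigma$-fixed point on $C$. Such a point is then a $\T$-fixed point of $X$, showing $C$ meets $X^\T$; combined with the first stage, this forces $C \subseteq X^\T$ and completes the proof.

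I expect the tangent-space computation in the first stage to be routine once the weight-chamber definition is unpacked. The delicate ingredient is really the second stage: one needs enough compactness or properness to guarantee that the $\T/\sigma$-action on each component of $X^\sigma$ has a fixed point. This is harmless in the ambient setting of the main theorem but would require extra hypotheses for an arbitrary smooth quasiprojective $\T$-variety.
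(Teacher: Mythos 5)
The paper states this proposition without proof, so there is nothing to compare against; I'll assess your argument on its own terms. Stage 1 is correct and complete: since $\sigma\in\mathfrak{C}$ forces $\langle v_i^F,\sigma\rangle\neq 0$ for every normal weight $v_i^F$ at every $F\subset X^\T$, one gets $(T_pX)^\sigma=T_pF$ at any $p\in F$, and because the fixed locus of a linearly reductive group (here $\sigma\cong\C^*$) in a smooth variety is smooth with Zariski tangent space the invariant subspace, $F$ and $X^\sigma$ agree near $p$; thus each component of $X^\T$ is a connected component of $X^\sigma$. Stage 2 is also the right idea, and your caveat about needing properness is not just caution---it is essential. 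As stated, for an arbitrary smooth quasiprojective $\T$-variety the proposition is actually false: take $X=\C^*$ with $\T=(\C^*)^2$ acting through the first coordinate. Then $X^\T=\varnothing$, so the weight-chamber condition is vacuous and $\mathfrak{C}=\mathfrak{t}$, yet the second-coordinate subtorus $\sigma$ acts trivially, giving $X^\sigma=X\neq\varnothing$. Your Borel fixed-point step is what rules this out, and it requires each component of $X^\sigma$ to be proper (equivalently, $X$ projective, or some equivalent completeness of the relevant components). In the paper's actual applications---$M$ projective, or $X=T^*M$ with $\sigma\subset A$ where the statement for $T^*M$ reduces to the one for $M$---this holds, so the proposition is used safely; but the hypothesis ``smooth $\T$-variety'' in the statement is too weak, and you are right to flag it. The one small gap in your write-up is that you assert Borel's theorem applies ``in the geometric settings relevant to this paper'' without isolating the precise hypothesis; stating explicitly that it suffices for $X$ (hence each closed $\T$-invariant component $C$ of $X^\sigma$) to be complete would close the argument cleanly.
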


\begin{pro}
	\label{lem:chamb}
		\red{Let $X$ be a smooth $\T$-variety.} Choose a weight chamber $\mathfrak{C}$.
		Consider one dimensional subtori \hbox{$\sigma_1,\sigma_2 \subset \T$} such that $\sigma_1,\sigma_2 \in \mathfrak{C}$. Then the tori $\sigma_1$ and $\sigma_2$ induce the same decomposition of \red{the} normal bundle to the fixed \red{point} set
		\red{into} the attracting and \red{the}
	repelling
	part. Moreover\red{,} the BB-decompositions with respect to these tori are equal.
\end{pro}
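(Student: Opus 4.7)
The plan is as follows. By Proposition \ref{lem:st}, the fixed sets satisfy $X^{\sigma_1} = X^{\sigma_2} = X^\T$, so both BB-decompositions are indexed by the same components $F$ of the fixed point set. Fix such a component $F$ and decompose the normal bundle $\nu(F\subset X)$ into $\T$-weight eigenbundles indexed by the weights $v_i^F$. The attracting (resp.\ repelling) part with respect to $\sigma_j$ consists of those summands for which the pairing $\langle v_i^F, \sigma_j \rangle$ is positive (resp.\ negative). Since $\sigma_1$ and $\sigma_2$ lie in the same chamber $\mathfrak{C}$, which by definition avoids every hyperplane $\{v_i^F = 0\}$, these signs are independent of $j\in\{1,2\}$; this yields the first claim.

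For the equality of BB-cells it suffices by symmetry to show $X_F^{+,\sigma_1} \subseteq X_F^{+,\sigma_2}$ for each component $F$. By Theorem \ref{tw:BB}(5), the cell $X_F^{+,\sigma_1}$ is $\T$-invariant, hence $\sigma_2$-invariant, and the algebraic morphism $\pi\colon X_F^{+,\sigma_1} \to F$, $x \mapsto \lim_{t\to 0}\sigma_1(t)\cdot x$, from Theorem \ref{tw:BB}(2) is $\sigma_2$-equivariant. Since $\sigma_2$ acts trivially on $F \subset X^\T$, its action preserves each fiber of $\pi$. By Theorem \ref{tw:BB}(1), the tangent space to $\pi^{-1}(f)$ at a point $f \in F$ equals the attracting part of $\nu(F \subset X)_f$, on which $\sigma_2$ acts with only positive weights by the first paragraph. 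A local linearization of the $\sigma_2$-action at $f$ then shows $\lim_{t\to 0}\sigma_2(t)\cdot x = f$ for $x$ in a neighborhood of $f$ inside the fiber. Moreover, any $\sigma_2$-fixed point $y \in X_F^{+,\sigma_1}$ automatically lies in $X^\T$ (again by Proposition \ref{lem:st}), whence $y = \lim_{t\to 0}\sigma_1(t)\cdot y \in F$, so the only $\sigma_2$-fixed point in $\pi^{-1}(f)$ is $f$ itself.

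The main obstacle is upgrading this local contraction on each fiber to a global one. In the projective case Theorem \ref{tw:BB}(4) identifies each fiber of $\pi$ with an affine space on which $\sigma_2$ acts linearly with all positive weights, so the conclusion $\lim_{t\to 0}\sigma_2(t)\cdot x = f$ holds throughout the fiber and therefore $x \in X_F^{+,\sigma_2}$. For general smooth quasiprojective $X$, I would reduce to the projective situation by choosing a $\T$-equivariant compactification $X \subset \bar X$ (via Sumihiro's theorem followed by equivariant resolution of singularities) and observing that $X_F^{+,\sigma_j} = X \cap \bar{X}_F^{+,\sigma_j}$, since a limit computed in $X$ agrees with the one computed in $\bar X$ whenever the former exists.
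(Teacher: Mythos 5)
Your first paragraph (that the decomposition of the normal bundle into attracting and repelling parts depends only on the chamber) is correct and is the same observation the paper dismisses as ``the only trivial part.'' For the equality of BB-decompositions, however, the paper simply cites Theorem 3.5 of \cite{HuBB}, with a one-sentence alternative via Sumihiro's embedding into $\PP^N$ with a linear action; your argument is a genuinely independent attempt, but as written it has two gaps.

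First, the projective case. You invoke Theorem \ref{tw:BB}(4) to say that the fiber $\pi^{-1}(f)$ of $\pi=\lim_{t\to 0}\sigma_1$ is ``an affine space on which $\sigma_2$ acts linearly with all positive weights.'' But Theorem \ref{tw:BB}(4) only asserts that $\pi$ is an affine bundle; it says nothing about the $\T$-action on the fibers being \emph{linear}. Linearizing a $\C^*$-action on an affine space with a unique contracting fixed point is exactly the (nontrivial) linearization problem, and it does not follow from the existence of an affine-bundle structure with respect to $\sigma_1$. The local contraction near $f$ and the uniqueness of the $\sigma_2$-fixed point in the fiber do not by themselves globalize: in the projective case $\lim_{t\to 0}\sigma_2(t)x$ exists in $X$ and is a $\T$-fixed point lying in $\overline{X_F^{+,\sigma_1}}$, but this closure contains fixed components other than $F$, so the limit could a priori escape the fiber into one of those. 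Closing this gap requires the stronger equivariant form of the Bia\l ynicki-Birula theorem (that $X_F^+\to F$ is $\T$-equivariantly a vector bundle, identified with $\nu_F^+$), which would need to be cited separately.

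Second, the reduction from quasi-projective to projective. You claim $X_F^{+,\sigma_j}=X\cap\bar X_F^{+,\sigma_j}$, but (writing $\bar F$ for the component of $\bar X^{\T}$ containing $F$) one only has the inclusion $X_F^{+,\sigma_j}\subseteq X\cap\bar X_{\bar F}^{+,\sigma_j}$: a point of $X$ may flow under $\sigma_j$ into $\bar F\setminus X$, or into a connected component of $\bar F\cap X$ other than $F$. Moreover, $\bar X$ has extra fixed-point components in $\bar X\setminus X$, hence extra walls and \emph{finer} chambers, so $\sigma_1,\sigma_2\in\mathfrak{C}$ (a chamber for $X$) need not lie in a common chamber for $\bar X$, and the projective case you just proved does not apply directly. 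Both issues must be resolved before the passage to a compactification gives the quasi-projective statement.
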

\begin{proof}
	The only nontrivial part is \red{the} equality of the BB-decompositions. It is \red{a} consequence
	of theorem 3.5 of \cite{HuBB}.
	Alternatively\red{,} thanks to the Sumihiro theorem \cite{Su} it is enough to prove the lemma for $X$ equal to the projective space. In this case\red{,} the proof is straightforward.
\end{proof}

\subsection{Symplectic varieties}

\begin{adf}[\cite{OS} section 2.1.2]
	\label{df:pol}
	Consider a smooth symplectic variety $(X,\omega)$ \red{equipped} with an action of a torus $\T$. Assume that the symplectic form $\omega$ is an eigenvector of $\T$, let $h$ be its character. \red{A} polarization
	is 
	an element $T^{1/2} \in  K^\T(X)$
	such that
	$$T^{1/2}\oplus \C_{-h}\otimes(T^{1/2})^*=TX \in  K^\T(X) \,.$$ 
\end{adf} 

\begin{rem}
	Note that according to the above definition\red{, a} polarization
	 is an element \red{of the} $K$-theory, thus it may be a virtual vector bundle.
\end{rem}

\subsection{Newton polytopes}

	Let $R$ be \red{a commutative ring with unit} and $\Lambda$ a lattice of finite rank. Consider a polynomial $f \in R[\Lambda]$. The Newton polytope $N(f) \subset \Lambda\otimes_\Z \R$ is a convex hull of lattice points corresponding to the nonzero coefficients of the polynomial $f$. We \red{recall} elementary properties of Newton polytopes:
	
\begin{pro} \label{lem:New}
	Let $R$ be \red{a commutative ring with unit}.
	 For any Laurent polynomials $f,g\in R[\Lambda]$.
	\begin{enumerate}[(a)]
		\item $N(fg) \subseteq N(f)+N(g)$.
		\item $N(fg) = N(f)+N(g)$ when the ring $R$ is a domain.
		\item $N(fg) =N(f)+N(g)$ when \red{the} coefficients of the class $f$ corresponding to the vertices of \red{the} polytope $N(f)$ \red{are not} zero divisors.
		\item $N(f+g) \subseteq conv(N(f),N(g))$.
		\item Let $\theta: R \to R'$ be a homomorphism of rings
		and $\theta': R[\Lambda] \to R'[\Lambda]$ its extension.
		Then $N(\theta'(f))\subseteq N(f)$.
	\end{enumerate}
\end{pro}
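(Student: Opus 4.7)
The plan is to reduce everything to pointwise coefficient tracking via the formulas
\[
(fg)_v = \sum_{u+w=v} f_u g_w, \qquad (f+g)_v = f_v + g_v,
\]
combined with the standard structure of vertices of Minkowski sums for parts (b) and (c).

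Parts (a), (d), and (e) are nearly immediate. For (a), if $v \in \supp(fg)$ then some summand in $(fg)_v$ is nonzero, forcing a decomposition $v = u+w$ with $u \in \supp(f)$ and $w \in \supp(g)$; hence $\supp(fg) \subseteq \supp(f)+\supp(g)$, and taking convex hulls yields $N(fg) \subseteq N(f)+N(g)$ since Minkowski sum commutes with the convex hull of the generating sets. For (d), clearly $\supp(f+g) \subseteq \supp(f) \cup \supp(g)$, so $N(f+g) \subseteq conv(N(f) \cup N(g)) = conv(N(f),N(g))$. For (e), a ring homomorphism can only send nonzero coefficients to zero, so $\supp(\theta'(f)) \subseteq \supp(f)$.

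The substantive parts are (b) and (c). In view of (a), it suffices to show that every vertex of $N(f)+N(g)$ lies in $\supp(fg)$. Here I invoke the standard fact about Minkowski sums: for any linear functional $\ell$ on $\Lambda\otimes_\Z\R$, the face of $N(f)+N(g)$ on which $\ell$ is maximized is the Minkowski sum of the $\ell$-maximizing faces of $N(f)$ and $N(g)$. Given a vertex $v$ of $N(f)+N(g)$, choose an $\ell$ generic enough that $v$ is the unique maximizer on $N(f)+N(g)$; then the maximizers on $N(f)$ and $N(g)$ are unique vertices $u$ and $w$ with $v=u+w$. Any other decomposition $v = u'+w'$ with $u'\in N(f)$, $w'\in N(g)$ would satisfy $\ell(u') \le \ell(u)$ and $\ell(w') \le \ell(w)$ while summing to $\ell(v)$, forcing equality and hence $u'=u$, $w'=w$. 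Consequently the sum defining $(fg)_v$ collapses to the single product $f_u g_w$.

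With this uniqueness in hand, (b) follows because in a domain the product of the nonzero elements $f_u$ and $g_w$ is nonzero. For (c), the hypothesis gives that $f_u$ is not a zero divisor while $g_w \neq 0$ (since $w$ is forced to be a vertex of $N(g)$), so $f_u g_w \neq 0$. The main obstacle I foresee is phrasing the vertex-uniqueness argument cleanly; once that combinatorial fact is in place, the rest amounts to bookkeeping of coefficients.
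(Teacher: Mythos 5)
The paper states this proposition without proof (it is introduced as a recollection of elementary properties of Newton polytopes), so there is no paper argument to compare against. Your proof is correct and is the standard argument: parts (a), (d), (e) by immediate support tracking, and parts (b), (c) by the vertex-decomposition fact for Minkowski sums, noting that the unique decomposition $v=u+w$ of a vertex $v$ of $N(f)+N(g)$ has $u$, $w$ vertices (hence support points) of $N(f)$, $N(g)$, so the coefficient $(fg)_v$ collapses to the single product $f_u g_w$, which is nonzero under either hypothesis.
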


Consider a smooth variety $X$ equipped with an action of a torus $\T$. Choose a subtorus $A \subset \T$. For \red{a fixed point set}
component $F\subset X^\T$ and an element $a\in K^\T(X)$ we want to \red{define}
the Newton polytope
$$N^A(a_{|F})\subset \Hom(A,\C^*)\otimes_Z\R :=\mathfrak{a}^*. $$
It is possible due to the following proposition:

\begin{pro} \label{lem:N(a)}
	\red{Let $F$ be a smooth $\T$-variety.}
	 Let  $A\subset \T$ be a subtorus which acts trivially on $F$.
	 Any \red{splitting}
	 of the inclusion $A\subset \T$ induces isomorphism
	$$\alpha: K^\T(F) \simeq K^{\T/A}(F) \otimes_{\Z} R(A) \simeq K^{\T/A}(F)[\Hom(A,\C^*)] 
	\,,$$
	\red{where $R(A)$ denotes the representation ring of the torus $A$.}
	For an element $a \in K^\T(F)$ we consider the Newton polytope $N^A(a) \subset \mathfrak{a}^*$
	\red{defined by the polynomial $\alpha(a)$}.
	The isomorphism $\alpha$ depends on the choice of \red{splitting},
	 yet the Newton polytope is independent \red{of} it.
\end{pro}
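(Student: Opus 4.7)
The plan is to construct $\alpha$ from the splitting by taking $A$-weight decompositions, then to show that changing the splitting multiplies each $A$-weight component by a unit in $R(\T/A)$, so that the set of nonzero weights---and hence the Newton polytope---is preserved.

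First I would produce the isomorphism. A splitting of the inclusion $A \subset \T$ is equivalent to a splitting $s: \T/A \to \T$ of the quotient and gives an isomorphism of tori $\T \simeq A \times \T/A$. Since $A$ acts trivially on $F$, the generalization of Proposition \ref{lem:attr} (taking summands indexed by arbitrary $A$-characters) yields a canonical $A$-weight decomposition
$$E = \bigoplus_{\chi \in \Hom(A,\C^*)} E_\chi$$
of every $\T$-equivariant vector bundle $E$ on $F$. Each $E_\chi$ becomes $\T/A$-equivariant by restriction of the $\T$-action along $s(\T/A) \subset \T$, and the assignment $[E] \mapsto \sum_\chi [E_\chi]_s \otimes e^\chi$ defines $\alpha$. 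Its bijectivity follows from the product decomposition of equivariant $K$-theory under $A \times \T/A$ when the $A$-factor acts trivially, together with $R(A) = \Z[\Hom(A,\C^*)]$.

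Next I would check that $N^A(a)$ does not depend on the splitting. Two splittings $s$ and $s'$ differ by a unique homomorphism $\phi: \T/A \to A$ with $s'(g) = s(g)\phi(g)$. For a section $v$ of $E_\chi$ one then computes
$$s'(g) \cdot v = s(g)\phi(g) \cdot v = \chi(\phi(g)) \cdot s(g) \cdot v,$$
using that $\phi(g) \in A$ acts on $E_\chi$ by the scalar $\chi(\phi(g))$. Passing to $K$-theory gives $[E_\chi]_{s'} = (\chi \circ \phi) \cdot [E_\chi]_s$ in $K^{\T/A}(F)$, where $\chi \circ \phi \in \Hom(\T/A,\C^*)$ is a character. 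Since every character is a unit in $R(\T/A)$, multiplication by $\chi \circ \phi$ is bijective on $K^{\T/A}(F)$, so $[E_\chi]_s$ is nonzero if and only if $[E_\chi]_{s'}$ is. Consequently the set of weights contributing to $\alpha(a)$, and in particular its Newton polytope, is independent of the splitting.

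The only delicate point is the bookkeeping in the comparison of the two $\T/A$-equivariant structures on the weight pieces $E_\chi$; once the identity $[E_\chi]_{s'} = (\chi \circ \phi)\cdot[E_\chi]_s$ is established, the invariance of $N^A$ is immediate from the elementary fact that characters are units in $R(\T/A)$.
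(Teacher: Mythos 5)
Your proof is correct and takes essentially the same route as the paper: you identify the difference of two splittings with a homomorphism $\phi\colon\T/A\to A$ and show that the weight-$\chi$ component acquires a $\T/A$-structure twisted by the character $\chi\circ\phi$, so that $\alpha_2\circ\alpha_1^{-1}$ multiplies the coefficient at $e^\chi$ by a unit and hence preserves the support of the polynomial. The paper phrases this as the single formula $\alpha_2\circ\alpha_1^{-1}(Ee^{\chi})=(E\otimes\C_{\chi\circ h})e^{\chi}$ and draws the same conclusion.
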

\begin{proof}
	Consider two \red{splittings}
	$s_1,s_2: \T/A \to \T$. Denote by $\alpha_1,\alpha_2$ the induced isomorphisms
	$$K^{\T}(F) \to K^{\T/A}(F)[\Hom(A,\C^*)]\,. $$
	Note that the quotient $\frac{s_2}{s_1}$ induces a group homomorphism \hbox{$h:\T/A \to A$.} Consider an arbitrary class $E \in K^{\T/A}(F)$ and a character $\chi \in \Hom(A,\C^*)$. Direct calculation \red{provides}
	us with the formula 
	$$\alpha_2\circ\alpha_1^{-1}(Ee^{\chi})=\left(E\otimes\C_{\chi\circ h}\right)e^{\chi} \,.$$
	Thus\red{,} the Newton polytope is independent \red{of} the choice of
	\red{splitting}.
\end{proof}
\begin{rem} \label{rem:N}
	Consider the situation as in proposition \ref{lem:N(a)}. Let $E$ be a $\T$-vector bundle over $F$. Then the Euler class $eu(E)$ satisfies \red{the} assumption of proposition \ref{lem:New} (c). To see this use \red{the} weight decomposition of $E$ with respect to the torus $A$. Note that for a vector bundle $V$ with \red{an} action of $A$ given by a single character the Newton polytope $N^A(eu(V))$ is an interval. Moreover\red{,} the coefficients corresponding to the ends of this interval are invertible (they are equal to classes of the line bundles $1$ and $\det V^*$).
\end{rem}

At the end of this section we want to \red{mention the} behaviour of \old{the} polytope $N^A$ after restriction to one dimensional subtorus of $A$. Namely let $\sigma \subset A$ be a one dimensional subtorus. Denote by
$$|_\sigma:K^A(pt) \to K^{\sigma}(pt)$$
\red{the} induced map on the $K$-theory and by
$$\pi_\sigma:\Hom(A,\C^*) \otimes_\Z \R \to \Hom(\sigma,\C^*) \otimes_\Z \R$$
\red{the} induced map on the characters.

\begin{pro} \label{lem:Npi}
	For an element $a \in K^\T(F)$ there exists a finite union of hyperplanes $K$ in the vector space of cocharacters such that for all one dimensional subtori $\sigma$ whose cocharacter \red{does not} belong to $K$
	$$N^\sigma(a|_\sigma)=\pi_\sigma \left(N^A(a)\right) .$$
\end{pro}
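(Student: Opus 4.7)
The plan is to fix a splitting of $A \subset \T$ and work with the explicit expansion of $a$ as a Laurent polynomial. Write
$$a = \sum_{\chi \in S} c_\chi e^\chi \in K^{\T/A}(F)[\Hom(A,\C^*)],$$
where $S \subset \Hom(A,\C^*)$ is the finite set of characters with nonzero coefficient $c_\chi \in K^{\T/A}(F)$; by definition $N^A(a) = conv(S)$.

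Restriction to $\sigma$ sends $e^\chi$ to $e^{\pi_\sigma(\chi)}$, so after regrouping terms with the same exponent one gets
$$a|_\sigma = \sum_{\mu \in \pi_\sigma(S)} \Bigl( \sum_{\chi \in S,\; \pi_\sigma(\chi) = \mu} c_\chi \Bigr) e^{\mu}.$$
The inclusion $N^\sigma(a|_\sigma) \subseteq conv(\pi_\sigma(S)) = \pi_\sigma(N^A(a))$ is then automatic; to obtain the reverse inclusion one must avoid cancellation at the vertices of the right-hand polytope.

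I would ensure this by forcing $\pi_\sigma$ to be injective on $S$. Under injectivity each inner sum above reduces to a single nonzero $c_\chi$, so the support of $a|_\sigma$ is exactly $\pi_\sigma(S)$ and hence $N^\sigma(a|_\sigma) = conv(\pi_\sigma(S))$. The map $\pi_\sigma$ fails to be injective on $S$ precisely when the cocharacter of $\sigma$ lies in the kernel of some character $\chi - \chi'$ with $\chi \neq \chi'$ in $S$; each such kernel is a linear hyperplane in the cocharacter space of $A$, and since $S$ is finite the union $K$ of these hyperplanes---indexed by the finitely many pairs---is again a finite union of hyperplanes. Any one-dimensional subtorus $\sigma$ whose cocharacter avoids $K$ then satisfies the desired equality.

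The only mildly delicate point I foresee is that the containment of Newton polytopes is free but equality is not: it requires controlling the coefficients after projection. The observation is that the sole mechanism by which a vertex of $\pi_\sigma(N^A(a))$ could disappear is a collision of two characters in $S$ under $\pi_\sigma$, and collisions are cut out by finitely many linear equations on the cocharacter of $\sigma$---exactly the codimension-one condition the proposition asks for.
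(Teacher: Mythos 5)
The paper states Proposition \ref{lem:Npi} without proof, so there is no internal argument to compare against; your job here is essentially to supply the missing proof, and what you wrote is correct.

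Your route is the natural one: decompose $a=\sum_{\chi\in S}c_\chi e^\chi$ under a splitting (and, as the proof of Proposition \ref{lem:N(a)} shows, the support $S$ does not actually depend on that choice, only the coefficients get twisted by a line bundle, which does not affect vanishing), observe that restriction to $\sigma$ collapses characters along the fibers of $\pi_\sigma$, and then rule out the only failure mode---cancellation among collided coefficients---by demanding $\pi_\sigma$ be injective on $S$. Since $\pi_\sigma(\chi)=\pi_\sigma(\chi')$ is equivalent to the cocharacter of $\sigma$ lying in $\ker(\chi-\chi')$, a proper hyperplane for each of the finitely many pairs $\chi\neq\chi'$ in $S$, this produces the asserted finite union $K$, and outside $K$ the support of $a|_\sigma$ is exactly $\pi_\sigma(S)$, whence $N^\sigma(a|_\sigma)=\mathrm{conv}(\pi_\sigma(S))=\pi_\sigma(\mathrm{conv}(S))=\pi_\sigma(N^A(a))$, the middle equality because $\pi_\sigma$ is linear. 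One could get by with a weaker condition (only rule out ties at the $\pi_\sigma$-extremal points of $S$, i.e.\ uniqueness of the maximizer and minimizer of $\pi_\sigma$ on $S$), but since the proposition only asks for \emph{some} finite union of hyperplanes, full injectivity on $S$ is the cleanest choice and your argument closes without gaps.
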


\begin{pro} \label{lem:ver}
 \red{Let $M$ be a smooth $A$-variety.}
 Consider \red{a} one dimensional subtorus
 $\sigma \subset A$ such that $\sigma \in \mathfrak{C}$ for some weight chamber $\mathfrak{C}$. Let $F$ be a component of \red{the fixed point set $M^A$.}
  Then the point $0$ is a vertex of the polytope $N^A(eu(\nu_F^-))$. Moreover\red{,} the point $\pi_\sigma(0)$ is the minimal term of \red{the} line segment
 $\pi_\sigma\left(N^A(eu(\nu_F^-))\right)$.
\end{pro}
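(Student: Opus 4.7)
The plan is to reduce $eu(\nu_F^-)$ to an explicit product of linear factors and then apply the Newton polytope calculus of Proposition \ref{lem:New}. First, decompose the normal bundle $\nu(F\subset M)$ into $A$-eigen-line-bundles $\bigoplus_i L_{v_i}$; this is possible because $A$ acts trivially on $F$. Proposition \ref{lem:st} guarantees that the $\sigma$-fixed locus coincides with $M^A$, so the attracting/repelling splitting with respect to $\sigma$ is well defined on $\nu(F\subset M)$. Concretely $\nu_F^- = \bigoplus_{\pi_\sigma(v_i)<0} L_{v_i}$, whence
$$eu(\nu_F^-) \;=\; \prod_{i:\; \pi_\sigma(v_i)<0} \bigl(1 - e^{-v_i}\bigr) \;\in\; K^A(F).$$

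Each factor $1-e^{-v_i}$ has Newton polytope equal to the line segment $[0,-v_i]\subset \mathfrak{a}^*$, and its two vertex coefficients $1$ and $-1$ are invertible (as noted in Remark \ref{rem:N}). Iterated application of Proposition \ref{lem:New}(c) then yields the zonotope description
$$N^A(eu(\nu_F^-)) \;=\; \sum_{i:\; \pi_\sigma(v_i)<0} [0,-v_i] \;\subset\; \mathfrak{a}^*.$$

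To show that $0$ is a vertex of this zonotope, I exhibit a linear functional on $\mathfrak{a}^*$ attaining its unique maximum at $0$. The cocharacter of $\sigma$ defines, via the natural pairing, exactly the functional $\pi_\sigma$. By hypothesis $\pi_\sigma(v_i)<0$ for every generator, so $-\pi_\sigma(-v_i) = \pi_\sigma(v_i) < 0 = -\pi_\sigma(0)$. Hence $-\pi_\sigma$ is strictly smaller on every generator $-v_i$ than on $0$, making $0$ the unique maximum of $-\pi_\sigma$ on the zonotope; this is exactly the vertex criterion.

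For the second claim, apply the linear map $\pi_\sigma$ to the Minkowski decomposition above:
$$\pi_\sigma\bigl(N^A(eu(\nu_F^-))\bigr) \;=\; \sum_{i:\; \pi_\sigma(v_i)<0} [0,\,-\pi_\sigma(v_i)].$$
Each summand is a segment in $\R$ from $0$ to a positive number, so the Minkowski sum collapses to the single segment $\bigl[0,\;\sum_i -\pi_\sigma(v_i)\bigr]$, whose minimum is $0=\pi_\sigma(0)$. The only potential pitfall is keeping sign conventions straight between $\C^*$-characters and their duals in the Euler class formula; once this is fixed, the statement reduces to elementary geometry of zonotopes.
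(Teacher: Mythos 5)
The paper states Proposition \ref{lem:ver} without proof (it sits in the Tools section as a collected fact), so there is no paper proof to compare against. Your argument is correct and follows the route one would expect given the machinery developed in that section: reduce to a Minkowski sum of segments, cite the invertibility of vertex coefficients to apply Proposition \ref{lem:New}(c) exactly, and then exhibit $-\pi_\sigma$ as a supporting functional strictly maximized at $0$. Your sign bookkeeping is right: the repelling part consists of characters with $\pi_\sigma(v_i)<0$, the Euler class factor $1-e^{-v_i}$ contributes the segment $[0,-v_i]$, and $-\pi_\sigma$ is strictly negative on each generator $-v_i$ while vanishing at $0$, so $0$ is the unique maximizer on the zonotope.

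One imprecision worth tightening: the decomposition of $\nu(F\subset M)$ as a sum of $A$-eigen-\emph{line}-bundles is not automatic when $F$ has positive dimension. The $A$-weight spaces of a $\T$-equivariant bundle over a trivially acted $F$ are $\T$-eigenbundles $E_{v_i}$ of possibly higher rank $r_i$, so the product formula $\prod_i(1-e^{-v_i})$ is only literally right when $F$ is a point. The repair is exactly what Remark \ref{rem:N} supplies: $N^A(eu(E_{v_i}))=[0,-r_iv_i]$ with vertex coefficients $1$ and $\pm\det V_i^*$, both invertible in $K^{\T/A}(F)$, so Proposition \ref{lem:New}(c) still applies and the zonotope becomes $\sum_i[0,-r_iv_i]$; your functional and projection arguments then go through unchanged. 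Since the rest of the paper works with isolated fixed points, your version is adequate for every application that cites this proposition, but the statement itself allows non-isolated $F$, so the more careful bookkeeping is cleaner.
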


\section{Stable envelopes for isolated fixed points} \label{s:env}
In this section we recall the definition of the $K$-theoretic stable envelope introduced in \cite{OS,O2} (see also \cite{SZZ,AMSS,RTV} for
special cases) in the case of isolated fixed points. 
In \old{the} appendix \red{\hyperref[s:Ok]{A}} we give \red{a} rigorous proof that such classes are unique (proposition \ref{pro:uniq}). Using \old{the} Okounkov's definition it is true only for a general enough slope. We introduce \red{a weaker version of the axioms} \old{correction in the definition} to bypass this problem.

We use the following notations and \red{assumptions}
\begin{itemize}
	\item $\T \simeq \C^r$ is an algebraic torus.
	\item $(X,\omega)$ is a smooth symplectic $\T$-variety.
	\item $A \subset \T$ is a subtorus which preserves the symplectic form.
	\item We assume that the fixed point set $X^A$ is finite (\red{this} implies that $X^A=X^\T$).
	\item We assume that $\omega$ is an eigenvector of $\T$ \red{and} denote its character by \hbox{$h \in \Hom(\T,\C^*)$.} 

	\item $\mathfrak{C} \subset \mathfrak{a}$ is a weight chamber.
	\item For a fixed point $F \in X^A$\red{,} we denote by $X_F^+$ its positive BB-cell \old{depending (only) on} \red{according to} the chamber $\mathfrak{C}$ (cf. \red{definition \ref{def:BB} and} proposition \ref{lem:chamb}).
	\item We denote by $\ge$ the partial order on the fixed \red{point} set \red{$X^A$}
	 induced by the chamber~$\mathfrak{C}$ (cf. theorem \ref{tw:BB} (6)).
	\item For a fixed point $F \in X^A$\red{, let $$\nu_F:=\nu(F\subset X) \simeq TX_{|F} \,,$$
	be the normal bundle to the inclusion $F\subset X$.
	Let $\nu_F=\nu_F^+\oplus\nu_F^-$ be its decomposition}
	induced by the chamber $\mathfrak{C}$. (cf. proposition \ref{lem:chamb}).
	\item $T^{1/2} \in K^\T(X)$ is a polarization  (cf. definition \ref{df:pol}).
	\item For a fixed \red{point} \old{set
	component} $F\in X^A$\red{,} we denote by $T^{1/2}_{F,>0} \in K^\T(F)$ the attracting part 
	of
	$(T^{1/2})_{|F} \in K^\T(F)$ (cf. proposition \ref{lem:attr}). 
	\item $s \in Pic(X) \otimes \Q$ is a rational $A$-linearisable line bundle which we call slope.
	\red{\item For a fixed point $F\in X^A$, we denote by $1_F$ the multiplicative unit in the ring $K^\T(F)$ given by the class of the equivariant structure sheaf.}
\end{itemize}

Moreover\red{,} we assume that the variety $X$
satisfies the following condition:
\begin{enumerate}[$(\star)$]
	\item
	The set $\bigsqcup_{F \in X^A} X^+_{F}$ is closed.
\end{enumerate}
\begin{rem} 
	The  $(\star)$ condition \red{implies} that for any fixed point $F_0\in X^A$ the set $\bigsqcup_{F\in X^A, F\le F_0} X^+_{F}$ is closed.
\end{rem}
In \old{the} Okounkov's papers stronger condition
\red{on $X$ is assumed. Namely it is required that $X$ admits an}
 equivariant proper map to an affine variety cf. \cite{OM} paragraph 3.1.1.
Existence of such \red{a} map implies the condition $(\star)$ cf.  \cite{OM} lemma 3.2.7.
It turns out that the condition~$(\star)$ is sufficient to prove uniqueness of the stable envelope (cf. proposition~\ref{pro:uniq}).

\begin{adf} [cf. chapter 2 of \cite{OS}] \label{df:env}
	The stable envelope is a morphism of \hbox{$K^\T(pt)$-modules}
	$$Stab_{\mathfrak{C},T^{1/2}}^s: K^\T(X^A) \to K^\T(X)$$
	satisfying three properties:
	\begin{enumerate}[{\bf a)}]
		\item For any fixed point $F$
		$$\supp\left(Stab_{\mathfrak{C},T^{1/2}}^s(1_F)\right) \subset \bigsqcup_{F' \le F} X^+_{F'} \,.$$
		\item For any fixed point $F$
		$$Stab_{\mathfrak{C},T^{1/2}}^s(1_F)_{|F}=
		eu(\nu^-_F)\frac{(-1)^{\rank T^{1/2}_{F,>0}}}{\det T^{1/2}_{F,>0}} h^{\frac{1}{2}\rank T^{1/2}_{F,>0}} \,.$$
		\item Choose any $A$-linearisation of the slope $s$. For a pair of fixed points $F',F$ such that $F'<F$
		we demand \red{containment of Newton polytopes}
		$$N^A\left(Stab_{\mathfrak{C},T^{1/2}}^s(1_F)_{|F'}\right)+s_{|F}
		\subseteq
		\left(N^A(eu(\nu^-_{F'})) \setminus \{0\}\right)-\det T^{1/2}_{F',>0} +s_{|F'},$$
		where the Newton polytopes are defined as in \old{the}
		proposition \ref{lem:N(a)}.
		An addition
		of \red{the restriction of} a line bundle \red{means} translation by its character.
\end{enumerate}
\end{adf}
For a comparison of the above definition with the one given in \cite{OS,O2} see appendix~\red{\hyperref[s:Ok]{A}}.
\begin{rem} 
To define the element $h^{\frac{1}{2}\rank T^{1/2}_{F,>0}}$ in the axiom {\bf b)} one may need to pass to the double cover of \red{the} torus $\T$ (cf. paragraph 2.1.4 in \cite{OS}). To avoid this problem we consider \red{normalized version of the stable envelope (see the expression (\ref{wyr:stab}) below).} \old{the morphisms $h^{-\frac{1}{2}\rank T^{1/2}_{F,>0}}Stab_{\mathfrak{C},T^{1/2}}^s$.}
\end{rem}
\begin{rem} \label{rm:uni}
	\old{The} Okounkov's definition differs from the one given above in the axiom {\bf c)}. In the paper \cite{OS} weaker set containment is required 
		$$N^A\left(Stab_{\mathfrak{C},T^{1/2}}^s(\red{1_F})_{|F'}\right)+s_{|F}
	\subseteq
	N^A(eu(\nu^-_{F'})) -\det T^{1/2}_{F',>0} +s_{|F'}.$$
	It defines the stable envelope uniquely only for \red{a} general enough slope (cf. paragraph 2.1.8 in \cite{OS}, proposition 9.2.2 in \cite{O2} and example \ref{ex:uni}). With our \red{version of axioms} \old{correction} the stable envelope is unique for any choice of slope and coincides with \old{the} Okounkov's\old{one} for general enough slope. \old{(namely such that $s_{|F}-s_{|F'}$ is not an integral point)}
\end{rem}

	For simplicity\red{,} we omit the weight chamber and polarization in the notation. The stable envelope is determined by the set of elements
	\begin{align} \label{wyr:stab}
	Stab^\red{s}(F) :=h^{-\frac{1}{2}\rank T^{1/2}_{F,>0}}Stab(1_F), 
	\end{align}
	indexed by \red{the fixed point set $X^A$.}
	It leads to the following equivalent definition.
	\begin{adf} \label{def:ele}
		 The $K$-theoretic stable envelope is a set of elements $Stab^\red{s}(F)\in K^\T(X)$ indexed by \red{the fixed point set $X^A$}, such that
			\begin{enumerate}[{\bf a)}]
			\item For any fixed point $F$ 
			$$\supp(Stab^\red{s}(F)) \subset \bigsqcup_{F' \le F} X^+_{F'} \,.$$
			\item For any fixed point $F$
			$$Stab^\red{s}(F)_{|F}=
			eu(\nu^-_F)\frac{(-1)^{\rank T^{1/2}_{F,>0}}}{\det T^{1/2}_{F,>0}} \,.$$
			\item Choose any $A$-linearisation of the slope $s$. For a pair of fixed points  $F',F$ such that $F'<F$ we demand  \red{containment of Newton polytopes} 
			$$N^A\left(Stab^\red{s}(F)_{|F'}\right)+s_{|F}
			\subseteq
			\left(N^A(eu(\nu^-_{F'}))\setminus\{0\}\right)-\det T^{1/2}_{F',>0} +s_{|F'}.$$ 
		\end{enumerate}
	\end{adf}

\begin{pro} \label{pro:uniq}
	\old{With} \red{Under the} assumptions given at the beginning of this section the stable envelope (definitions \ref{df:env}, \ref{def:ele}) is unique. 
\end{pro}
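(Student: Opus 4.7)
The plan is to show that the difference $\Delta:=Stab_1^s(F)-Stab_2^s(F)$ of any two candidate classes satisfying definition \ref{def:ele} vanishes identically. By linearity of the axioms, $\Delta$ has support in $\bigsqcup_{F'\le F}X_{F'}^+$, satisfies $\Delta|_F=0$, and obeys the strict Newton polytope containment
$$N^A(\Delta|_{F'})+s|_F\subset\bigl(N^A(eu(\nu^-_{F'}))\setminus\{0\}\bigr)-\det T^{1/2}_{F',>0}+s|_{F'}$$
for every $F'<F$. Enumerate $\{F'\le F\}=\{F^{(0)}=F,F^{(1)},\ldots,F^{(n)}\}$ along a linear extension of the partial order, compatible in the sense that $F^{(i)}<F^{(j)}$ forces $i>j$. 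I will prove $\Delta|_{F^{(i)}}=0$ by induction on $i$; the case $i=0$ is axiom b).

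An ancillary observation used repeatedly is that for each isolated fixed point $F^{(j)}$ the cell $X_{F^{(j)}}^+$ is an equivariant affine space over $F^{(j)}$, so the restriction $K^\T(X_{F^{(j)}}^+)\to K^\T(F^{(j)})$ is an isomorphism; in particular $\Delta|_{F^{(j)}}=0$ promotes to $\Delta|_{X_{F^{(j)}}^+}=0$. For the inductive step at $i\ge 1$, the inductive hypothesis combined with this observation yields $\Delta|_{X_{F^{(j)}}^+}=0$ for all $j<i$. By theorem \ref{tw:BB}(6) a cell $X_{F^{(k)}}^+$ has $F^{(i)}$ in its closure only when $F^{(k)}\ge F^{(i)}$, which by compatibility of the linear order forces $k\le i$. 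Hence in a sufficiently small $\T$-invariant neighborhood $U$ of $F^{(i)}$, the original support condition together with the inductive vanishings reduces $\supp(\Delta|_U)$ to $U\cap X_{F^{(i)}}^+$. Choosing a $\T$-equivariant local model (Sumihiro-type linearisation) identifying $U$ with an open subvariety of $\nu_{F^{(i)}}=\nu^+_{F^{(i)}}\oplus\nu^-_{F^{(i)}}$ so that $X_{F^{(i)}}^+\cap U$ corresponds to the zero section of $\nu^-_{F^{(i)}}$, the pushforward-pullback formula for this closed embedding yields
$$\Delta|_{F^{(i)}}=eu(\nu^-_{F^{(i)}})\cdot b\qquad\text{for some }b\in K^\T(F^{(i)}).$$

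Assume for contradiction that $b\ne 0$. By remark \ref{rem:N} the vertex coefficients of $eu(\nu^-_{F^{(i)}})$ are invertible, so proposition \ref{lem:New}(c) gives $N^A(\Delta|_{F^{(i)}})=N^A(eu(\nu^-_{F^{(i)}}))+N^A(b)$. Setting $\mu:=s|_F-s|_{F^{(i)}}+\det T^{1/2}_{F^{(i)},>0}$, the strict axiom at $F^{(i)}$ (which applies since $F^{(i)}<F$) translates into
$$N^A(eu(\nu^-_{F^{(i)}}))+N^A(b)+\mu\subset N^A(eu(\nu^-_{F^{(i)}}))\setminus\{0\}.$$
Picking any vertex $v_b$ of $N^A(b)$, the translate $N^A(eu(\nu^-_{F^{(i)}}))+v_b+\mu$ is a bounded polytope of the same shape as $N^A(eu(\nu^-_{F^{(i)}}))$ contained inside it, which forces the translation vector $v_b+\mu$ to vanish. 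But then $N^A(eu(\nu^-_{F^{(i)}}))\subset N^A(eu(\nu^-_{F^{(i)}}))\setminus\{0\}$, contradicting proposition \ref{lem:ver} which gives $0\in N^A(eu(\nu^-_{F^{(i)}}))$. Hence $b=0$, the induction closes, and combined with the ancillary observation we conclude $\Delta=0$.

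The hardest part will be the divisibility step, which requires two technical inputs: establishing that $K^\T(X_{F^{(j)}}^+)\to K^\T(F^{(j)})$ is an isomorphism (so that the pointwise inductive hypothesis upgrades to cellular vanishing), and producing a $\T$-equivariant local model near $F^{(i)}$ in which the pushforward-pullback formula can be applied rigorously. Both rely on the interplay between the $(\star)$ condition, the BB-decomposition, and the local linearisation of smooth $\T$-varieties at isolated fixed points. Once divisibility is in hand, the Newton polytope contradiction is purely combinatorial, using only boundedness of $N^A(eu(\nu^-))$ and the strict exclusion of the origin in axiom c).
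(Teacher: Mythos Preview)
Your strategy is the same as the paper's: show that the difference $\Delta$ of two candidate envelopes must vanish by an induction over the BB-cells, extracting an Euler-class factor at each fixed point and deriving a contradiction from the strict polytope axiom. The paper packages this as Lemma~\ref{lem:uniq}, whose inductive invariant is the support containment $\supp(\Delta)\subset\bigsqcup_{F\in Z}X^+_F$; at each step one peels off the open cell of a maximal $F_1\in Z$, writes $\Delta=j_*\alpha$ with $\alpha\in G^\T(\bigsqcup_{F\in Z}X^+_F)$, and uses the cartesian square to get $\Delta|_{F_1}=eu(\nu^-_{F_1})\cdot\alpha|_{F_1}$.

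There is a genuine gap in your support-reduction step. Your inductive hypothesis is only the pointwise vanishing $\Delta|_{F^{(j)}}=0$ (equivalently $\Delta|_{X^+_{F^{(j)}}}=0$) for $j<i$, and you then assert that this ``reduces $\supp(\Delta|_U)$ to $U\cap X^+_{F^{(i)}}$''. But vanishing of the \emph{pullback} of $\Delta$ to each cell does not by itself shrink the support; the paper explicitly warns that $\supp(a)\subset Y_1$ and $\supp(a)\subset Y_2$ need not give $\supp(a)\subset Y_1\cap Y_2$. What you actually need is that some lift $\alpha$ of $\Delta$ to $G^\T$ of the current support has $\alpha|_{X^+_{F^{(j)}}}=0$, and that only follows from $\Delta|_{F^{(j)}}=0$ once $X^+_{F^{(j)}}$ is already \emph{open} in that support --- precisely the invariant you have not set up. The fix is to carry $\supp(\Delta)\subset\bigsqcup_{j\ge i}X^+_{F^{(j)}}$ through the induction, as the paper does. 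Separately, no ``Sumihiro-type linearisation'' is needed (and Sumihiro's theorem does not provide one): once $X^+_{F^{(i)}}\cap U$ is closed in $U$, the self-intersection formula for this smooth embedding already yields the Euler-class factor, since by theorem~\ref{tw:BB} the normal bundle at $F^{(i)}$ is $\nu^-_{F^{(i)}}$.
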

	
The simplest example of a symplectic variety is \red{the} cotangent \red{bundle}
to a smooth variety.
It is natural to ask whether such a variety satisfies the assumptions needed to define $K$-theoretic stable envelope.
Consider a smooth \red{$A$-variety $M$}
with \red{a} finite number 
of fixed points. Consider the cotangent variety $X=T^*M$ with the action of the torus $\T=A \times \C^*$ such that $\C^*$ acts on the fibers by scalar multiplication.
	The fixed \red{point} set
	 of this action is finite. In fact\red{,} we have \red{equalities}
	$$X^{\T}=X^A=M^A.$$
	The variety $X$ is equipped with the canonical symplectic nondegenerate form $\omega$. This form is preserved by the torus $A$ and it is an eigenvector of the torus $\T$ with character corresponding to \red{the} projection on the second factor. Denote this character by $y$.
	The subbundle $TM \subset TX$ satisfies the polarization condition (see definition \ref{df:pol}). Choose a weight chamber $\mathfrak{C}$ of the torus $A$ and a one dimensional subtorus $\sigma \subset A$ such that $\sigma \in \mathfrak{C}$. The BB-cells of $\sigma$ in $X$ are the conormal bundles to the BB-cells of $\sigma$ in $M$.
	The condition $(\star)$ means that the set
	$$\bigcup\limits_{F \in M^A} \nu^*(M^+_F)$$
	is a closed subset of $T^*M$. This condition is not always satisfied.
	\begin{rem}
		Consider a smooth manifold $M$ decomposed as \red{a} disjoint union
		of smooth locally closed submanifolds
		$$M=\bigsqcup_i S_i .$$
		\red{The disjoint union}
		of the conormal bundles to strata is a closed subset of the cotangent bundle $T^*M$ if and only if the decomposition satisfies the Whitney condition (A) (cf.
		\cite{Whcond} exercise 2.2.4).
		 Thus\red{,} the $(\star)$ condition can be seen as \red{an} algebraic counterpart of the Whitney condition~(A).
	\end{rem}
	\begin{ex}
		Consider the projective space $\PP^2$ with the action of \red{the} diagonal torus of $GL_3(\C)$. Choose a general enough subtorus $\sigma$. Let $x$ be the middle fixed point in the Bruhat order. Then the cotangent bundle to \red{the} blow-up $T^*(Bl_x\PP^2)$ \red{does not} satisfy the $(\star)$ condition.
	\end{ex}
		\begin{ex}
			\begin{enumerate}
				\item The cotangent bundle to any  flag variety $G/B$ admits an equivariant projective morphism to an affine variety (cf. \cite{ChGi} section 3.2) 
				so it satisfies $(\star)$ condition.
				\item The cotangent bundle to any $GL_n$ homogenous variety is a Nakajima quiver variety (cf. \cite{Nak} section 7) so it admits an equivariant projective morphism to an affine variety (\cite{Nak2} section 3, \cite{GiNak} section 5.2).
			\end{enumerate}
	\end{ex}
	We introduce a stronger condition on the BB-cells of $M$ which \red{implies}
	the ($\star$) condition for $X=T^*M$ and is satisfied by the homogenous varieties (cf. theorem \ref{tw:prod}).
	\begin{adf}\label{df:prd}
		Consider a projective smooth variety $M$ \red{equipped} with an action of a torus $A$ with \red{a} chosen one dimensional subtorus $\sigma$. Suppose that the fixed \red{point} set
		$M^\sigma$ is finite.
		We say that $M$ satisfies the local product condition
		if for any fixed point $x \in M^\sigma$ there exist an $A$-equivariant Zariski open neighbourhood $U$ of $x$ and  $A$-variety $Z_x$ such that:
		\begin{enumerate}
			\item There \red{exists}
			 an $A$-equivariant isomorphism $$\theta: U \simeq (U\cap M_x^+)\times Z_x \,. $$
			\item For any fixed point $y \in M^\sigma$ there \red{exists} a subvariety $\red{Z'_{x,y}}\subset Z_x$ such that $\theta$ induces isomorphism:
			$$U\cap M_y^+ \simeq (U\cap M_x^+)\times \red{Z'_{x,y}} \,.$$
		\end{enumerate}	
	\end{adf}

	\begin{pro}
		Suppose that a projective smooth \red{$\C^*$}-variety $M$
		 satisfies the local product condition. Then the cotangent variety $T^*M$ satisfies the $(\star)$ condition.
	\end{pro}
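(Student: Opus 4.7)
The plan is to verify the closedness of
\[
S := \bigsqcup_{F \in M^{\C^*}} \nu^*(M_F^+) \;\subset\; T^*M
\]
by a direct local limit argument. Since closedness is a local property, I fix a convergent sequence $p_n \to p$ with $p_n \in S$ and aim to show $p \in S$. As $M^{\C^*}$ is finite, after passing to a subsequence all $p_n$ lie in a single piece $\nu^*(M_y^+)$ for a common fixed point $y$. Letting $m = \pi(p)$ for the projection $\pi \colon T^*M \to M$, we have $m \in \overline{M_y^+}$, and the BB-decomposition of the projective variety $M$ (theorem \ref{tw:BB}) supplies a fixed point $y'' \le y$ with $m \in M_{y''}^+$. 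The goal reduces to proving $p \in \nu^*(M_{y''}^+)$.

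To this end I invoke the local product condition at $y''$: it supplies an equivariant open neighbourhood $U \ni y''$ together with an isomorphism
\[
\theta \colon U \;\xrightarrow{\sim}\; (U \cap M_{y''}^+) \times Z_{y''}
\]
that sends $U \cap M_z^+$ onto $(U \cap M_{y''}^+) \times Z'_{y'',z}$ for every fixed point $z$. Applying the condition with $z = y''$, a dimension count together with the irreducibility of the affine cell $U \cap M_{y''}^+$ forces $Z'_{y'',y''}$ to consist of a single point $z_0 \in Z_{y''}$. The key computation is then the conormal bundle of a factor-times-subvariety in a product: at each point of $(U \cap M_{y''}^+) \times Z'_{y'',y}$,
\[
\nu^*\bigl((U \cap M_{y''}^+) \times Z'_{y'',y} \,\subset\, U\bigr) \;=\; \{0\} \,\oplus\, \mathrm{pr}_{Z_{y''}}^* \,\nu^*(Z'_{y'',y} \subset Z_{y''}).
\]
Writing $p_n = (a_n, b_n;\, 0, \beta_n)$ in the induced splitting of $T^*U$ and passing to the limit yields $p = (a, z_0;\, 0, \beta)$. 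Since $\nu^*(\{z_0\} \subset Z_{y''})|_{z_0}$ is the full cotangent space $T^*_{z_0} Z_{y''}$, the covector $\beta$ automatically qualifies, so $p \in \nu^*(M_{y''}^+ \cap U \,\subset\, U) \subset S$.

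The main technical point is the conormal computation inside the product chart and the observation that the conormal of a single-point subvariety is the entire cotangent space; the rest is a routine limit argument made possible by finiteness of $M^{\C^*}$ and the BB-closure order from theorem \ref{tw:BB}. It is essentially this collapse of $Z'_{y'',y''}$ to a point that converts the product-chart normal data of the higher stratum into a conormal vector for the limiting stratum, which is precisely what closedness of $S$ requires.
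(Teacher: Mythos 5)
Your proof is correct and follows essentially the same strategy as the paper: both work in the product chart at the lower stratum point $y''$ (resp.\ $F$ in the paper) and exploit the fact that, in the splitting $T^*U \simeq T^*(U\cap M_{y''}^+) \oplus T^*Z_{y''}$, the conormal bundle of every stratum meeting the chart has vanishing first component, so this persists in the limit. The paper packages this intrinsically by observing that $\nu^*(M_{F_0}^+\cap U)$ sits inside the closed subbundle $E = M_F^+ \times T^*Z \subset T^*U$ with $E_{|M_F^+} = \nu^*(M_F^+\cap U)$, rather than via your sequence/coordinate computation (and it leaves the fact that $Z'_{x,x}$ is a point unjustified, which you verify), but the underlying geometry is identical.
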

	\begin{proof}
		It is enough to prove that for every fixed point $F_0 \in M^\sigma$ there is an inclusion
		$$\overline{\nu^*(M_{F_0}^+\subset M)}\subset \bigsqcup_{F\in M^\sigma}\nu^*(M_{F}^+\subset M) \,. $$
		It is equivalent to claim that for arbitrary fixed points $F_0,F$ 
		$$\overline{\nu^*(M_{F_0}^+\subset M)}\cap T^*M_{|M_F^+}\subset \nu^*(M_{F}^+\subset M) \,. $$
		Denote by $U$ the neighbourhood of $F$ from \red{the} definition of the local product condition. All of the subsets in the above formula are $\sigma$ equivariant. Thus\red{,} it is enough to prove that
		$$\overline{\nu^*(M_{F_0}^+\cap U\subset U)}\cap T^*U_{|M_F^+}\subset \nu^*(M_{F}^+\cap U\subset U) \,. $$
		The \red{local}
		product property implies existence of isomorphisms
		$$
		U\simeq M_F^+\times Z, \ \
		M_F^+ \simeq M_F^+\times \{pt\}, \ \
		M_{F_0}^+ \simeq M_F^+\times Z' \,,
		$$
		for some subvariety $Z'\subset Z$ and point $pt\in Z$. Denote by $E$ the subbundle $$M_F^+\times T^*Z \subset T^*U.$$
		Note that
		\begin{align*}
			&\nu^*(M_{F}^+\cap U\subset U)=E_{|M_{F}^+} \\
			&\nu^*(M_{F_0}^+\cap U\subset U)=M_F^+\times \nu^*(Z'\subset Z) \subset E_{|M^+_{F_0}}
		\end{align*}
		Thus
		\begin{multline*}
		\overline{\nu^*(M_{F_0}^+\cap U\subset U)}\cap T^*U_{|M_F^+}\subset \overline{E_{|M^+_{F_0}}} \cap T^*U_{|M_F^+} \subset
		\\
		\subset E \cap T^*U_{|M_F^+}=E_{|M_F^+}= \nu^*(M_{F}^+\cap U\subset U) \,.
		\end{multline*}
	\end{proof}
	 
	\red{\section{Motivic Chern class}
	The motivic Chern class is defined in \cite{BSY}. The equivariant version is due to \cite[section 4]{AMSS} and \cite[section 2]{FRW}. Here we recall the definition of the torus equivariant motivic Chern class. Consult \cite{AMSS,FRW} for a detailed account.}

		\red{\begin{adf}[after {\cite[section 2.3]{FRW}}]
			Let $A$ be an algebraic torus.
			The motivic Chern class assigns to every $A$-equivariant map of quasi-projective $A$-varieties $f:X \to M$ an element
			$$mC_y^A(f)=mC_y^A(X \xto{f} M) \in G^A(M)[y]$$
			such that the following properties are satisfied
			\begin{description}
				\item[1. Additivity] If a $A$-variety $X$ decomposes as a union of closed and open invariant subvarieties $X=Y\sqcup U$, then $$mC_y^A(X\xto{f} M)=mC_y^A(Y\xto{f_{|Y}} M)+mC_y^A(U\xto{f_{|U}} M)\,.$$
				\item[2. Functoriality] For an equivariant proper map $f:M\to M'$ we have $$mC_y^A(X\stackrel{f\circ g}\to M')=f_*mC_y^A(X\stackrel{g}\to M)\,.$$
				\item[3. Normalization] For a smooth $A$-variety $M$ we have $$mC_y^A(id_M)=\lambda_y(T^*M):=\sum_{i=0}^{\rank T^*M}[\Lambda^iT^*M]y^i \,.$$ 
			\end{description}
		The motivic Chern class is the unique assignment satisfying the above properties. 
\end{adf}}

	\section{Comparison with the motivic Chern classes} \label{s:mC}
	In this section we aim to compare the stable envelopes for \red{the} trivial slope with the motivic Chern classes of BB-cells. 
	Our main results are
\begin{pro} \label{pro:mC}
	Let $M$ be a projective, smooth variety
	\red{equipped} with an action of an algebraic torus $A$. \red{Suppose that the fixed point set $M^A$ is finite.} \old{with a finite number
	 of fixed points.} Consider the variety $X=T^*M$ \red{equipped} with the action of the torus $\T=\C^*\times A$. Choose any weight chamber $\mathfrak{C}$ of the torus $A$, polarization $T^{1/2}=TM$ and the trivial line bundle $\theta$ as a slope.
	Then\red{,} the elements
	$$
	\frac{mC_{-y}^A(M^+_F \to M)}{y^{\dim M_F^+}} \in K^A(X)[y,y^{-1}] \simeq K^\T(X) \simeq K^\T(T^*X)
	$$
	satisfy the axioms {\bf b)} and {\bf c)} of the stable envelope \red{$Stab^\theta(F)$.}
\end{pro}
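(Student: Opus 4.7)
The strategy is to verify axioms \textbf{b)} and \textbf{c)} separately, both by reducing to local computations at the relevant fixed points and then invoking standard properties of $mC_y^A$ on linear $A$-representations.

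For axiom \textbf{b)}, I would restrict at $F$. Since equivariant $K$-theory classes restricted at a fixed point depend only on an $A$-invariant neighborhood, I can pass to a local model in which $M$ is identified with $T_F M$ and $M_F^+$ with the attracting subspace $(T_F M)^+$. The inclusion $(T_F M)^+ \hookrightarrow T_F M$ is a closed embedding of smooth $A$-varieties with normal bundle $(T_F M)^-$, so functoriality and normalization of $mC_y^A$ yield
\[
i_F^*\, mC_y^A(M_F^+\to M) \;=\; \lambda_y\bigl(((T_F M)^+)^*\bigr)\cdot eu\bigl((T_F M)^-\bigr).
\]
Substituting $y\mapsto -y$, dividing by $y^{\dim M_F^+}$, and using the $\T$-equivariant decomposition $\nu_F^- = (T_F M)^-\oplus y\cdot((T_F M)^+)^*$ together with $\det T^{1/2}_{F,>0} = \det(T_F M)^+$, a direct algebraic manipulation transforms $\prod_i(y^{-1}-e^{-w_i^+})$ into $\prod_i(1-y^{-1}e^{w_i^+})\cdot(-1)e^{-w_i^+}$ and matches the prescribed value $eu(\nu_F^-)\,(-1)^{\rank T^{1/2}_{F,>0}}/\det T^{1/2}_{F,>0}$.

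For axiom \textbf{c)}, fix $F'<F$ and set $V := T_{F'} M$. In a linearized neighborhood of $F'$, the restriction reduces to $i_0^*\, mC_{-y}^A(Z\to V)$, where $Z\subset V$ is the $A$-invariant locally closed subvariety corresponding to $M_F^+\cap U_{F'}$; crucially $0\in\overline{Z}\setminus Z$ because $F'\notin M_F^+$. I would bound the $A$-Newton polytope by stratifying $\overline{Z}$ into smooth $A$-invariant locally closed pieces $S_\alpha\subset V$, applying additivity of $mC_y^A$ and Proposition \ref{lem:New} (d), and using that for each smooth stratum through $0$
\[
i_0^*\, mC_y^A(S\to V) \;=\; \lambda_y(T^*S)|_0 \cdot eu(N_{S/V}|_0),
\]
whose $A$-Newton polytope lies in that of $eu(V)=\prod_w(1-e^{-w})$. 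Combining with the identification $eu(\nu_{F'}^-) = eu((T_{F'} M)^-)\cdot eu(y\cdot((T_{F'} M)^+)^*)$ and the translation by $-\det T^{1/2}_{F',>0}$ supplies the target polytope. The origin is excluded because every contributing stratum has positive codimension in $V$ (since $F'\notin M_F^+$), producing a nontrivial Euler class factor that shifts the polytope strictly away from $0$; Proposition \ref{lem:ver} controls this vertex.

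The main obstacle is the treatment of the singular closure in axiom \textbf{c)}: $\overline{M_F^+}\cap U_{F'}$ is typically not smooth, so the stratification argument requires either an $A$-equivariant resolution of $\overline{Z}$ with individually controlled polytopes or an independent bound obtained by functoriality for a proper resolution combined with Proposition \ref{lem:New} (a). Proving not merely that the Newton polytope is contained in the target but that the origin is genuinely excluded — rather than only excludable after cancellation — is the most delicate step, and will rely on Propositions \ref{lem:Npi} and \ref{lem:ver} to pin down the extremal vertices under restriction to a generic one-parameter subgroup in the chamber $\mathfrak{C}$.
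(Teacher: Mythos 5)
Your treatment of axiom \textbf{b)} is essentially the paper's: restrict at $F$, pass to a local model where $M_F^+ \cap U \subset U$ is a closed immersion, apply functoriality and normalization to get $\lambda_{-y}(\tilde\nu_F^{+*})\,eu(\tilde\nu_F^-)$, and then an elementary algebraic identity (the paper's Lemma \ref{lem:comp}) matches the required form. That part is fine.

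For axiom \textbf{c)}, you diverge from the paper and there is a genuine gap in the most delicate step. For the polytope containment the paper simply cites Theorem 4.2 of \cite{FRW}, which gives $N^A\!\left(mC_y^A(M_F^+\to M)_{|F'}\right)\subseteq N^A(eu(\tilde\nu_{F'}))$ directly; your stratification-and-additivity route is plausible but, as you yourself note, incomplete without a resolution or a cited theorem --- in particular additivity via Proposition \ref{lem:New}(d) only gives a \emph{convex hull} bound, and the smooth-stratum-through-$0$ formula does not cover strata that have $0$ in their closure but do not contain it, which is precisely the generic situation. More seriously, your argument that the origin is excluded is incorrect: a nontrivial Euler class factor does \emph{not} shift the polytope away from $0$. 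For any vector bundle $N$ with nonzero $A$-weights, $eu(N)=\prod_i(1-e^{-w_i})$ has constant $A$-term equal to $1$, so $0$ is always a vertex of $N^A(eu(N))$ (this is exactly Remark \ref{rem:N}). Thus ``positive codimension $\Rightarrow$ nontrivial Euler class $\Rightarrow$ polytope shifted off $0$'' fails.

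The paper's actual mechanism for excluding the origin is a limit argument that you do not reproduce: after passing to a generic one-parameter subgroup $\sigma\in\mathfrak C$ (Proposition \ref{lem:Npi}), Theorem 4.2 of \cite{FRW} gives
\[
\lim_{\xi\to 0}\left.\frac{mC_y^A(M_F^+\subset M)_{|F'}}{eu(\tilde\nu_{F'})}\right|_\sigma=\chi_y(M_F^+\cap M_{F'}^+)=\chi_y(\varnothing)=0\,,
\]
and the vanishing (which uses only that distinct BB-cells are disjoint) forces the minimum of the line segment $N^\sigma(mC|_\sigma)$ to sit \emph{strictly} above that of $N^\sigma(eu(\tilde\nu_{F'})|_\sigma)$; Proposition \ref{lem:ver} identifies the latter minimum with $\pi_\sigma(-\det T^{1/2}_{F',>0})$, hence the distinguished point is excluded. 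You invoke Propositions \ref{lem:Npi} and \ref{lem:ver}, which are the right supporting tools, but the decisive ingredient --- the vanishing $\chi_y(M_F^+\cap M_{F'}^+)=0$ from the limit of $mC/eu$ --- is missing, and without it the proof of (\ref{wyr:Npt}) does not go through.
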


\begin{rem}
	In this proposition we \red{do not} assume that $M$ \red{satisfies} the local product condition or even that $X$ satisfies the $(\star)$ condition.
\end{rem}

\begin{atw} \label{tw:mC}
	Consider the situation such as in proposition \ref{pro:mC}.
	Suppose that the variety $M$ with the action of a one dimensional torus $\sigma \in \mathfrak{C}$ satisfies the local product condition
	(definition \ref{df:prd}). Then \red{the element}
	$$
	\frac{mC_{-y}^A(M^+_F \to M)}{y^{\dim M_F^+}} \in K^A(X)[y,y^{-1}] \simeq K^\T(X) \simeq K^\T(T^*X)
	$$
	\old{determine}\red{is equal to} the $K$-theoretic stable envelope \red{$Stab^\theta(F)$.} \old{$y^{-\frac{1}{2}\dim M_F^+}Stab^\theta_{\mathfrak{C},T^{1/2}}(F)$.}
\end{atw}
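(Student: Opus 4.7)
The plan is to invoke uniqueness of the stable envelope. The local product condition on $M$, combined with the proposition immediately following Definition \ref{df:prd}, guarantees that $X = T^*M$ satisfies $(\star)$, so Proposition \ref{pro:uniq} applies and the stable envelope for trivial slope $\theta$ is the unique element of $K^\T(T^*M)$ satisfying axioms a), b), c) of Definition \ref{def:ele}. Proposition \ref{pro:mC} already establishes axioms b) and c) for
$$\alpha_F := \frac{mC^A_{-y}(M^+_F \to M)}{y^{\dim M^+_F}} \in K^\T(T^*M),$$
so the whole theorem reduces to verifying the support axiom a):
$$\supp(\alpha_F) \subset \bigsqcup_{F' \le F}\nu^*(M^+_{F'} \subset M).$$

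To establish the support condition, I would argue by induction on the BB-order on $M^A$, using additivity of the motivic Chern class. By Theorem \ref{tw:BB}(6) the closure $\overline{M^+_F}$ is stratified by the lower BB-cells $M^+_{F'}$ for $F' \le F$, so additivity gives
$$mC^A_{-y}(\overline{M^+_F} \to M) = \sum_{F' \le F} mC^A_{-y}(M^+_{F'} \to M).$$
Hence $\alpha_F$ differs from $mC^A_{-y}(\overline{M^+_F} \to M)/y^{\dim M^+_F}$ by terms whose supports, by the inductive hypothesis, already lie inside $\bigsqcup_{F'' \le F'}\nu^*(M^+_{F''}) \subset \bigsqcup_{F' \le F}\nu^*(M^+_{F'})$. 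It therefore suffices to show that $mC^A_{-y}(\overline{M^+_F} \to M)/y^{\dim M^+_F}$, regarded as a class in $K^\T(T^*M)$, is supported on $\bigsqcup_{F' \le F}\nu^*(M^+_{F'} \subset M)$.

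For this step the local product condition is essential. Near each fixed point $F' \le F$, the isomorphism $U \simeq (U \cap M^+_{F'}) \times Z_{F'}$ with $U \cap M^+_F \simeq (U \cap M^+_{F'}) \times Z'_{F',F}$ factors the motivic Chern class as an external product. The $M^+_{F'}$-factor equals $\lambda_{-y}(T^*(U \cap M^+_{F'}))$, which under the identification $K^A(M)[y,y^{-1}] \simeq K^\T(T^*M)$ (via the Koszul resolution of the zero section) contributes, after normalization by $y^{\dim M^+_{F'}}$, the class of the zero section of $T^*(U \cap M^+_{F'})$. The $Z_{F'}$-factor contributes a class on $T^*Z_{F'}$ whose support, by the analogous inductive analysis applied inside $Z_{F'}$, lies in the union of conormals to the BB-strata of $\overline{Z'_{F',F}}$. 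The external product is therefore supported in $\bigsqcup_{F' \le F}\nu^*(M^+_{F'}) \cap T^*U$. Since support is a closed condition that can be checked on the open cover $\{U\}_{F' \in M^A} \cup \{M \setminus M^A\}$, with $\alpha_F|_{M \setminus M^A}$ having unambiguous support because $mC^A_{-y}(M^+_F \to M)$ is supported on $\overline{M^+_F}$ at the level of $M$, the global support statement follows, and Proposition \ref{pro:uniq} yields $\alpha_F = Stab^\theta(F)$.

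The main obstacle is the $K$-theoretic analog of the characteristic-cycle identity which underlies the third paragraph: one must verify that the motivic Chern class of a smooth subvariety $Y \subset M$ corresponds, under $K^A(M)[y,y^{-1}] \simeq K^\T(T^*M)$, to a class supported on the conormal bundle $\nu^*(Y \subset M)$. Cohomological analogs of this identity appear in \cite{AMSS0,FR,RV}, and the $K$-theoretic version for flag varieties in \cite{AMSS,FRW}. In our generality it is precisely the local product condition that allows the local external-product decompositions to glue into a coherent global support statement on $T^*M$.
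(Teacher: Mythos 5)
Your reduction is exactly the paper's: uniqueness (Proposition \ref{pro:uniq}) plus the axioms \textbf{b)} and \textbf{c)} from Proposition \ref{pro:mC} leave only the support axiom \textbf{a)}, and the $(\star)$ condition needed for uniqueness follows from the local product hypothesis. The paper's treatment of the support axiom, however, is structured quite differently, and your version contains a genuine gap.

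The key problem is the sentence ``support is a closed condition that can be checked on the open cover $\{U\}_{F'\in M^A}\cup\{M\setminus M^A\}$.'' Recall that $\supp(a)\subset Y$ means $a$ vanishes in $G^\T(X\setminus Y)$. Vanishing in equivariant $G$-theory is \emph{not} a local condition: for an open cover $X\setminus Y=V_1\cup V_2$, the Mayer--Vietoris sequence
\[
G^\T_1(V_1\cap V_2)\longrightarrow G^\T_0(X\setminus Y)\longrightarrow G^\T_0(V_1)\oplus G^\T_0(V_2)
\]
has a nontrivial kernel in general, so $a|_{V_1}=0$ and $a|_{V_2}=0$ do not imply $a|_{X\setminus Y}=0$. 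Your local external-product computations near each fixed point are in the right spirit, but they cannot be glued into the global support containment in the way you propose. There is a second, milder issue: the internal induction ``inside $Z_{F'}$'' applies the theorem (or at least its support step) to a variety $Z_{F'}$ on which neither a BB-decomposition nor the local product hypothesis is supplied, so that reduction is not available as stated.

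The paper sidesteps both problems with Lemma \ref{lem:supp}: it runs an induction that peels off one BB-cell at a time from the closed set $\bigcup_{F'\le F}T^*M|_{M^+_{F'}}$, at each stage using the exact sequence of a closed immersion (not an open cover) together with Proposition \ref{cor:K} for the reducible intermediate set $E_t\cup V$, and replaces your gluing step by a pointwise \emph{divisibility} criterion: $\lambda_{-y}(T^*M^+_{F'})_{|F'}$ must divide $a_{|F'}$ for every fixed point $F'$. Your local product computation
\[
mC^A_{-y}(M^+_F\to M)_{|F'}=mC^A_{-y}(Z'_{F',F}\subset Z_{F'})_{|F'}\cdot\lambda_{-y}(T^*M^+_{F'})_{|F'}
\]
is exactly what verifies this divisibility in the paper, so the ingredient is essentially present in your write-up; what is missing is the precise mechanism (the stratified induction in $T^*M$ via closed-immersion exact sequences, rather than an open cover) that turns it into the desired global containment. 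Also, the reduction to $mC^A_{-y}(\overline{M^+_F}\to M)$ via additivity presupposes that $\overline{M^+_F}$ is a union of BB-cells, which is not given by Theorem \ref{tw:BB}(6) alone and is not actually needed in the paper's argument.
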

\red{
Our main examples of varieties satisfying the local product property are homogenous spaces (see appendix \red{\hyperref[s:G/P]{B}}). Let $G$ be a reductive, complex Lie group with a chosen maximal torus $A$. Let $B$ be a Borel subgroup and $P$ a parabolic subgroup. We consider the action of the torus $A$ on the variety $G/P$.}

\red{
A choice of weight chamber $\mathfrak{C} \subset\mathfrak{a}$ induces a choice of Borel subgroup $B_\mathfrak{C}\subset G$. Let $F\in (G/P)^A$ be a fixed point.  It is a classical fact that the BB-cell $(G/P)_F^+$ (with respect to the chamber $\mathfrak{C}$)
coincides with the $B_\mathfrak{C}$-orbit of $F$. These orbits are called Schubert cells.
}
\begin{cor}
	\red{In the situation presented above} the stable envelopes for \red{the} trivial slope are equal to the motivic Chern classes of Schubert cells
	$$\frac{mC_{-y}^A((G/P)^+_F \to G/P)}{y^{\dim (G/P)^+_F}} 
	=y^{-\frac{1}{2}\dim (G/P)^+_F}Stab^\theta_{\mathfrak{C},T(G/P)}(1_{F}).$$
\end{cor}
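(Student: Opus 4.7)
The plan is to derive the corollary as a direct specialization of Theorem \ref{tw:mC} to the homogeneous setting, so the real task is to verify the hypotheses of that theorem for $M = G/P$ and to match the notation. The variety $G/P$ is smooth and projective; its $A$-fixed point set is finite (classically indexed by $W/W_P$, where $W$ and $W_P$ are the Weyl groups of $G$ and $P$); the weight chamber $\mathfrak{C}$, the polarization $T^{1/2} = T(G/P)$, and the trivial slope $\theta$ are all prescribed in the statement. The only substantive hypothesis still to verify is the local product condition (Definition \ref{df:prd}) for some one-dimensional subtorus $\sigma \in \mathfrak{C}$.

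For the notational matching, I would invoke the classical fact, stated just before the corollary, that the positive BB-cell $(G/P)_F^+$ with respect to $\mathfrak{C}$ coincides with the $B_{\mathfrak{C}}$-orbit of $F$, i.e.\ with the Schubert cell through $F$. This rewrites the left-hand side of the claimed equality as exactly the expression $mC_{-y}^A(M_F^+ \to M)/y^{\dim M_F^+}$ appearing in Theorem \ref{tw:mC}. Granted the local product condition, Theorem \ref{tw:mC} then produces the equality of this class with $y^{-\frac{1}{2}\dim (G/P)_F^+} Stab^\theta_{\mathfrak{C}, T(G/P)}(1_F)$ with no further manipulation.

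The genuine technical content therefore lies outside the proof of the corollary itself: one must show that $G/P$ satisfies the local product condition. This is the statement of Theorem \ref{tw:prod} in appendix \hyperref[s:G/P]{B}, and it is where I expect the main obstacle to sit. The natural strategy is to use the opposite big cell centered at each $A$-fixed point $F$: it furnishes an $A$-equivariant affine open neighbourhood $U$ of $F$ that splits as a product of the attracting Schubert cell $U \cap (G/P)_F^+$ with a complementary $A$-invariant slice $Z_F$, and one must check that every other Schubert cell $U \cap (G/P)_{F'}^+$ meets $U$ as the product of the same attracting factor with a subvariety of $Z_F$. This is where the Lie-theoretic input (root decomposition and the structure of unipotent radicals) enters; once this local product structure is established, the corollary follows with no further calculation.
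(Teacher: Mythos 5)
Your proposal matches the paper's proof exactly: identify the Schubert cells with the BB-cells, invoke Theorem \ref{tw:prod} to verify the local product condition for $G/P$, and then apply Theorem \ref{tw:mC}. Your sketch of the proof of Theorem \ref{tw:prod} via the opposite big cell is also the route the appendix takes.
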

\begin{proof}
	Theorem \ref{tw:prod} \red{implies}
	 that homogenous varieties satisfy the local product condition.
	 Thus, the corollary follows from theorem \ref{tw:mC}.
\end{proof}

\begin{rem}
	In the case of flag varieties $G/B$\red{,} our results for \red{the} trivial slope agree with the previous results of \cite{AMSS} (theorem 7.5) for a small anti-ample slope 
	up to \red{a} change of $y$ to $y^{-1}$. \red{This} difference is a consequence of the fact that in \cite{AMSS} the inverse action of $\C^*$ on the fibers of cotangent bundle is considered.
\end{rem}

Before the proof of theorem \ref{tw:mC} we make several simple observations.
	Let $\tilde{\nu}_F \simeq TM_{|F}$  denote the normal space to the fixed point $F$ in $M$.
	Denote by $\tilde{\nu}^-_F$ and $\tilde{\nu}^+_F$ its decomposition into the positive and negative part induced by the weight chamber $\mathfrak{C}$.
	Let
	$$\nu_F\red{\simeq TX_{|F}} \simeq TM_{|F}\oplus (T^*M_{|F}\red{\otimes \C_y})$$
	 denote the  normal space to the fixed point $F$ in the variety $X=T^*M$. It is a straightforward observation that
	\begin{align*}
	&	\nu_F^-\simeq\tilde{\nu}^-_F\oplus y(\tilde{\nu}^+_F)^* \red{\,,} \\
	&	\nu_F^+\simeq\tilde{\nu}^+_F\oplus y(\tilde{\nu}^-_F)^* \red{\,,} \\
	&T^{1/2}_{F,>0}= \tilde{\nu}^+_F.
	\end{align*} 

	In the course of  proofs we use the following computation:
	\begin{alemat} \label{lem:comp}
		Let $V$ be a $\T$-vector space. We have an equality
		$$
		\frac{\lambda_{-1}\left(y^{-1}V\right)}{\det V} =\frac{\lambda_{-y}(V^*)}{(-y)^{\dim V}}
		$$
		in the $\T$-equivariant $K$-theory of a point.
	\end{alemat}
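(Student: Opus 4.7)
The plan is a direct algebraic computation in the equivariant $K$-theory ring $K^\T(pt)$, using only the definition of $\lambda_y$ together with the standard duality for exterior powers of a vector space. Both sides are Laurent expressions in $y$ with coefficients in $K^\T(pt)$, so it suffices to match them term by term after expanding.

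First, I would expand the left hand side via $\Lambda^i(y^{-1}V) = y^{-i}\Lambda^i V$ in $K^\T(pt)$, which gives
$$\frac{\lambda_{-1}(y^{-1}V)}{\det V} = \frac{1}{\det V}\sum_{i=0}^{\dim V}(-1)^i y^{-i}[\Lambda^i V].$$
For the right hand side, I would invoke the classical duality isomorphism $\Lambda^i V^* \simeq \Lambda^{\dim V - i}V \otimes (\det V)^{-1}$ coming from the perfect pairing $\Lambda^i V \otimes \Lambda^{\dim V - i}V \to \det V$. Substituting this into $\lambda_{-y}(V^*) = \sum_i (-y)^i[\Lambda^i V^*]$ and re-indexing by $j := \dim V - i$ yields $(-y)^{\dim V}(\det V)^{-1}\sum_j (-y)^{-j}[\Lambda^j V]$. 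Dividing by $(-y)^{\dim V}$ and simplifying $(-y)^{-j} = (-1)^j y^{-j}$ produces exactly the expansion of the left hand side.

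Since every step is routine bookkeeping in the representation ring of a point, I do not anticipate any real obstacle; the only delicate point is tracking the signs through the re-indexing, which cancels cleanly because $(-y)^{\dim V}\cdot(-y)^{-j}=(-y)^{\dim V - j} = (-y)^i$.
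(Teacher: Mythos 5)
Your proof is correct, and it takes a genuinely different route from the paper. The paper's proof observes that both sides are multiplicative in direct sums, decomposes the $\T$-representation $V$ into one-dimensional weight spaces (which is possible because $\T$ is a torus), and verifies the identity for $\dim V = 1$, where it reduces to the trivial rational-function identity $\frac{1-\alpha/y}{\alpha} = \frac{1-y/\alpha}{-y}$. You instead prove the identity globally by invoking the exterior-power duality $\Lambda^i V^* \simeq \Lambda^{\dim V - i} V \otimes (\det V)^{-1}$ coming from the perfect pairing $\Lambda^i V \otimes \Lambda^{\dim V - i}V \to \det V$, and then re-index; the sign bookkeeping $(-y)^{\dim V}(-y)^{-j} = (-y)^i$ works out as you say. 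The paper's approach is arguably shorter to write down, but it leans on the splitting of $\T$-representations into characters; yours avoids any such decomposition and therefore would transfer verbatim to the statement with $V$ replaced by an arbitrary equivariant vector bundle over a base, which is a mild gain in generality. Both are complete and correct.
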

\begin{proof}
	Both sides of the formula are multiplicative with respect to the direct sums of $\T$-vector spaces. Every $\T$-vector space decomposes as a sum of one dimensional spaces, so it is enough to check the equality for $\dim V=1$. Then it simplifies to trivial form:
	$$\frac{1-\frac{\alpha}{y}}{\alpha}=\frac{1-\frac{y}{\alpha}}{-y} ,$$
	where $\alpha$ is \red{the} character of the action of the torus $\T$ on the linear space $V$.
\end{proof}
\begin{proof}[Proof of proposition \ref{pro:mC}]
	We start the proof by checking the axiom {\bf b)}. We need to show that
	\begin{align*}
		\frac{mC_{-y}^A(M^+_F \to M)_{|F}}{y^{\dim M_F^+}}=
		eu(\nu^-_F)\frac{(-1)^{\rank T^{1/2}_{F,>0}}}{\det T^{1/2}_{F,>0}}	\,.	
	\end{align*}
	which is equivalent to
	
	\begin{align} \label{wyr:b}
	\frac{mC_{-y}^A(M^+_F \to M)_{|F}}
	{(-y)^{\dim \tilde{\nu}^+_F}}=
	eu(\tilde{\nu}^{-}_F)
	\frac{\lambda_{-1}(y^{-1}\tilde{\nu}^{+}_F)}{\det  \tilde{\nu}^+_F} \,.
	\end{align}
	
The BB-cell $M_F^+$ is a locally closed subvariety. 
Choose an open neighbourhood $U$ of the fixed point $F$ in $M$ such that the morphism $M_F^+\cap U \subset U$ is a closed immersion. The functorial properties of the motivic Chern class (cf. paragraph 2.3 of \cite{FRW}, or theorem 4.2 from \cite{AMSS})
 imply that:
\begin{align*}
&mC_{-y}^A\left(M^+_F \xto{i} M\right)_{|F}=
mC_{-y}^A\left(M^+_F \cap U \xto{i} U\right)_{|F}= \\
&=i_*mC_{-y}^A\left(id_{M^+_F \cap U}\right)_{|F}=
i_*\left(\lambda_{-y}\left(T^*(M^+_F \cap U)\right)\right)_{|F}=
\lambda_{-y}\left(\tilde{\nu}^{+*}_F\right)eu(\tilde{\nu}^-_F) \,.
\end{align*}
 
 So the left hand side of expression (\ref{wyr:b}) is equal to
 $$eu(\tilde{\nu}^-_F) \frac{\lambda_{-y}\left(\tilde{\nu}^{+*}_F\right)}{(-y)^{\dim \tilde{\nu}^{+}_F}}.$$
 Lemma \ref{lem:comp} implies that the right hand \red{side}
 is also of this form.
 
 We proceed to the axiom {\bf c)}. Consider a pair of fixed points $F,F'$ such that $F'<F$. We need to prove the inclusion:
 \begin{align} \label{wyr:Ninc}
 	 N^A\left(\frac{mC_{-y}^A(M^+_F \to M)_{|F'}}{y^{\dim M_F^+}}\right)
 	\subseteq
 	N^A(eu(\nu^-_{F'}))-\det T^{1/2}_{F',>0}
 \end{align}
 and take care of the distinguished point
 \begin{align} \label{wyr:Npt}
 	-\det T^{1/2}_{F',>0} \notin N^\red{A}\left(\frac{mC_{-y}^A(M^+_F \to M)_{|F'}}{y^{\dim M_F^+}}\right).
 \end{align}
Let's concentrate on the inclusion  (\ref{wyr:Ninc}).
 There is an equality of polytopes
 $$
 N^A(eu(\nu^-_{F'}))-\det T^{1/2}_{F',>0}=
 N^A\left(eu(\tilde{\nu}^{-}_{\red{F'}})
 \frac{\lambda_{-1}(y^{-1}\tilde{\nu}^{+}_{\red{F'}})}{\det  \tilde{\nu}^+_{\red{F'}}}\right)=
 N^A\left(eu(\tilde{\nu}^{-}_{F'})\lambda_{-y}\left(\tilde{\nu}^{+*}_{F'}\right)\right),
 $$
 where the second equality follows from lemma \ref{lem:comp}.
 After substitution of $y=1$ \red{into} the class $eu(\tilde{\nu}^{-}_{F'})\lambda_{-y}\left(\tilde{\nu}^{+*}_{F'}\right)$ we obtain the class $eu(\tilde{\nu}_{F'})$.
 Thus, proposition \ref{lem:New} (e) implies that \old{there is an inclusion}
 $$
 N^A(eu(\tilde{\nu}_{F'})) \subseteq N^A\left(eu(\tilde{\nu}^{-}_{F'})\lambda_{-y}\left(\tilde{\nu}^{+*}_{F'}\right)\right)\,.
 $$
Moreover
$$N^A\left(\frac{mC_{-y}^A(M^+_F \to M)_{|F'}}{y^{\dim M_F^+}}\right)= N^A\left(mC_{-y}^A(M^+_F \to M)_{|F'}\right)=
N^A\left(mC_{y}^A(M^+_F \to M)_{|F'}\right) \,.$$
Theorem 4.2 from \cite{FRW} implies that there is an inclusion 
$$
N^A\left(mC_{y}^A(M^+_F \to M)_{|F'}\right) \subseteq
N^A(eu(\tilde{\nu}_{F'})) \,.
$$
To conclude\red{,} we have proven inclusions
\begin{multline*}
N^A\left(mC_{y}^A(M^+_F \to M)_{|F'}\right) \subseteq
N^A(eu(\tilde{\nu}_{F'}))\subseteq \\
\subseteq N^A\left(eu(\tilde{\nu}^{-}_{F'})\lambda_{-y}\left(\tilde{\nu}^{+*}_{F'}\right)\right)=
N^A(eu(\nu^-_{F'}))-\det T^{1/2}_{F',>0} \,.
\end{multline*}

The next step is the proof of the formula (\ref{wyr:Npt}). We proceed in a manner similar to the proof of corollary 4.5 in \cite{FRW}. Consider a general enough one dimensional subtorus $\sigma\in \mathfrak{C}$. Proposition \ref{lem:Npi} implies that 
\begin{align} \label{wyr:gens}
N^\sigma\left(mC_{y}^A(M^+_F \to M)_{|F'}|_\sigma\right)=\pi_\sigma\left(N^A\left(mC_{y}^A(M^+_F \to M)_{|F'}\right)\right).
\end{align}

Theorem 4.2 from \cite{FRW} (cf. also theorem 10 from \cite{WeBB}), with the limit in $\infty$ changed to the limit in $0$ to get positive BB-cell instead of negative one, implies that 
$$
\lim_{\xi\to 0}\left(\left.\frac{mC_y^A(M_F^+\subset M)_{|F'}}{eu(\red{\tilde{\nu}}_{F'})}\right|_\sigma\right)=\;\chi_y(M_F^+\cap M^+_{F'})=\chi_y(\varnothing)=0\,,
$$
	\red{where $\xi$ is the chosen primitive character of the torus $\sigma$} and the class $\chi_y$  is the Hirzebruch genus (cf. \cite{chiy, BSY}).

Thus\red{,} the lowest term of line segment $N^\sigma\left(mC_{y}^A(M^+_F \to X)_{|F'}|_\sigma\right)$ is greater than the lowest term of line segment $ N^\sigma\left(eu(\red{\tilde{\nu}}_{F'})|_\sigma\right)$,
 which is equal to $\pi_\sigma(-\det T^{1/2}_{F',>0})$ by proposition \ref{lem:ver}.
Thus
$$\pi_\sigma(-\det T^{1/2}_{F',>0}) \notin \pi_\sigma\left(N^A\left(mC_{y}^A(M^+_F \to X)_{|F'}\right)\right).$$
\red{This}
implies
$$-\det T^{1/2}_{F',>0} \notin N^A\left(\frac{mC_{-y}^A(M^+_F \to X)_{|F'}}{y^{\dim M_F^+}}\right),$$
as demanded in (\ref{wyr:Npt}).
\end{proof}

To prove theorem \ref{tw:mC} we need the following technical lemma.
\begin{alemat}[cf. {\cite[Remark after Theorem 3.1]{RTV'}} {\cite[Lemma 5.2-4]{RTV}}] \label{lem:supp}
	Let $M$ and $X$ be varieties such as in proposition \ref{pro:mC}. Suppose that \red{$X$}
	satisfies the ($\star$) condition.
	Consider a fixed point $F \in M^A$.
	Suppose that an element $a \in K^\T(X)$ satisfies two conditions:
	\begin{enumerate}
		\item $\supp(a) \subset \bigcup_{F'\le F} T^*M_{|M^+_{F'}} \red{\,,}$
		\item  $\lambda_{-y}(T^*M^+_{F_i})_{|F_i}$ divides $a_{|F_i}$ for any fixed points $F_i \in M^A$.
	\end{enumerate}
	Then $\supp(a) \subset \bigcup_{F'\le F} \red{\nu^*}(M^+_{F'}\subset M)$.
\end{alemat}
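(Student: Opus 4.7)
I plan to proceed by induction on $|T|$, where $T\subset M^A$ ranges over downward-closed subsets with respect to the BB partial order, establishing the slightly more general claim: if $a\in K^\T(X)$ is supported on $W_T:=\bigcup_{F'\in T}T^*M_{|M_{F'}^+}$ and $\lambda_{-y}(T^*M_{F_i}^+)_{|F_i}$ divides $a_{|F_i}$ for every $F_i\in M^A$, then $\supp(a)\subset N_T:=\bigcup_{F'\in T}\nu^*(M_{F'}^+\subset M)$. The set $N_T$ is closed by the $(\star)$ condition. The case $T=\varnothing$ is trivial; for the inductive step I pick a maximal $F_0\in T$ and let $T'=T\setminus\{F_0\}$, still downward closed. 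Then $W_{T'}$ is closed in $X$ (by downward closure of $T'$), and $\overline{M_{F_0}^+}\cap\bigl(M\setminus\bigcup_{F'\in T'}M_{F'}^+\bigr)=M_{F_0}^+$, so $T^*M_{|M_{F_0}^+}$ is closed in the open subset $U_0:=X\setminus W_{T'}$; moreover $\nu^*(M_{F_0}^+)\subset U_0$ and is disjoint from $W_{T'}$.

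The heart of the argument is a local analysis on $U_0$ showing $a_{|U_0}$ is supported on $\nu^*(M_{F_0}^+)$. Since $T^*M_{|M_{F_0}^+}$ is closed in $U_0$, I write $a_{|U_0}=i_*\tilde a$ for some $\tilde a\in G^\T(T^*M_{|M_{F_0}^+})$. The cotangent projection $T^*M_{|M_{F_0}^+}\to M_{F_0}^+$ composed with the $A$-equivariant affine bundle $M_{F_0}^+\to\{F_0\}$ from Theorem~\ref{tw:BB}(4) yields an isomorphism $K^\T(T^*M_{|M_{F_0}^+})\simeq K^\T(F_0)$, so $\tilde a=\pi^*c$ for some $c\in K^\T(F_0)$. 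The self-intersection formula for $i$, whose normal bundle at $(F_0,0)$ equals $\tilde\nu^-_{F_0}$ (computed from $TX_{|F_0}=\tilde\nu_{F_0}\oplus(\tilde\nu_{F_0})^*\otimes\C_y$), gives $a_{|F_0}=eu(\tilde\nu^-_{F_0})\cdot c$. The hypothesis $\lambda_{-y}(T^*M_{F_0}^+)_{|F_0}\mid a_{|F_0}$, combined with coprimeness of $eu(\tilde\nu^-_{F_0})$ (which involves only repelling $A$-weights and is constant in $y$) and $\lambda_{-y}(T^*M_{F_0}^+)_{|F_0}$ (a product of polynomials linear in $y$ with coefficients in attracting $A$-weights) in the Laurent polynomial ring $K^\T(pt)$, forces $\lambda_{-y}(T^*M_{F_0}^+)_{|F_0}\mid c$. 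By Lemma~\ref{lem:comp} this is equivalent to $eu(T^*M_{F_0}^+)_{|F_0}\mid c$ up to the invertible monomial $\det(T^*M_{F_0}^+)_{|F_0}$, so $c=eu(T^*M_{F_0}^+)_{|F_0}\cdot c''$. Pulling back yields $\tilde a=\pi^*eu(T^*M_{F_0}^+)\cdot\pi^*c''=[\mathcal{O}_{\nu^*(M_{F_0}^+)}]\cdot\pi^*c''$, where the last equality comes from the Koszul resolution of $\mathcal{O}_{\nu^*(M_{F_0}^+)}$ inside $T^*M_{|M_{F_0}^+}$ (the zero section of the quotient bundle $T^*M_{|M_{F_0}^+}\twoheadrightarrow T^*M_{F_0}^+$). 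Hence $\tilde a$ is in the image of the pushforward from $\nu^*(M_{F_0}^+)$, and so is $a_{|U_0}$.

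To conclude, $\supp(a)\subset W_{T'}\cup\nu^*(M_{F_0}^+)$, and these closed subsets are disjoint, so one decomposes $a=a_1+j_*a_2$ with $\supp(a_1)\subset W_{T'}$ and $a_2\in G^\T(\nu^*(M_{F_0}^+))$. Since $(F_0,0)$ is the only $A$-fixed point of $\nu^*(M_{F_0}^+)$, $j_*a_2$ restricts to zero at any $F_i\ne F_0$; since $F_0\notin W_{T'}$, $a_1$ restricts to zero at $F_0$. Therefore $a_1$ still satisfies the divisibility hypothesis at every fixed point, and the inductive hypothesis applied to $(a_1,T')$ gives $\supp(a_1)\subset N_{T'}$, whence $\supp(a)\subset N_{T'}\cup\nu^*(M_{F_0}^+)=N_T$. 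The main obstacle is the local analysis on $U_0$: correctly computing the normal bundle for the self-intersection formula, invoking the affine-bundle reduction to obtain $K^\T(T^*M_{|M_{F_0}^+})\simeq K^\T(F_0)$, and propagating the fixed-point divisibility by $\lambda_{-y}(T^*M_{F_0}^+)_{|F_0}$ along the whole BB-cell through the coprimeness argument.
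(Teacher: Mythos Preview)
Your local analysis over $U_0$ is correct and is essentially the paper's argument: the affine-bundle identification $K^\T(T^*M_{|M_{F_0}^+})\simeq K^\T(F_0)$, the self-intersection computation giving $a_{|F_0}=eu(\tilde\nu_{F_0}^-)\cdot c$, and the coprimeness step (the paper phrases this as the ``monic polynomial'' exercise, but it is the same observation that $eu(\tilde\nu_{F_0}^-)$ has $y$-degree zero while $\lambda_{-y}(T^*M_{F_0}^+)_{|F_0}$ is a product of $y$-linear factors) are all fine. The reduction to the Koszul class of $\nu^*(M_{F_0}^+)$ inside $T^*M_{|M_{F_0}^+}$ is also right, up to a harmless $\C_y$-twist in your notation for the normal bundle.

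The genuine gap is in your last paragraph. You assert that $W_{T'}$ and $\nu^*(M_{F_0}^+)$ are ``closed subsets'' of $X$ and hence that $a$ splits as $a_1+j_*a_2$ with $a_2\in G^\T(\nu^*(M_{F_0}^+))$. But $\nu^*(M_{F_0}^+)$ is only closed in $U_0$, not in $X$; its closure in $X$ meets $W_{T'}$ along pieces of $N_{T'}$. So the pushforward $j_*:G^\T(\nu^*(M_{F_0}^+))\to G^\T(X)$ is not defined, and the direct-sum decomposition you invoke does not exist. If you repair this by taking $a_2'$ supported on the closed set $\overline{\nu^*(M_{F_0}^+)}$, then for $F_i<F_0$ lying in that closure you no longer know $a_2'|_{F_i}=0$, and you have given no argument that $a_1=a-a_2'$ still satisfies the divisibility hypothesis at such $F_i$. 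Your recursion therefore does not go through.

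The paper avoids this by organising the induction differently: it never splits $a$. Instead it proves, by descending induction on a filtration $E_t$ of $W_T$, that $\supp(a)\subset E_t\cup N_T$ for every $t$; the base case $t=k$ is the hypothesis and the target is $t=0$. At each step one only needs the divisibility of the \emph{original} $a$ at the single fixed point $F_t$ being peeled off, together with Proposition~\ref{cor:K} to separate the $E_t$- and $N_T$-contributions inside $G^\T(E_t\cup N_T)$. Rewriting your induction in this form (prove $\supp(a)\subset W_{T'}\cup N_T$ for shrinking $T'$, without ever replacing $a$) makes your local analysis into a complete proof.
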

\begin{proof}
	Consider the set of positive BB-cells of $M$ corresponding to fixed points $F'\le F$. Arrange them in a sequence $B_1,...,B_k$ in such a way that for every $t\le k$ the sum $\bigcup_{i=1}^t B_i$ is a closed subset of $M$ (cf. \cite{B-B3} theorem 3). Denote by $F_i$ the fixed point corresponding to the BB-cell $B_i$. Denote by
	$$E_t=\bigcup_{i=1}^t T^*M_{|B_i} \text{ and by }V=\bigcup_{i=1}^{k} \nu^*(B_i \subset M).$$
	Note that the ($\star$) condition implies that the sets $E_\red{t}\cup V$ are closed. Our goal is to prove by induction that
	$$\supp(a) \subset E_t \cup V.$$
	 The first condition implies this containment for $t=k$, which allows to start induction. To prove the proposition we need this containment for $t=0$. 

 	Assume that $\supp(a) \subset E_{t}\cup V$ for some $t \ge 1$. We want to prove that \hbox{$\supp(a) \subset E_{t-1}\cup V.$} 
 	Denote by $\iota$ the inclusion
 	$$\iota:E_{t}\cup V \subset X.$$
 	Element $a$ is equal to $\iota_{*}\alpha$ for some $\alpha \in G^\T(E_t \cup V)$. Denote by $U$ the variety $T^*M_{|B_t} \setminus \nu^*B_t$. Using the equality of the complements
 	$$\left(E_{t}\cup V\right) \setminus \left(E_{t-1}\cup V\right)=T^*M_{|B_t} \setminus \nu^*B_t=U\,,$$
 	we get an exact sequence
 	$$ G^\T(E_{t-1} \cup V) \to G^\T(E_t \cup V) \to G^\T(U) \to 0\,.$$
 	So it is enough to show that $\alpha$ restricted to the open subset $U$ vanishes. The variety $E_t \cup V$ is reducible. Denote by $i$ and $j$ the inclusions $E_t \subset E_t \cup V$ and $V \subset E_t \cup V$. Proposition \ref{cor:K} implies that the map
 	$$i_*+j_*: G^\T(E_t) \oplus G^\T(V) \onto G^\T(E_t \cup V)$$
 	is epimorphic. Choose any decomposition
 	$$\alpha=i_*\alpha_E +j_*\alpha_V$$
 	such that $\alpha_E \in G^\T(E_t)$ and $\alpha_V\in G^\T(V)$. The subsets $V$ and $U$ have empty intersection so $$(j_*\alpha_V)_{|U}=0.$$
 	Thus, it is enough to show that $i_*\alpha_E$ also vanishes after restriction to $U$.
 	
 	Note that lemma \ref{lem:comp} implies the following equality in $K^\T(F_t)$
 		$$\lambda_{-y}(T^*B_t)=\frac{(-y)^{\dim B_t}}{\det T^*B_t}\lambda_{-1}(y^{-1}TB_t)=\frac{(-y)^{\dim B_t}}{\det T^*B_t}eu(\nu^*B_t \subset T^*M_{|B_t}).$$
 		Moreover\red{,} the first map in the exact sequence of closed immersion
 		$$K^\T(\nu^*B_t) \xto{i_*} K^\T(T^*M_{|B_t}) \to K^\T(U) \to 0$$
 		is multiplication by the  Euler class $eu(\nu^*B_t \subset T^*M_{|B_i})$. It follows that for an arbitrary element $b\in K^\T(T^*M_{|B_i})$ the restriction of $b$ to the set $U$ is trivial if and only if $\lambda_{-y}(T^*B_t)_{\red{|F_t}}$ divides $b_{|F_t}$.
 	
 	 The second assumption and the fact that $(\iota_*j_*\alpha_V)_{|U}=0$ \red{imply} that the element 
 	$$(\iota_*i_*\alpha_E)_{|F_t}=a_{|F_t}-(\iota_{*}j_*(\alpha_V))_{|F_t}$$
 is divisible by $\lambda_{-y}(T^*B_t)_{\red{|F_t}}$.
 The pushforward-pullback argument shows that
 	$$\left(eu(T^*M_{|B_t}\subset X)\alpha_E\right)_{|F_t}=\left(\iota_*i_*\alpha_E\right)_{|F_t}.$$
 	It follows that $\lambda_{-y}(T^*B_i)_{|F_t}$ divides $\alpha_{E|F_t}$ multiplied by the Euler class \hbox{$eu(T^*M_{|B_t}\subset X)_{|F_t}$.} We need to prove that it divides $\alpha_{E|F_t}$. We use the following simple  algebra exercise:
 	\begin{exer*}
 		Let $R$ be a domain and $R[y,y^{-1}]$ the ring of Laurent polynomials. Assume that $A(y) \in R[y,y^{-1}]$ is a monic Laurent polynomials and $r\in R$ a nonzero element. Then for any polynomial $B(y) \in R[y,y^{-1}]$
 		 $$A(y)|B(y) \iff A(y)|rB(y) \,. $$
 	\end{exer*}
 	The ring $K^\T(F_t)$ is isomorphic to $K^A(F_t)[y,y^{-1}]$, the polynomial $\lambda_{-y}(T^*B_t)_{|F_t}$ is monic (the smallest coefficient is equal to one) and the Euler class $eu(T^*M_{|B_t}\subset X)_{|F_t}$ belongs to the subring $K^A(F_t)$. The exercise implies that $\lambda_{-y}(T^*B_t)$ divides $\alpha_{E|F_t}$. So \red{the class} $\alpha_E$ vanishes on $U$. Proof of the lemma follows by induction.
\end{proof}

\begin{proof}[Proof of theorem \ref{tw:mC}]
	Proposition \ref{pro:mC} implies that the axioms {\bf b)} and {\bf c)} holds. It is enough to check the support axiom.
	
	Chose a fixed point $F\in M^A$.	
	Functorial properties of the motivic Chern class imply that the support of class
	$$mC_{-y}^A(M^+_F \to M) \in K^A(M)[y]\subset K^\T(M)$$
	is contained in the closure of $M^+_F$ which is contained in the closed set
	$\bigcup_{F'\le F}M^+_{F'}.$
	It follows that the support of \red{the} pullback element
	$$mC_{-y}^A(M^+_F \to M) \in  K^\T(T^*M)$$ 
	is contained in restriction of the cotangent bundle $T^*M$ to the subset $\bigcup_{F'\le F}M^+_{F'}.$
	To prove the support axiom we need to check that it is contained in the smaller subset
	$$\bigcup_{F'\le F} \nu^*(M^+_{F'}\subset M).$$
	 Thus, it is enough to check  the assumptions of lemma \ref{lem:supp} for $a$ equal to the class $mC_{-y}^A(M^+_F \to M)$. We know that the first assumption holds. The local product condition (definition \ref{df:prd}) implies that for any fixed point $F'$
	 \begin{align*} mC_{-y}^A(M^+_{F} \to M)_{|F'}=&
	 mC_{-y}^A(M^+_F \cap U  \to U)_{|F'} \\
	 =&mC^A_{-y}(\red{Z_{F',F}'} \times M^+_{|F'} \to Z_{F'} \times M^+_{|F'})_{|F'} \\
	 =&mC^A_{-y}(\red{Z_{F',F}'} \subset Z_{F'})_{|F'}  mC^A_{-y}(M^+_{|F'} \to M^+_{|F'})_{|F'} \\
	 =&mC^A_{-y}(\red{Z_{F',F}'} \subset Z_{F'})_{|F'} \lambda_{-y}(T^*M_{F'}^+)_{|F'} \,.
	 \end{align*}
	\red{The second equality follows from the local product condition and the third from \cite[theorem 4.2(3)]{AMSS}.}
	Hence the class $mC_{-y}^A(M^+_F \to M)$ satisfies the assumptions of lemma \ref{lem:supp}.
\end{proof}

\begin{rem}
	In the proof of theorem \ref{tw:mC} we need the local product condition only to get divisibility demanded in lemma \ref{lem:supp}. Namely for a pair of fixed points $F,F'$ such that $F>F'$ we need divisibility of $mC_{-y}^{\red{A}}(M_F^+\subset M)_{|F'}$ by $\lambda_{-y}(T^*M^+_{F'})_{|F'}$.
	If the closure of the BB-cell of $F$ is smooth at $F'$ and the BB-cells form a stratification of $M$ then the divisibility condition automatically holds. In general case one can assume \red{the} existence of \red{a} motivically transversal slice instead of the local product condition.
Namely suppose there is a smooth locally closed subvariety $S \subset M$ such that:
	\begin{itemize}
		\item $F'\in S$ and $S$ is transversal to $M_{F'}^+$ at $F'$\red{,}
		\item $S$ is of dimension complementary to $M_{F'}^+$\red{,}
		\item $S$ is motivically transversal (cf. \cite{FRWp}, section 8) to $M_F^+$\red{.}
	\end{itemize}
	Then theorem 8.5 from \cite{FRWp}, or reasoning analogous to \red{the} proof
	of lemma 5.1 from \cite{FRW} proves the desired divisibility condition. In the case of homogenous varieties\red{,} divisibility  can be also acquired using theorem 5.3 of \cite{FRW} for the Borel group action.
\end{rem}

\section{Other slopes} \label{s:slope}
Computation of the stable envelopes for \red{the} trivial slope allows one to easily get formulas for all integral slopes.
\begin{cor}
	Consider a situation described in theorem \ref{tw:mC}. For a $A$-linearisable line bundle $s \in Pic(X)$ \red{the element}
	$$
	\frac{s}{s_{|F}}\cdot\frac{mC_{-y}^A(M^+_F \to M)}{y^{\dim M_F^+}} \in K^A(M)[y,y^{-1}] \simeq K^\T(M) \simeq K^\T(T^*M)
	$$
	\old{determine} \red{is equal to} the $K$-theoretic stable envelope \red{$Stab^s(F)$.}
	\old{$y^{-\frac{1}{2}\dim M_F^+}Stab^s_{\mathfrak{C},T^{1/2}}(F)$.}
\end{cor}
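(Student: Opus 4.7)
The plan is to verify the three axioms of the $K$-theoretic stable envelope (definition \ref{def:ele}) for the candidate class
$$C_F^s := \frac{s}{s_{|F}}\cdot\frac{mC_{-y}^A(M^+_F \to M)}{y^{\dim M_F^+}} \in K^\T(T^*M)$$
and then invoke the uniqueness statement (proposition \ref{pro:uniq}). Since theorem \ref{tw:mC} identifies the trivial-slope stable envelope $Stab^\theta(F)$ with the second factor, we may write $C_F^s = \frac{s}{s_{|F}}\cdot Stab^\theta(F)$, so the work reduces to understanding how multiplication by the globally defined line bundle $s/s_{|F}$ affects each axiom.

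\textbf{Support axiom.} The factor $s/s_{|F}$ is an equivariant line bundle on $X$, hence an invertible element of $K^\T(X)$, while $s_{|F}\in K^\T(pt)$ is a scalar. Thus multiplication by $s/s_{|F}$ preserves supports, and the support axiom for $C_F^s$ follows immediately from the support axiom for $Stab^\theta(F)$ established in the proof of theorem \ref{tw:mC}.

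\textbf{Normalization axiom.} Restricting to the fixed point $F$ we get $(s/s_{|F})_{|F}=1$, so
$$C_F^s{}_{|F}=Stab^\theta(F)_{|F}=eu(\nu^-_F)\frac{(-1)^{\rank T^{1/2}_{F,>0}}}{\det T^{1/2}_{F,>0}},$$
which is exactly the axiom \textbf{b)} of definition \ref{def:ele}.

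\textbf{Polytope axiom.} For $F'<F$ we have $C_F^s{}_{|F'} = \frac{s_{|F'}}{s_{|F}}\cdot Stab^\theta(F)_{|F'}$, so
$$N^A\bigl(C_F^s{}_{|F'}\bigr)+s_{|F} = s_{|F'}+N^A\bigl(Stab^\theta(F)_{|F'}\bigr),$$
because translating by a character simply shifts the Newton polytope (proposition \ref{lem:N(a)}). Applying the polytope axiom \textbf{c)} for the trivial slope (where $\theta_{|F}=\theta_{|F'}=0$) yields
$$N^A\bigl(Stab^\theta(F)_{|F'}\bigr)\subseteq \bigl(N^A(eu(\nu^-_{F'}))\setminus\{0\}\bigr)-\det T^{1/2}_{F',>0},$$
and translating both sides by $s_{|F'}$ gives precisely the required containment for slope $s$. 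Uniqueness of the stable envelope (proposition \ref{pro:uniq}, applicable because the local product condition of theorem \ref{tw:mC} ensures the $(\star)$ condition) then yields $C_F^s=Stab^s(F)$. There is no serious obstacle here: the argument is essentially a bookkeeping check that tensoring by $s/s_{|F}$ is exactly the right normalization to convert the axioms for slope $\theta$ into the axioms for slope $s$, and the integrality hypothesis $s\in Pic(X)$ (rather than $Pic(X)\otimes\Q$) is what ensures that $s/s_{|F}$ is a genuine element of $K^\T(X)$ available to multiply by.
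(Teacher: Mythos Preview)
Your proof is correct and is precisely the straightforward verification the paper has in mind; the paper states this corollary without proof, introducing it with the remark that the trivial-slope computation ``allows one to easily get formulas for all integral slopes.'' Your observation that integrality of $s$ is needed so that $s/s_{|F}$ is an honest element of $K^\T(X)$ is exactly the point of the hypothesis.
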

In this section we aim to prove that the stable envelope for \red{the} trivial slope coincides with the one for \red{a} sufficiently small anti-ample slope. Namely:
\begin{atw} \label{tw:slo}
	Let $M$ be a projective, smooth \red{A-variety. Suppose that the fixed point set $M^A$ is finite.}
	Consider the variety $X=T^*M$ with the action of the torus $\T=\C^*\times A$. 
	Choose any weight chamber $\mathfrak{C}$ of the torus $A$ and polarization $T^{1/2}=TM$.
	Suppose that $M$ satisfies the local product condition.
	For any anti-ample $A$-linearisable line bundle $s$ and a sufficiently big integer $n$\red{, the element}
	$$
	\frac{mC_{-y}^A(M^+_F \to M)}{y^{\dim M_F^+}} \in K^A(M)[y,y^{-1}] \simeq K^\T(M) \simeq K^\T(T^*M)
	$$
	\old{determine} \red{is equal to} the $K$-theoretic stable envelope \red{$Stab^{\frac{s}{n}}(F)$.}
	\old{$y^{-\frac{1}{2}\dim M_F^+}Stab^s_{\mathfrak{C},T^{1/2}}(F)$.}
\end{atw}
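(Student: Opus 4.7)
The approach is to reduce Theorem~\ref{tw:slo} to Theorem~\ref{tw:mC} via a perturbation argument in the slope-dependent axiom. By Theorem~\ref{tw:mC}, the elements $a_F := mC_{-y}^A(M_F^+ \to M)/y^{\dim M_F^+}$ equal $Stab^\theta(F)$ for the trivial slope, so in particular they satisfy axioms \textbf{a)} and \textbf{b)} of Definition~\ref{def:ele} (which do not depend on the slope). By uniqueness (Proposition~\ref{pro:uniq}), it is enough to verify axiom \textbf{c)} for the slope $s/n$. After absorbing $\tfrac{1}{n}s_{|F}$ to the right and using the identification
\[
(N^A(eu(\nu_{F'}^-)) \setminus \{0\}) - \det T^{1/2}_{F',>0} = N^A(eu(\tilde\nu_{F'})) \setminus \{-\det T^{1/2}_{F',>0}\}
\]
obtained in the proof of Proposition~\ref{pro:mC}, the inclusion to verify for every $F' < F$ becomes
\[
N^A((a_F)_{|F'}) \subseteq \bigl(N^A(eu(\tilde\nu_{F'})) \setminus \{-\det T^{1/2}_{F',>0}\}\bigr) + \tfrac{1}{n}(s_{|F'} - s_{|F}).
\]

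The case of zero shift is exactly Proposition~\ref{pro:mC}. Two positivity/strictness inputs will power the perturbation. First, the proof of Proposition~\ref{pro:mC} shows that for every primitive $\sigma \in \mathfrak{C}$ the projection $\pi_\sigma(N^A((a_F)_{|F'}))$ lies strictly above the unique minimum $\pi_\sigma(-\det T^{1/2}_{F',>0})$; since both polytopes are integral and this vertex is extremal, the gap is bounded below by one primitive weight. Second, I will invoke the classical fact that for an $A$-linearized anti-ample line bundle $s$ and fixed points $F' < F$ (so that $F' \in \overline{M_F^+}$), the character $s_{|F} - s_{|F'}$ lies in the interior of the cone $\mathfrak{C}^\vee \subset \mathfrak{a}^*$ dual to the chamber; equivalently, writing $\epsilon := \tfrac{1}{n}(s_{|F'} - s_{|F})$, one has $\pi_\sigma(\epsilon) < 0$ for every $\sigma \in \mathfrak{C}$.

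With these in hand, for $n$ large the size $|\pi_\sigma(\epsilon)|$ is less than the integer gap above, so the shifted excluded point $-\det T^{1/2}_{F',>0} + \epsilon$ remains strictly below $\pi_\sigma(N^A((a_F)_{|F'}))$ and hence outside it; this handles the vertex-removal part of the inclusion. For the polytope containment I will check that $N^A((a_F)_{|F'})$ lies strictly inside $N^A(eu(\tilde\nu_{F'}))$ along every facet toward which an anti-ample shift could push it out. Feeding in the local product splitting $U \cap M_F^+ \simeq (U \cap M_{F'}^+) \times Z'_{F',F}$ from Definition~\ref{df:prd} as in the proof of Theorem~\ref{tw:mC}, the facets touched by $N^A((a_F)_{|F'})$ are precisely those carrying the weights of $T^*M_{F'}^+$, and anti-ampleness orients $\epsilon$ inward against each of them.

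The main obstacle is exactly this last point: controlling the orientation of $\epsilon$ against \emph{all} facets of $N^A(eu(\tilde\nu_{F'}))$ that $N^A((a_F)_{|F'})$ can abut, not just the facet singled out by one $\sigma \in \mathfrak{C}$. I expect to handle it by running the Hirzebruch-genus / limit computation of Proposition~\ref{pro:mC} in a full family of one-parameter subgroups $\sigma \in \mathfrak{C}$, turning each resulting strict inequality into a one-lattice-unit buffer, and combining them via the definition of the chamber $\mathfrak{C}$ to absorb an arbitrary anti-ample perturbation of size $1/n$ for $n$ sufficiently large.
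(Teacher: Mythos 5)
Your high-level strategy --- reduce to the trivial-slope case via Theorem \ref{tw:mC} (axioms \textbf{a)}, \textbf{b)}), then verify axiom \textbf{c)} by a perturbation-of-Newton-polytope argument --- matches the paper. You also correctly isolate the two things to control: that the excluded point $-\det T^{1/2}_{F',>0}$ stays excluded after a small shift, and that the shifted polytope stays inside the target. But the way you plan to control the second point is where the proposal fails.

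Your plan runs the Hirzebruch-genus/limit computation ``in a full family of one-parameter subgroups $\sigma \in \mathfrak{C}$''. This cannot work. All $\sigma\in\mathfrak{C}$ give the \emph{same} BB-decomposition and the same weight decomposition of the normal bundle (Proposition~\ref{lem:chamb}); accordingly, varying $\sigma$ over $\mathfrak{C}$ only probes the single vertex $-\det T^{1/2}_{F',>0}$ of the zonotope $N^A(eu(\tilde\nu^-_{F'}))$ and the facets incident to it. You get no information about the facets not containing that vertex, yet $N^A\bigl(mC^A_{-y}(M^+_F\to M)_{|F'}\bigr)$ can genuinely abut some of those. The paper's argument instead assigns to each facet $\tau$ the one-dimensional subtorus $\sigma_{H_\tau}$ whose cocharacter generates the kernel of the linear form defining $H_\tau$: these tori are orthogonal to the facets, are typically \emph{not} in $\mathfrak{C}$, and in fact $M^{\sigma_{H_\tau}}$ may be non-isolated (see the remark after Theorem~\ref{tw:slo}). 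The limit is then taken along $\sigma_{H_\tau}$, not along a chamber torus, and equals $mC^{A/\sigma_{H_\tau}}_{-y}\bigl(M^+_F\cap M^{\sigma_{H_\tau},+}_{\tilde F}\to\tilde F\bigr)$ up to a factor. This is what decides, facet by facet, whether the Newton polytope touches $H_\tau$.

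Your anti-ampleness input is also misformulated. You claim that $s_{|F}-s_{|F'}$ lies in the interior of $\mathfrak{C}^\vee$ for all $F'<F$, i.e.\ $\pi_\sigma(s_{|F}-s_{|F'})>0$ for all $\sigma\in\mathfrak{C}$. Even granting this (it is essentially the $G/B$ dominance intuition, which need not hold in the stated generality), it only controls projections along chamber directions, not along the facet-normal directions $\pi_{H_\tau}$ that actually occur in the smallness axiom. What the paper needs and proves is the weaker but differently-indexed statement: for each facet subtorus $\sigma_{H_\tau}$ for which the limit is nonzero, $F$ lies in $\overline{M^{\sigma_{H_\tau},+}_{\tilde F}}$, and then a chain of one-dimensional $\sigma_{H_\tau}$-orbits (Lemma~9 of \cite{B-B3}) combined with anti-ampleness on each $\PP^1$ gives $\pi_{H_\tau}(s_{|F}-s_{|F'})\ge 0$. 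Note this is a non-strict inequality; the strictness in your vertex-removal argument is not what supplies the buffer for the other facets --- there the buffer comes from the \emph{emptiness} of $N^A(\cdot)\cap H_\tau$ when the limit vanishes, and no buffer is needed when $\pi_{H_\tau}(s_{|F}-s_{|F'})\ge 0$. You would need to restructure the proof around the facet-normal tori $\sigma_{H_\tau}$ (and handle their possibly non-isolated fixed loci) for the argument to close.
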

\begin{proof}
	Theorem \ref{tw:mC} implies that the considered \red{element satisfies} the axioms {\bf a)} and {\bf b)} of stable envelope. It is enough to check the axiom {\bf c)}. Namely for a fixed point $F'\le F$ we need to show that
	$$N^A\left(mC_{-y}^A(M^+_F \to M)_{|F'}\right) +\frac{s_{F}-s_{F'}}{n} \subseteq N^A(eu(\nu^-_{F'}) -\{0\})-\det T^{1/2}_{F',>0}.$$
	Note that a part of theorem \ref{tw:mC} is an analogous inclusion for the trivial slope. It implies that the point $-\det T^{1/2}_{F',>0}$ \red{does not} belong to the Newton polytope of motivic Chern class. The Newton polytope is a closed set\red{,} thus for \red{a} small enough vector
	 $\vv \in \mathfrak{a}^*$ its translation by $\vv$ also \red{does not} contain the point $-\det T^{1/2}_{F',>0}$.
	So it is enough to prove that there exists an integer $n$ such that
	$$N^A\left(mC_{-y}^A(M^+_F \to M)_{|F'}\right) +\frac{s_{F}-s_{F'}}{n} \subseteq N^A(eu(\nu^-_{F'}))-\det T^{1/2}_{F',>0}.$$
	Moreover\red{,} in the course of proof of proposition \ref{pro:mC} we showed containment of polytopes
	 $$N^A(eu(\tilde{\nu}^-_{F'}))\subseteq N^A(eu(\nu^-_{F'}))-\det T^{1/2}_{F',>0}.$$
	 Therefore, it is enough to show that for \red{a} big enough integer $n$ there is an inclusion
	 $$N^A\left(mC_{-y}^A(M^+_F \to M)_{|F'}\right) +\frac{s_{F}-s_{F'}}{n} \subseteq N^A(eu(\tilde{\nu}^-_{F'})).$$
	 
	 Consider a lattice polytope $N \subset \mathfrak{a}^*$. Define a facet as a codimension one face.
	 Let integral hyperplane denote an affine subspace of codimension one, spanned by lattice points. Suppose that the whole interior of the polytope $N$ lies on one side of an integral hyperplane $H$. Denote by $E_H$ \red{half-space}
	 which is the closure of the component of complement of $H$ which contains the interior of $N$. 
	 
	 Suppose that the affine span of \red{a lattice polytope} $N$ is the whole ambient space. For a facet $\tau$ of $N$\red{,} let $H_\tau$ be an integral hyperplane which is the affine span of the face $\tau$. Note that
	 $$N=\bigcap_{\tau} E_{H_\tau} $$
	 where the intersection is indexed by \red{the set of} codimension one faces. 
	  
	 \red{A} similar argument
	 can \red{be}
	 applied to any\old{Newton} \red{lattice} polytope, not necessarily spanning the whole ambient space. Denote by aff$(-)$ the affine span operator.
	 For any facet $\tau$ choose an integral hyperplane $H_\tau$ such that 
	 $$H_\tau \cap \text{aff}(N)=\text{aff}(\tau) \,. $$
	Then
	$$N=\text{aff}(N)\cap \bigcap_{\tau} E_{H_\tau} $$
	Thus\red{,} to check containment in the polytope $N$ it is enough to check containment in finitely many integral \red{half-spaces}
	$E_{H_\tau}$ and \red{the} affine span of $N$.  We use this observation for $N$ equal to the Newton polytope $N^A(eu(\tilde{\nu}^-_{F'}))$.
	  
	  \red{Let $H$ be an integral hyperplane.} We say that \red{a} vector
	   $\vv \in \mathfrak{a}^*$ points to $E_H$ when addition of $\vv$ preserves $E_H$. Our strategy of the proof is to show that for an integral hyperplane $H_\tau$ corresponding to \red{a} facet
	   $\tau$ of the polytope $N^A(eu(\tilde{\nu}^-_{F'}))$ at least one of the following conditions holds: 
	  \begin{itemize}
	  	\item The intersection $N^A\left(mC_{-y}^A(M^+_F \to M)_{|F'}\right) \cap H_\tau$ is empty.
	  	\item The vector $s_F-s_{F'}$ points to $E_{H_\tau}$.
	  \end{itemize}
  	Moreover\red{,} if an integral hyperplane $H$ contains the whole polytope $N^A(eu(\tilde{\nu}^-_{F'}))$ then the addition of the vector $s_F-s_{F'}$ preserves $H$. 
  	
  	Note that the above facts are sufficient to prove the theorem. Namely proposition \ref{pro:mC} shows that the polytope $N^A\left(mC_{-y}^A(M^+_F \to M)_{|F'}\right)$ is contained \red{in}
  	 $N^A(eu(\tilde{\nu}^-_{F'}))$.
  	It follows that it lies inside $E_{H_\tau}$ for every facet $\tau$.
  	If the vector $s_F-s_{F'}$ points to $E_{H_\tau}$ then for every integer $n \in \N$
  	$$N^A\left(mC_{-y}^A(M^+_F \to M)_{|F'}\right)+\frac{s_{F}-s_{F'}}{n} \subset E_{H_\tau} \red{\,.} $$
  	On the other hand if the intersection $N^A\left(mC_{-y}^A(M^+_F \to M)_{|F'}\right) \cap \red{H_\tau}$
  	is empty then translation of the polytope $N^A\left(mC_{-y}^A(M^+_F \to M)_{|F'}\right)$ by a sufficiently small vector still lies in~$E_{H_\tau}$. There are only finitely many facets of \red{ the polytope $N^A(eu(\tilde{\nu}^-_{F'}))$} so
  	there exists \red{an integer} $n$ such that
  	$$N^A\left(mC_{-y}^A(M^+_F \to M)_{|F'}\right)+\frac{s_{F}-s_{F'}}{n} \subset \bigcap_{\tau}E_{H_\tau}.$$
  	Moreover\red{,} addition of the vector $s_{F}-s_{F'}$ preserves the affine span of $N^A(eu(\tilde{\nu}^-_{F'}))$. It follows that for \red{a sufficiently big integer} $n$ the desired inclusion holds.
  	
  	Let $H \subset \mathfrak{a}^*$ be any integral hyperplane. Denote by $\tilde{H}$ the vector space parallel to $H$ (i.e a hyperplane passing through $0$).
  	Consider the one dimensional subspace $$\mathfrak{h}=ker(\mathfrak{a} \onto \tilde{H}^*).$$
  	The hyperplane $H$ is integral so $\mathfrak{h}$ corresponds to a one dimensional subtorus $\sigma_H\subset A$.
  	Choose an isomorphism $\sigma_H \simeq \C^*$ such that the induced map
  	$$\pi_H:\mathfrak{a}^* \onto \mathfrak{a}^*/\tilde{H} \simeq \mathfrak{h}^* \simeq \R,$$
  	sends the vectors pointing to
  	$E_H$ to \red{the} non-negative numbers. Thus, the vector $s_F-s_{F'}$ points to $E_H$
  	if and only if
  	 $$\pi_H(s_F-s_{F'})\ge0.$$
  	
  	 The \red{choice of} isomorphism $\sigma_H \red{\simeq} \C^*$ corresponds to \red{the} choice of primitive character $\ttt$  of the torus $\sigma_H$.
  	 To study the intersection with hyperplane $H$ we use the limit technique with respect to the torus $\sigma_H$. We use the definition of limit map from \cite{Kon} definition 4.1. It is a map defined on a subring of \red{the} localised K-theory:
  	 $$\lim_{\ttt \to 0}: S_A^{-1}K^A(pt)[y,y^{-1}] \dashrightarrow S_{A/\sigma_H}^{-1}K^{A/\sigma_H}(pt)[y,y^{-1}]\,.$$
  	 The multiplicative system $S_A$ (respectively $S_{A/\sigma_H}$) \old{is equal to} \red{consists of} all nonzero elements of $K^A(pt)$ (respectively $K^{A/\sigma_H}(pt)$). We present a sketch of construction of the above map. Choose an isomorphism of tori $A \simeq \sigma_H\times A/\sigma_H$. It induces an isomorphism $K^A(pt) \simeq K^{A/\sigma_H}(pt)[\ttt,\ttt^{-1}]$. Then the limit map is defined on the subring $K^{A/\sigma_H}(pt)[\ttt][y,y^{-1}]$ by killing all positive powers of {\bf t}. For technical details and extension to the localised $K$-theory
  	 see \cite{Kon} section~4. 
  	 
  	 Let $H$ be a hyperplane corresponding to \red{a} facet of \red{the} polytope $N^A(eu(\tilde{\nu}^-_{F'})) \subset E_H$. Thus (for a more detailed discussion see remark \ref{rem:lim})
  	$$N^A\left(mC_{-y}^A(M^+_F \to M)_{|F'}\right) \cap H = \varnothing \iff
  	\lim_{\ttt \to 0} \frac{mC_{-y}^A(M^+_F \to M)_{|F'}}{eu(\tilde{\nu}^-_{F'})}=0\,.$$
  	Let $\tilde{F} \subset X^{\sigma_H}$ be a component of the fixed \red{point} set
  	 which contains $F'$. Proposition 4.3 and theorem 4.4 from \cite{Kon} \red{imply}
  	 that
  	\begin{multline*}
  		\lim_{\ttt \to 0} \frac{mC_{-y}^A(M^+_F \to M)_{|F'}}{eu(\tilde{\nu}^-_{F'})}=
  		\lim_{\ttt \to 0}\left( \frac{mC_{-y}^A(M^+_F \to M)_{|\tilde{F}}}{eu(\tilde{F}\to M)\lambda_{-1}(T^*\tilde{F})}\right)_{|F'}= \\=
  		\left(\lim_{\ttt \to 0} \frac{mC_{-y}^A(M^+_F \to M)_{|\tilde{F}}}{eu(\tilde{F}\to M)\lambda_{-1}(T^*\tilde{F})}\right)_{|F'}= 
  		\left(\frac{1}{\lambda_{-1}(T^*\tilde{F})}\lim_{\ttt \to 0} \frac{mC_{-y}^A(M^+_F \to M)_{|\tilde{F}}}{eu(\tilde{F}\to M)}\right)_{|F'}= \\=
  		\left(\frac{mC_{-y}^{A/\sigma_{H}}(M^+_F \cap M_{\tilde{F}}^{\sigma_H,+}
  		\to \tilde{F})}{\lambda_{-1}(T^*\tilde{F})}\right)_{|F'}
  	\end{multline*} 
	where $M_{\tilde{F}}^{\sigma_H,+}$ is the positive BB-cell of $\tilde{F}$ with respect to the torus $\sigma_H$. It follows that if the intersection $ M^+_F \cap M_{\tilde{F}}^{\sigma_H,+}$ is empty then the intersection $N^A\left(mC_{-y}^A(M^+_F \to M)_{|F'}\right) \cap H$ is also empty.
	The closure of \red{the} set $M_{\tilde{F}}^{\sigma_H,+}$ is $A$-equivariant, thus $M^+_F \cap M_{\tilde{F}}^{\sigma_H,+}$ can be nonempty only if $F$ belongs to the closure of $M_{\tilde{F}}^{\sigma_H,+}$.
	To conclude it is enough to prove that $\pi_H(s_F-s_{F'})\ge 0$ whenever $F$ belongs to the closure of $M_{\tilde{F}}^{\sigma_H,+}$.
	
	For $F \in \overline{M_{\tilde{F}}^{\sigma_H,+}}$ there exist a finite number of points $A_1,...,A_{m-1},B_1,...,B_m \in M^{\sigma_H}$ such that
	 \begin{itemize}
	 	\item $F=B_1$ and $B_m \in \tilde{F}$,
	 	\item for every $i$ points $A_i$ and $B_i$ lies in the same component of \red{the fixed point set}~$M^{\sigma_H}$,
	 	\item there exists one dimensional $\sigma_H$-orbit from point $B_i$ to $A_{i-1}$,
	 \end{itemize}
 (see lemma 9 from \cite{B-B3} for a proof in the case of isolated fixed points). For a fixed point $B \in M^{\sigma_H}$ the fiber $s_{|B}$ is a $\sigma_H$-representation, denote by $\tilde{s}_B$ its  character.
 \red{If $B \in M^A$ then} there is an equality $\tilde{s}_B=\pi(s_B)$.
 The \red{line} bundle $s$ is anti-ample, so its restriction to every one dimensional $\sigma_H$ orbit is also anti-ample. For every anti-ample line bundle on $\PP^1$\red{,}  weight on the repelling fixed point is greater or equal than  weight on the attracting one\red{.} Thus
 $$\tilde{s}_{B_i} =\tilde{s}_{A_{i}} \ge \tilde{s}_{B_{i+1}}\,.$$
 It follows that
 $$\pi(s_F)=\tilde{s}_{B_1} \ge \tilde{s}_{B_m}=\pi(s_{F'}).$$
 
 Assume now that an integral hyperplane $H$ contains the whole polytope $N^A(eu(\tilde{\nu}^-_{F'}))$. Then the torus $\sigma_H$ acts trivially on the tangent space $\tilde{\nu}^-_{F'}$.
 Consider the fixed \red{point} set
  component $F_H\subset M^{\sigma_H}$ which contains $F$. It is a smooth closed subvariety of $M$ whose tangent space at $F'$ is the whole tangent space $T_{F'}M$. Thus\red{,} it contains the connected component of $F'$.
 It follows that $\sigma_H$ weights of $s$ restricted to $F$ and $F'$ coincide, thus $\pi(s_F-s_{F'})=0$.
\end{proof}
\red{
\begin{rem}
	 Let $H$ be a hyperplane corresponding to a facet of the polytope $N^A(eu(\tilde{\nu}^-_{F'}))$ and let $\sigma_H\subset A$ be the corresponding one dimensional subtorus. The  fixed point set $M^{\sigma_H}$ may be non-isolated.
\end{rem}
}
\begin{rem}
	The inequality about weights of \red{an} anti-ample line bundle on $\PP^1$ can be checked directly using the fact that all anti-ample line bundles on the projective line $\PP^1$ are of the form $\mathcal{O}(n)$ for $n < 0$. It can be also derived from the localization formula in equivariant cohomology and the fact that \red{the} degree of an anti-ample line bundle on $\PP^1$ is negative (cf. \cite{OM} paragraph 3.2.4).
\end{rem}

\begin{rem}\label{rem:lim}
	 \old{Intuitively one can think of the limit map of fraction as considering only this part of classes which corresponds to the last translation of hyperplane which intersects the Newton polytope of denominator. Namely} Consider the subtorus $\sigma_H \subset A$, corresponding to some integral hyperplane $H$, with chosen primitive character ${\bf t}$. The choice of ${\bf t}$ induces \red{a} choice of a \red{half-space}
	 $E_H$. Consider classes $a,b \in K^A(pt)$ such that 
	 $$N^A(a), N^A(b) \subset E_H \text{ and } N^A(b)\cap H\neq \varnothing.$$
	 \red{Intuitively, the limit of fraction $\lim_{\ttt \to 0}\frac{a}{b}$ takes into account the parts of classes $a,b$ that correspond to the intersections $N^A(a)\cap H$ and $N^A(b)\cap H$. More formally, a} choice
	 of splitting $A \simeq \sigma_H \times A/\sigma_H$ corresponds to a choice of integral character $\gamma \in \mathfrak{a}^*$ \red{whose} restriction to the subtorus $\sigma_H$ is equal to {\bf t}.
	 There \red{exists an} integer $m$ such that \red{$0\in\gamma^{m}H$ (thus $\gamma^{m}H=\tilde{H}$)}. It follows that under the isomorphism $K^A(pt)\simeq K^{A/\sigma_H}(pt)[\gamma,\gamma^{-1}]$ we have
	 $$\gamma^{m}a,\gamma^mb \in K^{A/\sigma_H}(pt)[\gamma].$$
	 Moreover\red{,} $\gamma^mb$ has nontrivial coefficient corresponding to $\gamma^{0}$. It is equal to the part of class $b$
	 \red{corresponding}
	 to the intersection $H\cap N^A(b)$. Denote by $q$ the projection
	 $$\red{q:}K^{A/\sigma_H}(pt)[\gamma] \to K^{A/\sigma_H}(pt) $$
	 defined by \red{$q(\gamma)=0$.}
	 It follows that the limit map is defined on the element $\frac{a}{b}$ as
	 $$\lim_{\ttt\to 0}\frac{a}{b}=\lim_{\ttt\to 0}\frac{\gamma^ma}{\gamma^mb}=\frac{q(\gamma^ma)}{q(\gamma^mb)}.$$
\end{rem}

\begin{rem}
	Using limit techniques one usually restricts to $K^\sigma(pt) \simeq \Z[\ttt,\ttt^{-1}]$ for \red{a} general enough subtorus $\sigma$ and then consider limits (cf. \cite{FRW,SZZ}).
	In our case\red{,} we consider a chosen subtorus $\sigma_H$ so we cannot proceed in this manner. It may happen that after restriction to $K^{\sigma_H}(pt)$ denominator vanishes.
\end{rem}

\red{\begin{rem}
	For a generalization of theorem \ref{tw:slo} to the case of arbitrary slope see our next paper \cite{KonW}.
\end{rem}}

\section{Example: The Projective plane}
In this section we aim to illustrate the proof of theorem \ref{tw:slo} by presenting explicit computations in the case of projective plane. We consider (using notation from theorem~\ref{tw:slo})
\begin{itemize}
	\item The torus $A=(\C^*)^2$ acting on the projective plane $M=\PP^2$ by:
	$$(t_1,t_2)[x:y:z]=[x:t_1y:t_2z].$$
	\item The weight chamber corresponding to the one dimensional subgroup $\sigma(t)=(t,t^2).$
	\item The anti-ample line bundle $s=\mathcal{O}(-1)$ as a slope.
	\item The fixed points $F=[0:1:0]$ and $F'=[0:0:1].$
\end{itemize}
Denote by $\alpha$ and $\beta$ characters of the torus $\T$ given by projections to the first and the second coordinates of $A$, respectively. Local computation leads to formulas:
\begin{multicols}{2}
	\begin{align*}
	&eu(\nu(F'\to M))_{|F'}=\left(1-\frac{\beta}{\alpha}\right)(1-\beta) \\
	&mC_{-y}(M^+_F\to M)_{|F'}=(1-y)\frac{\beta}{\alpha}(1-\beta) \\
	&\det T^{1/2}_{F',>0}=0 \\
	&\vv:=s_{|F}-s_{|F'}=\frac{\alpha}{\beta}
	\end{align*}
	\columnbreak
	\centering
	\begin{tikzpicture}[scale=1.25]
	\coordinate (Origin)   at (0,0);
	\coordinate (XAxisMin) at (-2,0);
	\coordinate (XAxisMax) at (1,0);
	\coordinate (YAxisMin) at (0,-1);
	\coordinate (YAxisMax) at (0,2);
	
	\draw [thin, black,-latex] (XAxisMin) -- (XAxisMax);
	\draw [thin, black,-latex] (YAxisMin) -- (YAxisMax);
	
	\coordinate (B1) at (0,0);
	\coordinate (B2) at (0,1);
	\coordinate (B3) at (-1,2); 
	\coordinate (B4) at (-1,1);        

	\filldraw[fill=yellow, fill opacity=0.5, draw=yellow, draw opacity = 0] (B1)--(B2)--(B3)--(B4);
	
	\foreach \x in {-2,...,0}{
		\node[draw,circle,inner sep=1pt,fill] at (\x, 0) {};
	}
		\foreach \y in {-1,...,1}{
		\node[draw,circle,inner sep=1pt,fill] at (0, \y) {};
	}
	\foreach \x in {-2,...,1}{
		\foreach \y in {-1,...,2}{
			\node[draw,circle,inner sep=0.5pt,fill] at (\x,\y) {};
		}
	}
	
	\draw [very thick,blue](B3) -- (B4);
	\draw [very thick,green,-latex](0,0) -- (1,-1);
	\draw[ fill=red]   (0,0) circle (.1);
	\node at (1.25,0.25){$\alpha$};
	\node at (0.25,2.25){$\beta$};
	\node[green] at (1,-0.75) {$\vv$};	
	\node at (0.25,0.5){$\tau_1$};
	\node at (-0.5,1.75){$\tau_2$};
	\node at (-1.25,1.5){$\tau_3$};
	\node at (-0.5,0.25){$\tau_4$};
	\end{tikzpicture}
\end{multicols}
Denote by $B:=N^A(mC_{-y}(M^+_F\to M)_{|F'})$ (blue interval), $C=N^A(eu(\nu(F'\to M))_{F'})$ (yellow parallelogram) and $D=\det T^{1/2}_{F',>0}=0$ (red point). Denote the facets of polytope $C$ by $\tau_1,\tau_2,\tau_3, \tau_4$ according to the picture.

Theorem \ref{tw:mC} implies that the blue interval $B$ is contained in the yellow polytope $C$ and the red point $D$ \red{does not} belong to the interval $B$.
It is enough to prove that for \red{a} sufficiently big integer $n$
and every facet $\tau$
$$B+\frac{\vv}{n}\subset E_{H_{\tau}}.$$
Let's compute
half-planes
and subtori associated with the facets:
$$\begin{array}{|c|c|c|c|c|c|} \hline
	\text{facet} & E_{H_\tau} & \tilde{H}_\tau &\sigma_H\subset A & \pi_{H_\tau} & \text{character {\bf t}} \\
	\hline
	\tau_1 & \{x\alpha+y\beta|x\le 0\}&\red{\lin}(\beta)&(t,0)&x\alpha+y\beta \to -x&(t,0)\to \frac{1}{t}\\
	\tau_2 & \{x\alpha+y\beta|x+y\le 1\}&\red{\lin}(\alpha-\beta)&(t,t)&x\alpha+y\beta \to -x-y&(t,t)\to \frac{1}{t}\\
	\tau_3 & \{x\alpha+y\beta|x\ge -1\}&\red{\lin}(\beta)&(t,0)&x\alpha+y\beta \to x&(t,0)\to t\\
	\tau_4 & \{x\alpha+y\beta|x+y\ge 0\}&\red{\lin}(\alpha-\beta)&(t,t)&x\alpha+y\beta \to x+y&(t,t)\to t\\ \hline
\end{array}$$
\red{Where $\lin$ denotes the linear span.}
\red{A} choice
of splitting $A \simeq \sigma_H \times A/\sigma_H$ corresponds to a choice of integral character $\gamma \in \mathfrak{a}^*$
\red{whose}
restriction to the subtorus $\sigma_H$ is equal to {\bf t}. For $\tau_1$ and $\tau_3$ let's choose  $\gamma=\left(\frac{\alpha}{\beta^2}\right)^{\pm 1}.$ It induces a splitting of cohomology $K^A(pt)=\Z[\beta,\beta^{-1}][\frac{\alpha}{\beta^2},\frac{\beta^2}{\alpha}].$ Using this splitting we can compute limits
\begin{align*}
	&\tau_1: \ \lim_{\ttt \to 0}(1-y) \frac{\frac{\beta}{\alpha}}{1-\frac{\beta}{\alpha}}=
	(1-y)\lim_{\ttt \to 0} \frac{\frac{\beta^2}{\alpha}}{\beta-\frac{\beta^2}{\alpha}}=(1-y)\frac{0}{\beta}=0 \red{\,,} \\
	&\tau_3: \ \lim_{\ttt \to 0}(1-y) \frac{\frac{\beta}{\alpha}}{1-\frac{\beta}{\alpha}}=
	(1-y)\lim_{\ttt \to 0} \frac{\frac{1}{\beta}}{\frac{\alpha}{\beta^2}-\frac{1}{\beta}}=(1-y)\frac{\frac{1}{\beta}}{-\frac{1}{\beta}}=y-1 \red{\,.}
\end{align*}
	 For $\tau_2$ and $\tau_4$ let's choose $\gamma=\alpha^{\pm 1}$. It induces a splitting $K^A(pt)=\Z[\frac{\alpha}{\beta},\frac{\beta}{\alpha}][\alpha,\alpha^{-1}]$ (the character $\frac{\beta}{\alpha}$ is a basis of $\tilde{H}_\tau$) and
	\begin{align*}
	 &\tau_2,\tau_4: \lim_{\ttt \to 0}(1-y) \frac{\frac{\beta}{\alpha}}{1-\frac{\beta}{\alpha}}=
	 (1-y)\frac{\frac{\beta}{\alpha}}{1-\frac{\beta}{\alpha}}.
	\end{align*}
	 \red{These}
	 calculations imply\old{the fact} that  
	 $$B\cap\tau_i \neq \varnothing \iff i\in\{2,3,4\}.$$
	 We want to show that \red{the} addition of the vector $\vv$ preserves
	 half-plane
	 $E_{H_{\tau}}$ for these three facets by proving that
	$$\pi_{H_{\tau_i}}(\vv) \ge 0 \text{ for } i \in \{2,3,4\}. $$ 
	For $\tau_2,\tau_4$ the points $F',F$ \red{belong} to the same fixed \red{point} set
	component of the torus~$\sigma_{H}$. It implies that $\pi_{H}(\vv)=0$\red{. This} agrees with direct computation
	$$\pi_{H}(\vv)=\pi_H(\alpha-\beta)=\pm(1+(-1))=0.$$
	Moreover\red{,} for $\tau_3$ there is one dimensional $\sigma_H$-orbit $[0:x:y]$ from $F$ to $F'$. It implies that $\pi_{H_{\tau_3}}(\vv)>0$\red{. This} agrees with direct computation
	$$\pi_{H_{\tau_3}}(\vv)=\pi_{H_{\tau_3}}(\alpha-\beta)=1.$$
	To conclude, for every $n \in \N$ the interval $B+\frac{\vv}{n}$ is contained in the intersection of
	half-planes
	 $\bigcap_{i=2}^4E_{H_{\tau_i}}$. Moreover\red{,} it is also contained in $E_{H_{\tau_1}}$ for \red{a} sufficiently big integer~$n$.
	
\section{Appendix \red{A}: uniqueness of the stable envelopes} \label{s:Ok}
In \cite{OS,O2} the stable envelope was defined for \red{an} action of a reductive group $G$. In this appendix we show that for the group $G$ equal to a torus and a general enough slope our definition \ref{df:env} of the stable envelope coincides with\old{the} Okounkov's\old{one}. Moreover\red{,} we prove the uniqueness of stable envelopes for an arbitrary slope.

	\red{We use the notations and assumptions from the beginning of section \ref{s:env}.} According to \cite{OS,O2} the stable envelope is a map $$K^\T(X^A) \to K^\T(X)$$ given by a correspondence
	$$Stab \in K^\T(X^A\times X) \,,$$
	which \red{satisfies} three properties (cf. \cite{O2} paragraph 9.1.3).
	For a $\T$-variety $X$ and a finite set  $F$ \red{(with the trivial $\T$-action)} any map of $K^\T(pt)$ modules
	$$f: K^\T(F) \to K^\T(X)$$
	is determined by a correspondence $G \in K^\T(F\times X)$ such that for any $x\in F$
	$$G_{x\times X}=f(1_{x}).$$
	Below we denote  both morphism and correspondence by $Stab$. The main ingredient in\old{the} Okounkov's definition are attracting sets. For a one parameter subgroup $\sigma:\C^*\to A$ it is defined as (cf. \cite{OS} paragraph 2.1.3, \cite{O2} paragraph 9.1.2):
	$$Attr=\{(y,x) \in X^A \times X|\lim_{t\to 0}\sigma(t)x=y\}  \,. $$
	Moreover\red{,} for \red{a fixed point $F\in X^A$} we define
	$$Attr(F)=\{x|\lim_{t\to 0}x\in F\} \subset X \,.$$
	The straightforward comparison of definitions shows that the attracting sets coincide with the BB-cells
	$$Attr(F)=X_F^+ \,.$$
	\old{Moreover in the case of isolated fixed points we have
	$Attr=\bigsqcup_{F \in X^\T} F \times X_F^+ \,.$}	

{\bf Support condition:} (paragraph 2.1.1 from \cite{OS},
paragraph 9.1.3 point 1 from \cite{O2}
and theorem 3.3.4 point (i) of \cite{OM})
In\old{the} Okounkov's papers it is required that
$$\supp (Stab) \subset \bigsqcup_{F \in X^\red{A}}
\left(F\times \bigsqcup_{F'<F}Attr(F') \right) \,,$$
which means exactly that for any fixed point $F \in X^A$
$$\supp(Stab(1_F)) \subset 
\bigsqcup_{F'<F}Attr(F')\,.$$
The attracting sets coincide with the BB-cells, so this is \red{an} equivalent formulation
 of the axiom {\bf a)}.

{\bf Normalization condition:}  (paragraph 2.1.4 from \cite{OS}, paragraph 9.1.5 from \cite{O2})
 Consider any fixed point $F \in X^\red{A}$.
 In\old{the} Okounkov's papers it is required that the correspondence $Stab \in K^\T(X^\red{A}\times X)$
 inducing the stable envelope morphism \red{satisfies}
$$Stab_{|F\times F}=(-1)^{\rank T^{1/2}_{>0}}\left(\frac{\det \nu^-_F}{\det T^{1/2}_{F,\neq0}}\right)^{1/2} \otimes \mathcal{O}_{Attr}|_{F\times F} \red{\,.}$$
After substitutions
\begin{align*}
&\mathcal{O}_{Attr}|_{F\times F}=\mathcal{O}_{\diag F} \otimes eu(\nu_F^-) \red{\,,} \\
& \frac{\det \nu^-_F}{\det T^{1/2}_{F,\neq0}}= h^{\rank T^{1/2}_{>0}}\left(\frac{1}{\det T^{1/2}_{F,>0}}\right)^2
\end{align*}
\red{as} noted in  paragraph 2.1.4 of \cite{OS} we obtain
$$Stab_{|F\times F}=
eu(\nu^-_F)\frac{(-1)^{\rank T^{1/2}_{F,>0}}}{\det T^{1/2}_{F,>0}} \otimes h^{\frac{1}{2}\rank T^{1/2}_{F,>0}}\otimes \mathcal{O}_{\diag F}.$$
Changing correspondence to a morphism we get an equivalent condition
$$Stab(1_F)_{|F}=
eu(\nu^-_F)\frac{(-1)^{\rank T^{1/2}_{F,>0}}}{\det T^{1/2}_{F,>0}} h^{\frac{1}{2}\rank T^{1/2}_{F,>0}}\,,$$
which is exactly our axiom {\bf b)}.

{\bf Smallness condition:} (paragraph 2.1.6 from \cite{OS}, paragraph 9.1.9 from \cite{O2})
In the case of isolated fixed points\red{,} the last axiom of stable envelope from\old{the} Okounkov's papers states that for any pair of fixed points $F_1,F_2 \in X^A$ 
 $$N^A\left(Stab_{|F_1\times F_2}\otimes s_{|F_1}\right) \subseteq
 N^A\left(Stab_{|F_2\times F_2}\otimes s_{|F_2}\right).$$
 The support condition implies that this requirement is nontrivial only when $F_1 > F_2.$ Changing correspondence to a morphism we get an equivalent form
 $$
 N^A\left(Stab(1_{F_1})_{|F_2}\right) +s_{|F_1} \subseteq
 N^A\left(Stab(1_{F_2})_{|F_2}\right) + s_{|F_2}.
 $$
 \old{Replace $Stab(1_{F_2})_{|F_2}$ by its value determined by the normalization condition.}
 \red{The normalization axiom implies that
 $$N^A\left(Stab(1_{F_2})_{|F_2}\right)=N^A\left(eu(\nu^-_{F_2})\frac{(-1)^{\rank T^{1/2}_{{F_2},>0}}}{\det T^{1/2}_{{F_2},>0}} h^{\frac{1}{2}\rank T^{1/2}_{{F_2},>0}}\right) \,. $$}
 Note that the torus $A$ preserves the symplectic form $\omega$, thus multiplication by $h$ \red{does not} change Newton polytope $N^A$. Thus, we get an equivalent formulation
 $$N^A\left(Stab(1_{F_1})_{|F_2}\right)+s_{|F_1}
 \subseteq
 N^A\left(eu(\nu^-_{F_2})\right) -\det T^{1/2}_{F_2,>0} +s_{|F_2},$$
 which is very similar to the axiom {\bf c)}. The only difference is that we additionally require
 $$-\det T^{1/2}_{F_2,>0} +s_{|F_2} \notin N^A\left(Stab(1_{F_1})_{|F_2}\right)+s_{|F_1} .$$
 For a general enough slope this requirement automatically holds because \red{the point}
 $$-\det T^{1/2}_{F_2,>0} +s_{|F_2}-s_{|F_1}$$
 is a vertex of polytope which \red{is not} a lattice point.
 \red{The} addition of this assumption is \red{necessary} to acquire uniqueness of the stable envelopes for all slopes.

	\begin{ex} \label{ex:uni}
	Consider the variety $X=T^*\PP^1$ \red{equipped} with the action of the torus $\T=\C^*\times A$ where $A$ is the one dimensional torus acting on $\PP^1$ by
	$$\alpha[a:b]=[\alpha a:b]$$
	and $\C^*$ acts on the fibers by scalar multiplication. Denote by $\alpha$ and $y$ characters of $\T$ corresponding to projections to the tori $A$ and $\C^*$. The action of the torus $\T$ has two fixed points $\ee_1=[1:0]$ and $\ee_2=[0:1]$. 
	The variety $X$ satisfies \red{the} condition~$(\star)$ in a trivial way.
	
	Consider the stable envelope for the positive weight chamber (such that $\alpha$ is \old{a} positive), the tangent bundle $T\PP^1$ as polarization and the trivial line bundle \red{$\theta$} as a slope. 
	If we omit the point zero in the axiom {\bf c)} then both
	$$Stab^\red{\theta}(\ee_1)=1-O(-1), \ Stab^\red{\theta}(\ee_2)=\frac{1}{y}- \frac{O(-1)}{\alpha}$$
	and $$Stab^\red{\theta}(\ee_1)=1-O(-1), \ Stab^\red{\theta}(\ee_2)=\frac{O(-1)}{y}- \frac{O(-2)}{\alpha}$$
	\red{satisfy} the axioms of stable envelope.
\end{ex}

The rest of this appendix is devoted to the proof of uniqueness of the stable envelope (proposition \ref{pro:uniq}).
For \red{a} general enough slope it was proved in proposition 9.2.2 of \cite{O2}. For the sake of completeness\red{,} we present it with all necessary technical details omitted in the original.
The proof is a generalisation of the proof of uniqueness of cohomological envelopes (paragraph 3.3.4 in \cite{OM}).
We need the following lemma.
\begin{alemat} \label{lem:uniq}
	Choose a set of vectors $l_F \in \Hom(A,\C^*)\otimes \Q$ indexed by \red{the fixed point set $X^A$.}
	Suppose that an element $a \in K^\T(X)$ satisfies conditions
	\begin{enumerate}
		\item $\supp(a) \subset \bigsqcup_{F\in X^\red{A}} X^+_F$\red{,}
		\item		\red{for any fixed point $F\in X^A$ we have containment of the Newton polytopes}
			$$N^A(a_{|F}) \subseteq \left(N^A(eu(\nu^-_{F}))\setminus \{0\}\right)+l_F \,.$$	
	\end{enumerate}
	Then $a=0$.
\end{alemat}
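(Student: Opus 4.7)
My plan is induction on the cardinality of the fixed-point set $|X^A|$. The base case $|X^A|=0$ is immediate from condition (1). For the inductive step, choose $F_0\in X^A$ minimal under the BB-partial order. By theorem \ref{tw:BB}(6), $\overline{X^+_{F_0}}$ is a union of cells $X^+_{F'}$ with $F'<F_0$, of which minimality leaves none, so $X^+_{F_0}$ is already closed in $X$. The complement $X':=X\setminus X^+_{F_0}$ is open and smooth, inherits the symplectic form and condition $(\star)$, and has fixed point set $X^A\setminus\{F_0\}$. The restriction $a|_{X'}$ still satisfies (1) and (2) on $X'$, so the inductive hypothesis gives $a|_{X'}=0$, and the excision exact sequence of the closed inclusion $j\colon X^+_{F_0}\hookrightarrow X$ yields $a=j_*\tilde a_0$ with $\tilde a_0\in K^\T(X^+_{F_0})$ (smoothness of the cell, theorem \ref{tw:BB}(1), identifies $G^\T$ with $K^\T$).

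The crucial step is to invoke hypothesis (2) at $F_0$ to force $\tilde a_0|_{F_0}=0$. Since $T(X^+_{F_0})_{|F_0}=\nu^+_{F_0}$ by theorem \ref{tw:BB}(1), the normal bundle of $X^+_{F_0}$ in $X$ restricts to $\nu^-_{F_0}$ at $F_0$, so the self-intersection formula for the regular embedding $j$ gives
$$ a|_{F_0} = (\tilde a_0|_{F_0}) \cdot eu(\nu^-_{F_0}). $$
By remark \ref{rem:N} the vertex coefficients of $eu(\nu^-_{F_0})$ are non-zero-divisors, so proposition \ref{lem:New}(c) produces the Minkowski equality $N^A(a|_{F_0}) = N^A(\tilde a_0|_{F_0}) + N^A(eu(\nu^-_{F_0}))$ whenever $\tilde a_0|_{F_0}\neq 0$. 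Combined with (2) this becomes
$$ N^A(\tilde a_0|_{F_0}) + N^A(eu(\nu^-_{F_0})) \subseteq \bigl(l_{F_0}+N^A(eu(\nu^-_{F_0}))\bigr)\setminus\{l_{F_0}\}. $$
For any $p\in N^A(\tilde a_0|_{F_0})$, the translate $p+N^A(eu(\nu^-_{F_0}))$ sits inside $l_{F_0}+N^A(eu(\nu^-_{F_0}))$; since a bounded polytope is preserved under translation only by the zero vector, this forces $p=l_{F_0}$. Hence $N^A(\tilde a_0|_{F_0})=\{l_{F_0}\}$, but then $l_{F_0}\in N^A(a|_{F_0})$ contradicts the exclusion of $l_{F_0}$. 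Therefore $\tilde a_0|_{F_0}=0$.

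To finish, I upgrade $\tilde a_0|_{F_0}=0$ to $\tilde a_0=0$ using that the limit morphism from theorem \ref{tw:BB}(2) extends to a $\T$-equivariant map $\A^1\times X^+_{F_0}\to X^+_{F_0}$ collapsing $\{0\}\times X^+_{F_0}$ onto $F_0$, so equivariant $\A^1$-homotopy invariance makes the pullback $K^\T(X^+_{F_0})\to K^\T(F_0)$ an isomorphism. Then $\tilde a_0=0$ and $a=j_*\tilde a_0=0$, closing the induction. The main delicate point I foresee is this last identification: theorem \ref{tw:BB}(4) supplies it directly in the projective case, but in our setting of smooth quasiprojective $X$ with $(\star)$ one has to check by hand that the contraction extends across $\{0\}\subset\A^1$ and that the resulting deformation retraction suffices to apply $\A^1$-invariance for $K^\T$.
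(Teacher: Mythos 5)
Your proof is correct and takes essentially the same route as the paper: induction over the BB-poset, the self-intersection identity $a|_F=(\tilde a|_F)\cdot eu(\nu^-_F)$ coming from the regular embedding of the cell, and the observation that a bounded polytope cannot be translated into a proper subset of itself. The one structural difference is the direction of the induction: you peel off a \emph{minimal} cell, which is closed in $X$ by the remark following $(\star)$, and then restrict $a$ to the open complement $X'$; the paper instead peels off a \emph{maximal} cell of the current support set, which is open in that closed set, so it never changes the ambient $X$ and therefore never needs to re-verify the section~\ref{s:env} hypotheses for a new variety, a check that your variant does require (and that you do carry out). Using proposition~\ref{lem:New}(c) via remark~\ref{rem:N} rather than (b) is an inessential choice, since for an isolated $F$ the ring $K^{\T/A}(F)$ is a domain and both apply. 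Your concern about the final isomorphism $K^\T(X^+_{F_0})\simeq K^\T(F_0)$ is legitimate and in fact applies equally to the paper's wording, which simply asserts that $s_0$ is a section of an affine bundle while citing theorem~\ref{tw:BB}(4) only for projective $X$; in the case where the lemma is actually used, $X=T^*M$ with $M$ projective, one has $X^+_F=\nu^*(M^+_F\subset M)$, a $\T$-equivariant vector bundle over the affine cell $M^+_F$, so the isomorphism follows from the standard $\A^1$-invariance of $G^\T$ for equivariant vector bundles rather than needing a separate check that the $\sigma$-contraction extends algebraically across $0\in\A^1$.
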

\begin{proof}
	We proceed by induction on the partially ordered set $X^A$. 
	Suppose that the element $a$ is supported on the closed set $Y=\bigsqcup_{F\in Z} X^+_F$ for some subset $Z\subset X^\red{A}$. Choose a BB-cell $X^+_{F_1}$, corresponding to fixed point $F_1 \in X^A$, which is an open subvariety of $Y$. We aim to show that $a$ is supported on the closed subset $\bigsqcup_{F\in (Z-F_1)} X^+_F$. By induction it implies that $a=0$. \\
	Choose an open subset $U \subset X$ such that $U \cap Y =X^+_{F_1}$. Consider the diagram 
	$$
	\xymatrix{
		& U \ar[r]^i & X \\
		F_1 \ar[r]^{s_0} & X^+_{F_1} \ar[u]^{\tilde{j}} \ar[r]^{\tilde{i}} & Y \ar[u]^{j}
	}
	$$
	The square in the diagram is \red{a} pullback. The BB-cells are smooth\red{,} locally closed subvarieties, so the map $\tilde{j}$ is an inclusion of \red{a} smooth subvariety.
	There exist an element $\alpha \in G^\T(Y) $ such that $j_*(\alpha)=a$. 
	It follows that:
	$$ a_{|F_1}=(j_*\alpha)_{|F_1}=
	\red{s_0^*}\tilde{j}^*i^*j_*\alpha=
	\red{s_0^*}\tilde{j}^*\tilde{j}_*\tilde{i}^* \alpha = eu(\nu_{F_1}^-) \alpha_{|F_1},$$
	which implies
	\begin{align} \label{wyr:i1}
	N^A(eu(\nu_{F_1}^-) \alpha_{|F_1}) =N^A(a_{|F_1}) \subseteq
	\left(N^A(eu(\nu^-_{F_1}))\setminus \{0\}\right) +l_{F_1}.
	\end{align}
	Assume that $\alpha_{|F_1}$ is a nonzero element. Then the Newton polytope $N^A(\alpha_{|F_1})$ is nonempty. The ring $K^{\T/A}(F_1)$ is a domain so  proposition \ref{lem:New} (b) implies that
	\begin{align} \label{wyr:i2}
	N^A\left(eu(\nu^-_{F_1})\right) \subseteq
	N^A\left(eu(\nu_{F_1}^-)\right) +N^A(\alpha_{|F_1})=
	N^A\left(eu(\nu_{F_1}^-) \alpha_{|F_1}\right).
	\end{align}
	\old{In the case of non isolated fixed points one need to use proposition \ref{lem:New} (c) for the class $eu(\nu^-_{F_1})$ (see remark \ref{rem:N}).} The inclusions (\ref{wyr:i1}) and (\ref{wyr:i2}) imply that
	$$N^A\left(eu(\nu^-_{F_1})\right)\subseteq\left(N^A(eu(\nu^-_{F_1}))\setminus \{0\}\right) +l_{F_1} \,. $$
	But no polytope can be translated into a proper subset of itself. This contradiction proves that the element $\alpha_{|F_1}$ is equal to zero. The map $s_0$ is a section of an affine bundle 
	so it induces an isomorphism \red{of} the algebraic $K$-theory. It follows that $\alpha_{|X^+_{F_1}}=0$. Thus\red{,}
	the element $\alpha$ is supported on the closed set $\bigsqcup_{F\in (Z-F_1)} X^+_F$. It follows that $a$ is also supported on this set.  
\end{proof}
\begin{proof}[Proof of proposition \ref{pro:uniq}]
	Let $\{Stab(F)\}_{F\in X^A} $ and $\{\widetilde{Stab}(F)\}_ {F\in X^A}$ be two sets of elements satisfying the axioms of stable envelope. It is enough to show that for any \red{fixed point $F\in X^A$} the element $Stab(F)-\widetilde{Stab}(F)$
	satisfies conditions of lemma  \ref{lem:uniq} for  the set of vectors
	$$l_{F'}= s_{F'}-s_F-\det T^{1/2}_{F',>0} .$$
	The support condition follows from the axiom {\bf {a)}}.
	Let's focus on the second condition. The only nontrivial case  is $F'< F$. In the other cases the axioms {\bf {a)}} and {\bf {b)}} imply that
	$$Stab(F)_{\red{|F'}} -\widetilde{Stab}(F)_{\red{|F'}}=0.$$
	When $F'< F$ the axiom {\bf {c)}} implies that the Newton polytopes $N^\red{A}(Stab(F)_{\red{|F'}})$ and $N^\red{A}(\widetilde{Stab}(F)_{\red{|F'}})$ are contained in the convex set (cf. proposition \ref{lem:ver})
	$$\left(N^A(eu(\nu^-_{\red{F'}}))\setminus \{0\}\right)+l_\red{F'}.$$
	Thus
	\begin{align*}
	N^\red{A}\left(Stab(F)_{\red{|F'}}-\widetilde{Stab}(F)_{\red{|F'}}\right) \subseteq&
	conv\left(N^\red{A}(Stab(F)_{\red{|F'}}),N^\red{A}(\widetilde{Stab}(F)_{\red{|F'}})\right) \\
	\subseteq& \left(N^A(eu(\nu^-_{\red{F'}}))\setminus \{0\}\right)+l_\red{F'} \,.
	\end{align*}  
\end{proof}
\red{
\begin{rem}
	In this paper we always assume that the fixed point set $X^A$ is finite. In the case of nonisolated fixed points, our definition \ref{def:ele} is not equivalent to Okounkov's definition. For a component of the fixed point set $F\subset X^A$, the morphism
	$$K^\T(F)\to K^\T(X)$$
	 is not determined by its value on the element $1_F$. However, even in this case,  there is at most one element satisfying the axioms of definition \ref{def:ele}. The proofs of analogues of proposition \ref{pro:uniq} and lemma \ref{lem:uniq} are almost identical to those presented above. The only difference is that the ring $K^\T(F)$ may not be a domain. Thus, in the proof of lemma \ref{lem:uniq}, one needs to use proposition \ref{lem:New} (c) (for the class $eu(\nu^-_{F_1})$, see remark \ref{rem:N}) instead  of \ref{lem:New} (b).
\end{rem}
}
\section{Appendix \red{B}: The local product property of Schubert cells} \label{s:G/P}
Let $G$ be \red{a} reductive, complex Lie group
with chosen maximal torus $\T$ and Borel subgroup $B^+$. Any one dimensional subtorus $\sigma \subset \T$ induces a linear functional $$\varphi_\sigma:\mathfrak{t}^* \to \C.$$ For a general enough subtorus $\sigma$ 
we can assume that no roots belong to the kernel of this functional. Consider the Borel subgroups $B_\sigma^+$ such that the corresponding Lie algebra is \red{the} union of these weight spaces whose characters are positive with respect to $\varphi_\sigma$. Denote its unipotent subgroup by $U_\sigma^+$. Analogously one can define groups $B_\sigma^-$ and $U_\sigma^-$.

For a parabolic group $B^+ \subset P \subset G$ consider the BB-decomposition of the variety $G/P$ with respect to the torus $\sigma$.
It is a classical fact that
the positive (respectively negative) BB-cells
are the orbits of  group $B_\sigma^+$ (respectively $B_\sigma^-$).
We prove that the stratification of $G/P$ by BB-cells of the torus $\sigma$ behaves like a product in a neighbourhood of a fixed point of the torus $\T$ (see definition \ref{df:prd}).
\begin{atw}\label{tw:prod}
	Consider the situation described above.
Any fixed point $x \in(G/P)^\T$ has an open neighbourhood $U$ such that:
	\begin{enumerate}
		\item There exist a $\T$-equivariant isomorphism $$\theta: U \simeq \left(U\cap (G/P)_x^+\right)\times \left(U\cap (G/P)_x^-\right)  $$
		\item For any fixed point $y \in(G/P)^\T$ the isomorphism $\theta$ induces isomorphism:
		$$U\cap (G/P)_y^+ \simeq \left(U\cap (G/P)_x^+\right)\times \left(U\cap (G/P)_x^- \cap (G/P)_y^+\right)$$
	\end{enumerate}	
\end{atw}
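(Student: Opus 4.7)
My plan is to exhibit $U$ as the ``big cell'' at $x$ and read off the product decomposition from the root subgroup structure of $G$. Let $P_x = \mathrm{Stab}_G(x)$ (a $\T$-stable parabolic) and let $P_x^{-}$ denote the opposite parabolic, with unipotent radical $U_{P_x^-}$. The orbit map $U_{P_x^-}\to G/P$, $u\mapsto u\cdot x$, is a $\T$-equivariant open immersion; take $U$ to be its image. The root system $\Delta_x$ of $U_{P_x^-}$ consists of the roots $\alpha$ with $\mathfrak{g}_\alpha\not\subset\mathfrak{p}_x$; split it as $\Delta_x = \Delta_x^{+}\sqcup \Delta_x^{-}$ according to the sign of $\varphi_\sigma$. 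Each half is closed under root addition, so the subgroups $N^{\pm}:=\prod_{\alpha\in\Delta_x^{\pm}}U_\alpha$ are well-defined, and by the standard ordered-product description of a unipotent group (ordering $\Delta_x^{+}$ before $\Delta_x^{-}$) the multiplication $N^{+}\times N^{-}\to U_{P_x^-}$ is a $\T$-equivariant isomorphism of varieties. Inverting it and composing with the open immersion $U_{P_x^-}\xrightarrow{\sim}U$ yields $\theta:U\xrightarrow{\sim}N^{+}\times N^{-}$.

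Part (1) is then immediate: $\sigma$ acts with strictly positive weights on $N^{+}$ and strictly negative ones on $N^{-}$, so the attracting cell of $(e,e)$ in $N^{+}\times N^{-}$ is $N^{+}\times\{e\}$, which under $\theta^{-1}$ corresponds to $U\cap(G/P)^{+}_x$; analogously $\theta^{-1}(\{e\}\times N^{-})=U\cap(G/P)^{-}_x$.

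For part (2), the key input is the following computation. For $p\in U$ with $\theta(p)=(n^{+},n^{-})$, i.e.\ $p=n^{+}n^{-}x$, and for $t\in\C^{*}$, the fact that $\sigma(t)$ fixes $x$ gives
\[
\sigma(t)\cdot p \;=\; \bigl(\sigma(t)n^{+}\sigma(t)^{-1}\bigr)\cdot \sigma(t)(n^{-}x).
\]
The first factor tends to $e$ as $t\to 0$ because $n^{+}$ has only positive $\sigma$-weights, so $\lim_{t\to 0}\sigma(t)p = \lim_{t\to 0}\sigma(t)(n^{-}x)$. Hence $p\in(G/P)^{+}_y$ iff $n^{-}x\in(G/P)^{+}_y$. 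Setting $M_y:=\{n^{-}\in N^{-}:n^{-}x\in(G/P)^{+}_y\}$, this gives $\theta\bigl(U\cap(G/P)^{+}_y\bigr)=N^{+}\times M_y$, while the description of $(G/P)^{-}_x\cap U$ from part (1) shows $\theta\bigl(U\cap(G/P)^{-}_x\cap(G/P)^{+}_y\bigr)=\{e\}\times M_y$, which is exactly the factorisation required.

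The part demanding the most care is the ordered-product factorisation $U_{P_x^-}\simeq N^{+}\times N^{-}$ together with its $\T$-equivariance, since this is what transports the $\sigma$-weight decomposition of $T_x(G/P)$ to a global product structure on $U$; once it is in place, the behaviour of every other Schubert cell in $U$ follows painlessly from the limit computation above.
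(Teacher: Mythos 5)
Your argument is correct and follows the same broad strategy as the paper (work inside the big open cell $U_{P_x^-}\cdot x$ and factor the unipotent group into its $\sigma$-attracting and $\sigma$-repelling pieces), but you differ from the paper in how you carry out the two key technical steps, and in both cases your route is arguably cleaner. For the factorisation $N^{+}\times N^{-}\simeq U_{P_x^-}$: the paper defines $\theta$ as the multiplication map, checks injectivity on points using that $X_{+}$ and $X_{-}$ are subgroups with trivial intersection, and then invokes the Ax--Grothendieck theorem (injective endomorphism of affine space is bijective) followed by Zariski's main theorem to conclude it is an isomorphism of varieties; you instead appeal to the classical ordered-product parametrisation of a unipotent group by its root subgroups, ordering the roots of $\Delta_x^{+}$ before those of $\Delta_x^{-}$ --- this is the more elementary and standard route (it is also what the paper's own remark suggests as an alternative), and $\T$-equivariance is immediate because each $U_\alpha$ is $\T$-stable and the $\T$-action is by conjugation. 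For part (2): the paper proves only the containment $\theta\bigl(X_{+}\times(X_{-}\cap(G/P)_y^{+})\bigr)\subset(G/P)_y^{+}$, by observing that $X_{+}\subset B_\sigma^{+}$ and $(G/P)_y^{+}$ is a $B_\sigma^{+}$-orbit, and then lets the partition of $U$ by the cells $(G/P)_y^{+}$ upgrade the containment to an equality; your limit computation $\lim_{t\to 0}\sigma(t)(n^{+}n^{-}x)=\lim_{t\to 0}\sigma(t)(n^{-}x)$ gives the equivalence $p\in(G/P)_y^{+}\Leftrightarrow n^{-}x\in(G/P)_y^{+}$ directly, so the set equality follows without invoking the partition, and without reference to the Borel-orbit description of Schubert cells. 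The only point I would tighten is the appeal to the ``standard ordered-product description'': it deserves a precise citation (e.g.\ Humphreys, \emph{Linear Algebraic Groups}, Prop.~28.1, applied inside the unipotent radical of $B_\sigma^{-}$ which contains $U_{P_x^-}$) and you should note explicitly that $\Delta_x$ itself is closed under addition (it is the root set of the nilradical of the opposite parabolic), so that $\Delta_x^{\pm}$ are closed and $N^{\pm}$ are in fact subgroups. With that citation in place the proof is complete.
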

In the course of proof we use the following interpretation of classical notions of the theory of Lie groups in the language of BB-decomposition. Note that we consider BB-cells in smooth quasi-projective varieties.
\begin{alemat} \label{lem:BorABB}
	Consider \red{the} action
	of the torus $\sigma$ on the group $G$ defined by conjugation. Denote by $F$ the component of the fixed \red{point} set
	which contains the identity. For a subset $Y\subset F$ we use abbreviations
	$$G^+_Y=\{x\in G| \lim_{t\to 0}x\in Y\} \text{ and } G^-_Y=\{x\in G| \lim_{t\to \infty}x\in Y\}$$
	for the fibers of projections $G^+_F\to F$ and $G^-_F\to F$ over $Y$.
	\begin{enumerate}
		\item The Borel subgroup $B_\sigma^+$ (respectively $B_\sigma^-$) is the positive (respectively negative) BB-cell of the maximal torus $\T$ i.e.
		$$B_\sigma^+=G^+_\T\,.$$
		\item The unipotent subgroup $U_\sigma^+$ (respectively $U_\sigma^-$) is the positive (respectively negative) BB-cell of the identity element i.e.
		$$U_\sigma^+=G^+_{id}\,.$$
	\end{enumerate} 
\end{alemat}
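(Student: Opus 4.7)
The plan is to prove (1) and (2) in parallel, with the negative statements following by the symmetry $t\mapsto t^{-1}$. I first identify the fixed component $F$: the fixed locus of the conjugation action of $\sigma$ on $G$ is $Z_G(\sigma)$, and the genericity hypothesis on $\sigma$ forces $\varphi_\sigma$ to be nonzero on every root, so the zero-weight subspace of the adjoint $\sigma$-representation on $\mathfrak{g}$ equals $\mathfrak{t}$. Hence $T_eZ_G(\sigma) = \mathfrak{t}$ and $\T$ exhausts the identity component of $Z_G(\sigma)$, giving $F = \T$.

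For the forward containments I use the Levi decomposition $B_\sigma^+ = U_\sigma^+\rtimes \T$: every $g\in B_\sigma^+$ factors uniquely as $g=uh$ with $u\in U_\sigma^+$ and $h\in\T$, and since $\T$ is $\sigma$-fixed,
$$\sigma(t)\,g\,\sigma(t)^{-1} = \bigl(\sigma(t)\,u\,\sigma(t)^{-1}\bigr)\cdot h.$$
Decomposing $U_\sigma^+$ into the root subgroups $U_\alpha\cong\C$ for $\alpha$ with $\varphi_\sigma(\alpha)>0$, the $\sigma$-action on each $U_\alpha$ is scalar multiplication by $t^{\varphi_\sigma(\alpha)}$ with strictly positive exponent, so the $u$-factor tends to $e$ and the whole conjugate tends to $h\in\T$. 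This yields $B_\sigma^+\subseteq G^+_\T$ with the $\lim_{t\to 0}$ map equal to projection to the Levi factor, and specialising to $h=e$ also yields $U_\sigma^+\subseteq G^+_{id}$.

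For the reverse containment in (1), I project via $\pi\colon G\to G/B_\sigma^+$. Since $\sigma(t)\in B_\sigma^+$, the identity $\pi\bigl(\sigma(t)g\sigma(t)^{-1}\bigr) = \sigma(t)gB_\sigma^+ = \sigma(t)\cdot\pi(g)$ shows that $\pi$ intertwines conjugation on $G$ with left translation on $G/B_\sigma^+$. If $g\in G^+_\T$, the limit of $\sigma(t)g\sigma(t)^{-1}$ lies in $\T$ and projects to $eB_\sigma^+$; by continuity, $\pi(g)$ sits in the positive BB-cell of $eB_\sigma^+$ for the left $\sigma$-action on $G/B_\sigma^+$. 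The classical fact recalled in the excerpt identifies this cell with the $B_\sigma^+$-orbit of $eB_\sigma^+$, which collapses to the singleton $\{eB_\sigma^+\}$. Hence $\pi(g)=eB_\sigma^+$, so $g\in B_\sigma^+$, completing (1). Part (2) is then immediate: any $g\in G^+_{id}$ lies in $G^+_\T=B_\sigma^+$ by (1), and the forward limit formula forces the Levi factor $h$ to be $e$, hence $g=u\in U_\sigma^+$. The statements for $B_\sigma^-$ and $U_\sigma^-$ follow by repeating the argument with $t\to\infty$ in place of $t\to 0$.

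The main obstacle is the reverse containment $G^+_\T\subseteq B_\sigma^+$; the identification of $F$ and the forward containments are essentially formal from the weight structure of $B_\sigma^+$. The key simplification is that the positive BB-cell of the identity coset in $G/B_\sigma^+$ is a single point, so once the projection $\pi$ is set up equivariantly, the classical Schubert description on the quotient carries the whole argument and one avoids having to analyze the Bruhat/Birkhoff strata of $G$ itself.
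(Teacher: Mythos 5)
Your proof is correct, but it takes a genuinely different route from the paper. The paper's argument shows directly that $G^+_\T$ is a connected algebraic subgroup of $G$ — it is a subgroup because $\T$ is a group and the limit map is multiplicative, and it is connected because $\T$ is connected — and then identifies its Lie algebra with $\mathrm{Lie}(B_\sigma^+)$ by linearizing the BB-cell at the identity via the exponential map, so equality follows from the standard fact that connected algebraic subgroups are determined by their Lie algebras. Your argument instead gets the reverse inclusion $G^+_\T\subseteq B_\sigma^+$ by pushing down along the equivariant quotient $G\to G/B_\sigma^+$, which intertwines conjugation with left translation, and then invoking the classical identification of BB-cells of $G/B_\sigma^+$ with Schubert cells; the positive cell of the base point is a single point, so the BB-limit condition in the quotient pins $\pi(g)$ to $eB_\sigma^+$. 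This is a clean geometric shortcut, and it handles part (2) neatly as a corollary of (1) using the Levi factorization. The trade-off is that your proof imports the Schubert-cell description of $(G/B_\sigma^+)^+_{eB}$ — a result the paper cites but does not prove — whereas the paper's version stays internal to the group, at the cost of the tangent-space computation and the implicit appeal to the local bundle structure of $G^+_\T\to\T$ for connectedness. One small but worthwhile addition in your write-up is the explicit verification that the zero-weight component $F$ equals $\T$ under the genericity hypothesis on $\sigma$, a point the paper uses silently.
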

\begin{proof}
	We prove only the first case for the positive Borel subgroup. \red{The} other cases
	are analogous.
	It is enough to show that $G^+_\T$ is a connected subgroup of $G$ whose Lie algebra coincides with the Lie algebra of $B_\sigma^+$.
	
	The variety $G^+_\T$ is a subgroup because the maximal torus $\T$ is a group and \red{the} limit \red{map}  preserves multiplication. Namely for $g,h\in G^+_\T$ such that
	$\lim_{t\to 0} g=a \in \T$ and $\lim_{t\to 0} h=b \in \T$
	it is true that
	$$\lim_{t\to 0} g^{-1}h=a^{-1}b \in \T. $$
	The variety $G^+_\T$ is connected because the maximal torus $\T$ is connected.
	So it is enough to compute the tangent space to $G^+_\T$ at identity. The exponent map is an isomorphism in some neighbourhood of zero so we can limit ourselves to computation in the Lie algebra $\mathfrak{g}$.
	The action of \red{the} torus $\sigma$ on $\mathfrak{g}$ is given by differentiation of the action on $G$.
	The tangent space $T_0G^+_\T$ is equal to the BB-cell $\mathfrak{g}^+_\mathfrak{t}$.
	Consider \red{the} weight decomposition $$\mathfrak{g}=\bigoplus_{h\in \mathfrak{t}^*} V_h.$$
	Differentiation of the action of $\sigma$ on $\mathfrak{g}$ is equal to the Lie bracket so
	$$\mathfrak{g}_\mathfrak{t}^+=\bigoplus_{h\in \mathfrak{t}^*, \varphi_\sigma(h) \ge 0} V_h. $$
	\red{This}
	is exactly the tangent space to the Borel subgroup $B_\sigma^+$. 
\end{proof}
\begin{proof}[Proof of the theorem \ref{tw:prod}]
	Note that the Weyl group acts transitively on \red{the fixed point set $(G/P)^\T$.}
	Thus\red{,} replacing the torus $\sigma$ by its conjugate by a Weyl group element we may assume that a fixed point $x$ is equal to the class of identity.
	
	Denote the Lie algebra  of the parabolic subgroup $P$ by $\mathfrak{p} \subset \mathfrak{g}$. Denote by $\mathfrak{u}_P$ the Lie subalgebra consisting of the root spaces which \red{do not} belong to $\mathfrak{p}$. Let $U_P$ be \red{the}
	corresponding Lie group.
	 The group $U_P$ is unipotent (as a subgroup of the unipotent group $U^-$). Consider the action of \red{the} torus
	 $\T$ on $U_P$ by conjugation. Let's note two facts from the theory of Lie groups.
	\begin{enumerate}
		\item $U_P$ is isomorphic to its complex Lie algebra as a complex $\T$-variety (cf. paragraph 15.3b from \cite{Bor}, or paragraph 8.0 from \cite{Unip}). \label{1}
		\item The quotient map $G \to G/P$ induces $\T$-equivariant isomorphism from $U_P$ to some open neighbourhood of identity.
	\end{enumerate}
	Choose $U_P$ as a neighbourhood $U$ of identity.
	The second observation and \red{the} second point of lemma \ref{lem:BorABB} \red{imply} that:
	$$ X_+:=U_P \cap (G/P)_{id}^+ \simeq U_P \cap G_{id}^+ \simeq U_P \cap U_\sigma^+ \red{\,,}$$
	analogously
	$$ X_-:=U_P \cap (G/P)_{id}^- \simeq U_P \cap U_\sigma^-.$$
	Both \red{isomorphisms} are given by the quotient morphism $G \to G/P$.  We define morphism 
	$$\theta: X_+ \times X_- \to U_P $$
	as multiplication in $U_P$. We aim to prove that this is an isomorphism. We start by showing injectivity on points. Both varieties $X_+$ and $X_-$ are subgroups of $U_P$. So to prove injectivity it is enough to show that $X_+ \cap X_- =\{id\}$. But $X_+$ is contained in the positive unipotent group and $X_-$ in the negative unipotent group, so their intersection must be trivial. 
	
	As a variety $U_P$ is isomorphic to an affine space - its Lie algebra $\mathfrak{u}_P$. The induced action of $\T$ on the linear space $\mathfrak{u}_P$ is linear - it is \red{a} part of the adjoint representation of $G$.
	It follows that both  $X_+$ and $X_-$ are BB-cells of \red{a} linear action on a linear space and therefore linear subspaces. Thus\red{,} the product $X_+\times X_-$ is isomorphic to \red{an} affine space of dimension equal to dimension of $U_P$.
	Thus\red{,} the map $\theta$ is an algebraic endomorphism of an affine space which is injective on points. The Ax–Grothendieck theorem (cf. \cite[Theorem 10.4.11.]{Ax,Gro})
	implies that it is bijective on points.
	Affine space is smooth and connected so the Zariski main theorem (cf. \cite[Theorem 4.4.3]{EGA3.1})
	 implies that $\theta$ is an algebraic isomorphism.
	 
	 To prove the second property it is enough to show the containment
	 $$\theta\left(X_+ \times (U_P\cap (G/P)_y^+)\right) \subset (G/P)_y^+,$$
	 for any fixed point $y$. Note that
	 $$X_+ \subset U_\sigma^+\subset B_\sigma^+.$$
	 Moreover\red{,} the BB-cell $(G/P)_y^+$  is an orbit of the group $B_\sigma^+$ and the morphism $\theta$ coincides with the action of $B_\sigma^+$. So the desired inclusion holds.  
\end{proof}
\begin{rem}
	One can show alternatively that $X_-$ and $X_+$ are isomorphic to affine spaces using theorem 1.5 of \cite{JeSi}. It is also possible to omit the Ax-Grothendieck theorem by using classical results of the theory of Lie groups. Namely lemma 17 from \cite{Stei} implies that the map $\theta$ is bijective.
\end{rem}

\newcommand{\etalchar}[1]{$^{#1}$}

\end{document}